\tikzset{individu/.style={draw,thick}}
\theoremstyle{plain}
\newtheorem{theorem}{Theorem}[section]
\newtheorem{corollary}[theorem]{Corollary}
\newtheorem{lemma}[theorem]{Lemma}
\newtheorem{proposition}[theorem]{Proposition}
\theoremstyle{definition}
\newtheorem{definition}[theorem]{Definition}
\theoremstyle{remark}
\newtheorem{remark}[theorem]{Remark}
\newtheorem{example}[theorem]{Example}
\numberwithin{equation}{section}
\newcommand{\N}{\mathbb{N}}
\newcommand{\Z}{\mathbb{Z}}
\newcommand{\R}{\mathbb{R}}
\newcommand{\ind}[1]{\mathbf{1}_{\left\{#1\right\}}}
\newcommand{\indset}[1]{\mathbf{1}_{#1}}
\newcommand{\floor}[1]{{\left\lfloor #1 \right\rfloor}}
\newcommand{\crochet}[1]{{\langle #1 \rangle}}
\renewcommand{\bar}[1]{\overline{#1}}
\renewcommand{\tilde}[1]{\widetilde{#1}}
\renewcommand{\hat}[1]{\widehat{#1}}
\newcommand{\e}{\mathrm{e}}
\newcommand{\dd}{\mathrm{d}}
\DeclareMathOperator{\E}{\mathbb{E}}
\renewcommand{\P}{\mathbb{P}}
\renewcommand{\rho}{\varrho}
\renewcommand{\epsilon}{\varepsilon}
\title{Reinforced Galton--Watson processes III:\\ Empirical offspring distributions}
\author{Jean Bertoin\thanks{Institute of Mathematics, University of Zurich, Switzerland.} \and Bastien Mallein\thanks{Institut de Math\'ematiques de Toulouse,  Universit\'e de Toulouse, France.}}
\date{}
\begin{document}

\maketitle

\begin{abstract}
Reinforced Galton--Watson processes describe the dynamics of a population where reproduction events are reinforced, in the sense that offspring numbers of forebears can be repeated randomly by descendants. More specifically, the evolution depends on the empirical offspring distribution of each individual  along its ancestral lineage. We are interested here in asymptotic properties of the empirical distributions observed in the population, such as concentration, evanescence and persistence. For this, we incorporate tools from the theory of large deviations to our preceding analysis \cite{BM1, BM2}.
 \end{abstract}

\noindent \emph{\textbf{Keywords:}} Reinforced Galton--Watson process, empirical offspring distributions, large deviations.

\medskip

\noindent \emph{\textbf{AMS subject classifications:}}  60J80; 60J85

\section{Introduction}
\label{sec:introduction}

Given a locally finite rooted tree, which we think of as encoding the genealogical structure of a population, we look at out-degree sequences along branches from the root. For any vertex $v$ different from the root, this defines the \textit{empirical offspring distribution} at $v$ by
\begin{equation}
  \label{E:eod}
  \boldsymbol{\mu}_v\coloneqq \frac{1}{|v|} \sum_{i=0}^{|v|-1} \boldsymbol{\delta}_{d(v_i)},
\end{equation}
where $|v|\geq 1$ denotes the generation of $v$, $d(v_i)$ the number of children begotten by the forebear $v_i$ of $v$ at generation $i<|v|$, and,  as usual, $\boldsymbol{\delta}$ designates a Dirac measure.

The classical  Galton--Watson tree is an elementary  population model in which individuals reproduce independently and with a fixed reproduction law.
An easy combination of the well-known many-to-one formula for Galton--Watson trees and Sanov's theorem entails  that for any small enough neighborhood $G$ of some probability measure $\boldsymbol{\rho}$  on $\N=\{1,2, \ldots\}$, the mean number of vertices $v$ at generation $n$ with $\boldsymbol{\mu}_v\in G$ goes to $0$ or to $\infty$ exponentially fast  as $n\to \infty$, depending on whether the entropy of $\boldsymbol{\rho}$  relative to $\boldsymbol{\nu}$,
 \begin{equation}
  \label{E:entrop}
  H(\boldsymbol{\rho}| \boldsymbol{\nu})\coloneqq \sum \rho(k) \log\left(\frac{\rho(k)}{\nu(k)}\right),
\end{equation}
 is larger or smaller than
\begin{equation}
  \label{E:rhoell}
  \langle \boldsymbol{\rho}, \ln \rangle \coloneqq \sum \rho(k) \ln k.
\end{equation}
Here and throughout the rest of this text, the notation $\sum$ is used for summation over $k$, and we denote by $\ln$ the restriction of the natural logarithm $\log $ to $\N$.

In the first case, $\boldsymbol{\rho}$ is almost-surely evanescent for the empirical offspring distributions of the Galton--Watson tree, in the sense that with probability one, we can find a neighborhood $G$ of $\boldsymbol{\rho}$ such that the number of vertices $v$ with $\boldsymbol{\mu}_v\in G$ is finite. In the second case, $\boldsymbol{\rho}$ is persistent with positive probability, as a.s. on the event of survival of the tree, for any neighbourhood $G$ of $\boldsymbol{\rho}$, there will be infinitely many vertices $v$ such that $\boldsymbol{\mu}_v \in G$. Precise stronger statements will be given and proven in Section \ref{S:GWT}; they should be viewed as benchmarks for the present study.

\textit{Reinforced} Galton--Watson processes were introduced in \cite{BM1} by incorporating random repetitions of reproduction events in the  evolution of usual Galton--Watson processes. This depends on a memory parameter $q\in(0,1)$, which accounts for the frequency of repetitions.  For every $n\geq 1$, conditionally given the offspring numbers of individuals up to generation $n-1$, individuals at generation $n$ reproduce independently one from the others and as follows. With probability $q$, the number of children of $v$ is sampled from the empirical offspring distribution $\boldsymbol{\mu}_v$, and with complementary probability $1-q$, it is rather sampled from the reproduction law $\boldsymbol{\nu}$. In words, for every individual, with the obvious exception of the ancestor,  we  pick a forebear uniformly at random on its ancestral lineage. Then, either with probability $q$,  this individual begets the same number of children as the selected forebear, or with complementary probability $1-q$,  the number of its children  is  an independent sample from the reproduction law~$\boldsymbol{\nu}$.

{The growth of the averaged population size of reinforced Galton--Watson processes has been determined in \cite{BM1} using generating function techniques, and then the survival and a.s. growth rate have been further studied in \cite{BM2} introducing martingale techniques. Here, we incorporate tools from large deviations theory to our analysis and investigate asymptotic properties of empirical offspring distributions of individuals at large generations. Our main result, Theorem \ref{Tref}, states an exponential concentration around a specific reproduction law, and provides a sufficient condition for a given reproduction law to be evanescent or persistent; see Definition \ref{D1} below for a precise definition.}

The rest of the present work is organized as follow. Section~\ref{S:preli} is devoted to a few preliminaries on trees, basic background from large deviations theory, and the presentation of benchmark results about empirical offspring distributions in classical Galton-Watson processes. Section~\ref{S:Sanovreinforced} deals with versions of Sanov's Large Deviations Principle for reinforced sampling; it  mainly stems from earlier contributions by Budhiraja and Waterbury~\cite{BW} and ourselves~\cite{BM1}.
Our main results are presented and proven in Section \ref{S:Main}. {Last, in Section~\ref{S:survival}, we also point at a sufficient condition for survival of reinforced Galton--Watson processes, which generalizes than that obtained previously in \cite{BM2} and follows directly from our analysis.}

\section{Preliminaries}
\label{S:preli}

In this section, we first introduce some definition and properties related to the study of genealogical trees. In particular, we define the notions of \emph{evanescence} and \emph{persistence} for empirical offspring distributions. We then recall in Section~\ref{S:LDT} the Large Deviations Principles on finite alphabets, before applying it to the Galton-Watson tree in Section~\ref{S:GWT}.

\subsection{Some definition and a many-to-one formula}
\label{S:not}
We fix a probability measure $\boldsymbol{\nu}$ on  $\Z_+=\{0,1, \ldots\}$ with  support
$$S=\{k\in \Z_+:\nu(k)>0 \}.$$
We assume throughout this work that $S$ is finite\footnote{This restriction comes from our previous work \cite{BM1,BM2}, in which we observed that a reinforced Galton-Watson process with a reproduction law of unbounded support would grow at a super-exponential rate, thus would require very different methods to study.}, $\# S<\infty$;  the uninteresting case when $S\cap \N$ is a singleton is further ruled out. Recall that the restriction of the logarithmic function to $S$ is denoted here by $\ln$\footnote{ We found convenient to write $\ln k$ for the logarithm of a positive integer $k$ and $\log x$ when the argument is a positive real number $x$, even though of course the first function is merely the restriction of the second to $\Z_+$. That way, we treat $\ln$ as a vector in $\R^S$.}; it will play an important role for this study. This requires a few obvious conventions when $0\in S$, namely
\begin{equation} \label{E:conv}\ln 0 = -\infty\ ,\ \e^{-\infty}=0\ ,\ 0\ln 0=0.
\end{equation}

We write $\mathcal P_{S}\subset \R^{S}$ for the simplex of probability measures $\boldsymbol{\rho}$ on $S$; the support of such $\boldsymbol{\rho}$ may of course be a strict subset of $S$. We use the notation
$$\langle  \boldsymbol{\rho}, \lambda\rangle = \sum \rho(k) \lambda(k), \qquad \lambda\in [-\infty, \infty)^S,$$
where, by an earlier convention,  the sum runs over $k\in S$
(in particular, observe that $\langle  \boldsymbol{\rho}, \ln\rangle=-\infty$ iff $\rho(0)>0$).

Next, consider any rooted tree, say $T$, such that out-degrees of  vertices of $T$ always belong to $S$.  It is convenient to use implicitly the Ulam-Harris-Neveu framework\footnote{ In short, this amounts to enumerating progenies in $T$ and agreeing that the name of a child is obtained by aggregating its rank in the progeny at the end of the name of its parent; see e.g. \cite{Nev}.}, which enables us to  view $T$ as a subset of the Ulam tree $\bigcup_{n\geq 0} \N^n$.
We stress that $T$ is fixed (deterministic) in this section; however  the same notation $T$ will rather refer to a random tree from Section~\ref{S:GWT} onwards. Recall that empirical offspring distributions $\boldsymbol{\mu}_v$ of vertices of $T$ have been introduced in \eqref{E:eod}, so that $\boldsymbol{\mu}_v\in \mathcal P_{S}$. We now give a key definition for the present work.

\begin{definition} \label{D1} We say that $\boldsymbol{\rho}\in \mathcal P_{S}$ is:
\begin{enumerate}
\item[(i)] \textbf{evanescent} if there exists some neighborhood $G$ of $\boldsymbol{\rho}$ in $ \mathcal P_{S}$ such that
$$\#\{v\in T: \boldsymbol{\mu}_v\in G\}< \infty,$$

\item[(ii)] \textbf{weakly persistent} if $\boldsymbol{\rho}$ is not evanescent, that is, if
$$\#\{v\in T: \boldsymbol{\mu}_v\in G\}=\infty$$
 for any neighborhood $G$ of $\boldsymbol{\rho}$ in $ \mathcal P_{S}$,

\item[(iii)] \textbf{strongly persistent} if there exists an infinite line of descent $(v_n)_{n\geq 1}$ in $T$ with
$$\lim_{n\to \infty} \boldsymbol{\mu}_{v_n}=\boldsymbol{\rho}.$$
\end{enumerate}
\end{definition}
\begin{remark} \begin{enumerate}
\item The definition is only relevant for infinite trees, since by a compactness argument, $T$ is finite if and only if all probability laws on $S$ are evanescent.
\item Plainly, a law $\boldsymbol{\rho}$ with $\rho(0)>0$ cannot arise as an  empirical offspring distribution, and thus is necessarily evanescent.
\end{enumerate}
\end{remark}

\begin{example}
\label{expl}
We determine the persistent and evanescent laws for the tree $T$ constructed as follows: starting with a binary tree, we graft to each node of that tree an infinite line. In this tree, each individual has either 1 or 3 children, depending on whether they belong to the original binary tree or were later added. This tree can be written with Ulam-Harris notation as:
\[
  T = \left\{ u \in \cup_{n \in \Z_+} \{1,2,3\}^n : \text{ if } u(i) = 3 \text{ then } u(j) = 1 \text{ for all } i+1 \leq j \leq |u| \right\}.
\]
The strongly persistent measures are $\boldsymbol{\delta}_1$ and $\boldsymbol{\delta}_3$, whereas $t \boldsymbol{\delta}_1 + (1-t) \boldsymbol{\delta}_3$ is a weakly persistent measure for all $t \in (0,1)$. There are no evanescent law in $\mathcal{P}_{\{1,3\}}$ in that example.
\end{example}

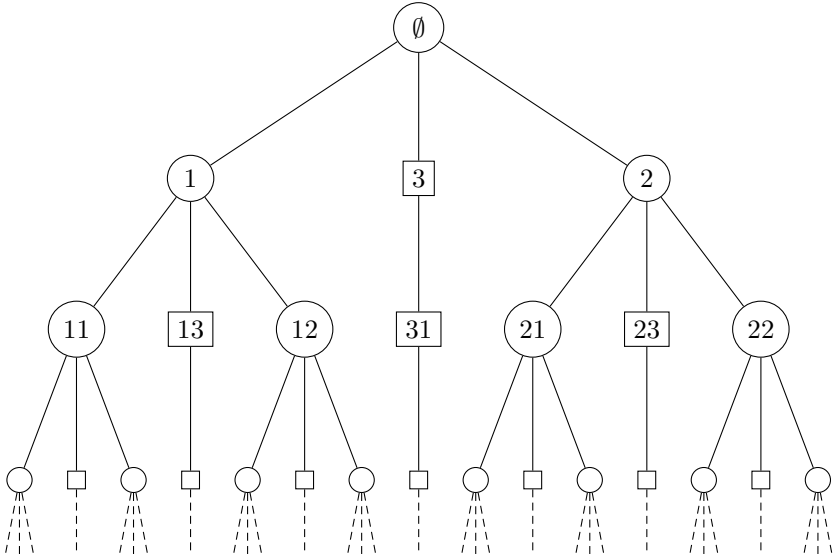
\begin{figure}[ht]
\begin{tikzpicture}[xscale=1.5]
      \node (n1) at (0,0) [circle, draw] {$\emptyset$};

      \node (n2) at (-2,-2) [circle, draw] {1};
      \node (n3) at (2,-2) [circle, draw] {2};
      \draw (n1) -- (n2);
      \draw (n1) -- (n3);

      \node (m1) at (0,-2) [rectangle, draw] {3};
      \draw (n1) -- (m1);
      \node (m11) at (0,-4) [rectangle, draw] {31};
      \draw (m1) -- (m11);
      \node (m111) at (0,-6) [rectangle, draw] {};
      \draw (m11) -- (m111);

      \node (n4) at (-3,-4) [circle, draw] {11};
      \node (n5) at (-1,-4) [circle, draw] {12};
      \node (n6) at (1,-4) [circle, draw] {21};
      \node (n7) at (3,-4) [circle, draw] {22};
      \draw (n2) -- (n4);
      \draw (n2) -- (n5);
      \draw (n3) -- (n6);
      \draw (n3) -- (n7);

      \node (m2) at (-2,-4) [rectangle, draw] {13};
      \draw (n2) -- (m2);
      \node (m21) at (-2,-6) [rectangle, draw] {};
      \draw (m2) -- (m21);

      \node (m3) at (2,-4) [rectangle, draw] {23};
      \draw (n3) -- (m3);
      \node (m31) at (2,-6) [rectangle, draw] {};
      \draw (m3) -- (m31);

      \node (n8) at (-3.5,-6) [circle, draw] {};
      \node (n9) at (-2.5,-6) [circle, draw] {};
      \node (n10) at (-1.5,-6) [circle, draw] {};
      \node (n11) at (-0.5,-6) [circle, draw] {};
      \node (n12) at (0.5,-6) [circle, draw] {};
      \node (n13) at (1.5,-6) [circle, draw] {};
      \node (n14) at (2.5,-6) [circle, draw] {};
      \node (n15) at (3.5,-6) [circle, draw] {};
      \draw (n4) -- (n8);
      \draw (n4) -- (n9);
      \draw (n5) -- (n10);
      \draw (n5) -- (n11);
      \draw (n6) -- (n12);
      \draw (n6) -- (n13);
      \draw (n7) -- (n14);
      \draw (n7) -- (n15);

      \node (m4) at (-3,-6) [rectangle,draw] {};
      \draw (n4) -- (m4);
      \node (m5) at (-1,-6) [rectangle,draw] {};
      \draw (n5) -- (m5);
      \node (m6) at (1,-6) [rectangle,draw] {};
      \draw (n6) -- (m6);
      \node (m7) at (3,-6) [rectangle,draw] {};
      \draw (n7) -- (m7);

      \draw [densely dashed] (n15) -- ++ (0,-1);
      \draw [densely dashed] (n15) -- ++ (.125,-1);
      \draw [densely dashed] (n15) -- ++ (-.125,-1);
      \draw [densely dashed] (n14) -- ++ (0,-1);
      \draw [densely dashed] (n14) -- ++ (.125,-1);
      \draw [densely dashed] (n14) -- ++ (-.125,-1);
      \draw [densely dashed] (n13) -- ++ (0,-1);
      \draw [densely dashed] (n13) -- ++ (.125,-1);
      \draw [densely dashed] (n13) -- ++ (-.125,-1);
      \draw [densely dashed] (n12) -- ++ (0,-1);
      \draw [densely dashed] (n12) -- ++ (.125,-1);
      \draw [densely dashed] (n12) -- ++ (-.125,-1);
      \draw [densely dashed] (n11) -- ++ (0,-1);
      \draw [densely dashed] (n11) -- ++ (.125,-1);
      \draw [densely dashed] (n11) -- ++ (-.125,-1);
      \draw [densely dashed] (n10) -- ++ (0,-1);
      \draw [densely dashed] (n10) -- ++ (.125,-1);
      \draw [densely dashed] (n10) -- ++ (-.125,-1);
      \draw [densely dashed] (n9) -- ++ (0,-1);
      \draw [densely dashed] (n9) -- ++ (.125,-1);
      \draw [densely dashed] (n9) -- ++ (-.125,-1);
      \draw [densely dashed] (n8) -- ++ (0,-1);
      \draw [densely dashed] (n8) -- ++ (.125,-1);
      \draw [densely dashed] (n8) -- ++ (-.125,-1);
      \draw [densely dashed] (m4) -- ++ (0,-1);
      \draw [densely dashed] (m5) -- ++ (0,-1);
      \draw [densely dashed] (m6) -- ++ (0,-1);
      \draw [densely dashed] (m7) -- ++ (0,-1);
      \draw [densely dashed] (m111) -- ++ (0,-1);
      \draw [densely dashed] (m21) -- ++ (0,-1);
      \draw [densely dashed] (m31) -- ++ (0,-1);

\end{tikzpicture}

\caption{A representation of the tree $T$ constructed in Example~\ref{expl} encoded with its Ulam--Harris notation. The vertices of the original binary tree are represented as circles.}
\end{figure}

In the forthcoming Section \ref{S:GWT}, we will analyze the case when $T$ is random and encodes a Galton--Watson process with reproduction law $\boldsymbol{\nu}$. We study the case of a multitype Galton--Watson tree in Section~\ref{sec:multitype}, generalizing the example above. The reinforced version, which is the main purpose of this work, will then be considered in Section \ref{S:Main}.

We end this section by presenting a version of the well-known many-to-one formula to compute the number of vertices at a fixed generation with a given empirical offspring distribution in $T$. Imagine that we distinguish  a (possibly finite)  line of descent $(U_n)$, randomly and recursively  as follows. The root of $T$ is of course the first element $U_0$ of the distinguished line. Then, for each $n\geq 0$, as long as the distinguished vertex $U_n$ is not a leaf of $T$, $U_{n+1}$  is chosen uniformly at random among the offspring of $U_n$. If $U_n$ is a leaf of $T$, then the distinguished line ends.

We shall refer to $(U_n)$ as a harmonic line\footnote{To avoid a possible misunderstanding, we stress that $U_n$ shall not be thought of as vertex picked uniformly at random amongst vertices at generation $n$, but rather results by selecting a child uniformly at random in a progeny, generation after generation. Observe that for the tree in Example~\ref{expl}, $U_2$ belongs to the binary subtree of $T$ with probability $4/9$, despite these vertices representing $4/7$th of the second generation. Indeed, at each step, the harmonic line has probability $2/3$ to stay in the binary subtree.} in $T$. For each generation $k$ for which the vertex $U_k$ is well-defined, we write $\hat{d}_k = d(U_k)$ for the number of children on the harmonic line at generation $k$. If the line has stopped by generation $k$, we write $\hat{d}_k = 0$ by convention. For all $n \in \N$, we denote by
\[
  \hat{\boldsymbol{\mu}}_{n} = \frac{1}{n} \sum_{j=0}^{n-1} \boldsymbol{\delta}_{\hat{d}_j},
\]
for the empirical offspring distribution at generation $n$ of the harmonic line. Note that $\hat{\boldsymbol{\mu}}_n = \boldsymbol{\mu}_{U_n}$ provided the line survives up to generation $n$. We also remark that $\hat{\boldsymbol{\mu}}(0) > 0$ if and only if the line stopped before generation $n$.

\begin{lemma}[Many-to-one formula]
\label{L1}
Take any  $(x_0,\ldots,x_{n-1}) \in (S\setminus\{0\})^n$ for some $n\geq 1$ and set $\boldsymbol{\mu} = \frac{1}{n} \sum_{j=0}^{n-1} \boldsymbol{\delta}_{x_j}$.
Using the notation \eqref{E:rhoell}, we have
\begin{align*}
&  \#\{v \in T : |v|=n \text{ and } d_{v_j} = x_j \text{ for all } j < n\}\\
&  = \P\left( \hat{d}_0 = x_0, \dots, \hat{d}_{n-1} = x_{n-1} \right) \times \exp(n \langle \boldsymbol{\mu}, \ln \rangle),
\end{align*}
In particular, for any probability measure $\boldsymbol{\rho}\in \mathcal P_S$ such that the measure $n\boldsymbol{\rho}$ is integer-valued, there is the identity
\begin{equation*}
  \#\{v\in T: |v|=n \text{ and } \boldsymbol{\mu}_v=\boldsymbol{\rho}\}
  = \P\left( \boldsymbol{\mu}_{U_n}=\boldsymbol{\rho}\right)  \times \exp(n \langle \boldsymbol{\rho}, \ln \rangle).
\end{equation*}
\end{lemma}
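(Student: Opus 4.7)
The plan is to analyze the harmonic line step by step and exploit the fact that at each generation the next vertex is picked uniformly among the children of the current one. The main observation is that for a fixed vertex $v\in T$ with $|v|=n$ and ancestors $v_0,v_1,\dots,v_{n-1}$, an easy induction on the recursive construction of $(U_n)$ gives
\[
  \P(U_n=v)=\prod_{j=0}^{n-1}\frac{1}{d(v_j)},
\]
since, conditionally on $U_j=v_j$ with $d(v_j)\geq 1$, the vertex $U_{j+1}$ is one of the $d(v_j)$ children of $v_j$ with equal probability, and the product formula telescopes along the ancestral lineage.

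From there, the first identity is obtained by summing this formula over all vertices $v$ with $|v|=n$ whose ancestors realize the prescribed degree sequence $(x_0,\dots,x_{n-1})\in(S\setminus\{0\})^n$. On such vertices $\prod_j 1/d(v_j)=\prod_j 1/x_j$ is constant, so
\[
  \P(\hat d_0=x_0,\dots,\hat d_{n-1}=x_{n-1})=\#\{v\in T:|v|=n,\ d(v_j)=x_j\ \forall j<n\}\cdot\prod_{j=0}^{n-1}\frac{1}{x_j},
\]
and I would then rewrite $\prod_{j=0}^{n-1}x_j=\exp\bigl(\sum_{j=0}^{n-1}\ln x_j\bigr)=\exp(n\langle\boldsymbol{\mu},\ln\rangle)$, using that, by the very definition of $\boldsymbol{\mu}$, $\sum_{j=0}^{n-1}\ln x_j=n\sum_k\mu(k)\ln k$. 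Rearranging yields the first displayed formula.

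For the second formula, I would sum the first one over all tuples $(x_0,\dots,x_{n-1})\in(S\setminus\{0\})^n$ with empirical distribution $\boldsymbol{\mu}=\boldsymbol{\rho}$ (a nonempty set precisely when $n\boldsymbol{\rho}$ is integer-valued and $\rho(0)=0$, the only relevant case since $\boldsymbol{\mu}_v$ never charges $0$). The exponential factor $\exp(n\langle\boldsymbol{\rho},\ln\rangle)$ depends only on $\boldsymbol{\rho}$ and therefore factors out of the sum. On the left, the union over such tuples recovers exactly $\{v\in T:|v|=n,\boldsymbol{\mu}_v=\boldsymbol{\rho}\}$; on the right, the sum of probabilities collects into $\P(\hat{\boldsymbol{\mu}}_n=\boldsymbol{\rho})$, and since $\rho(0)=0$ forces the harmonic line to survive up to generation $n$, this equals $\P(\boldsymbol{\mu}_{U_n}=\boldsymbol{\rho})$, giving the stated identity.

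There is essentially no obstacle here: the content is entirely bookkeeping once one has the uniform-descent probability $\P(U_n=v)=\prod 1/d(v_j)$. The only point requiring minor care is making sure the conventions \eqref{E:conv} are used consistently (so that vertices with a $0$-out-degree ancestor are correctly excluded) and recognizing that $\rho(0)=0$ in the second statement is forced both by the assumption that $n\boldsymbol{\rho}$ is realized as an empirical offspring distribution and by the requirement that the harmonic line does not die before generation~$n$.
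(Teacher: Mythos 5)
Your proof is correct and follows essentially the same route as the paper: compute $\P(U_n=v)=\prod_{j=0}^{n-1}1/d(v_j)$ from the step-by-step uniform choice, sum over vertices with the prescribed out-degree sequence, rewrite the product of degrees as $\exp(n\langle\boldsymbol{\mu},\ln\rangle)$, and then aggregate over tuples with a fixed empirical measure for the second identity. The discussion of the $\rho(0)=0$ edge case and the conventions is also in line with the paper.
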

\begin{remark}
We stress that in the case $\rho(0)>0$,  $\boldsymbol{\rho}$ cannot be an  empirical offspring distribution, and that the right-hand side in the statement equals $0$  by our conventions.
\end{remark}

\begin{proof}
It is immediate from the definition of $U_n$ that for all $v \in T$ with $|v|=n$, we have
\[
  \P(U_n = v) = \P(U_1 = v_1,\ldots, U_n = v_n)= \prod_{j=0}^{n-1} \frac{1}{d(v_j)}.
\]
As a result, for any $n\geq 1$ and $(x_0, \ldots, x_{n-1})\in S^n$, there is the identity
\begin{align*}
  &\phantom{=}\P\left(d(U_0)=x_0, \ldots, d(U_{n-1})=x_{n-1}\right)\\
  &= \sum_{|v|=n} \P(U_n = v) \ind{d(v_0)=x_0,\ldots, d(v_{n-1})=x_{n-1}}\\
  &=\frac{1}{\prod_{j=0}^{n-1} x_j}\#\{v\in T: |v|=n \text{ and } d(v_i)=x_i \text{ for all }i=0, \ldots, n-1\}.
\end{align*}
The first claim follows readily rewriting
$\prod_{j=0}^{n-1} x_j = \exp(n\crochet{\boldsymbol{\mu},\ln}),$ the second one by summing over all  $(x_0,\ldots, x_{n-1})$ such that $\sum_{j=0}^{n-1} \boldsymbol{\delta}_{x_j} = n \boldsymbol{\rho}$.
\end{proof}

\subsection{Basic background from large deviations theory}
\label{S:LDT}
We briefly recall in this section a few key facts from large deviations theory for finite alphabets that will be needed later on, referring e.g. to \cite[Chapters~2 and~3]{DZ} and \cite{BD} for details and references.

We denote the logarithmic moment generating function of $\boldsymbol{\nu}$ by
$$\Lambda_0(\lambda)\coloneqq \log \left( \sum \nu(k)\e^{\lambda(k)}\right),\qquad \lambda\in \R^{S};$$
the index $0$ in $\Lambda_0$ is meant to indicate later on that we are dealing with the memory parameter $q=0$.
Its Fenchel-Legendre transform is defined as
$$\Lambda_0^*(\boldsymbol{\mu})\coloneqq \sup_{\lambda\in \R^{S}}\left( \langle \boldsymbol{\mu}, \lambda\rangle -\Lambda_0(\lambda)\right), \qquad \boldsymbol{\mu}\in \R^{S}.$$
Recall also from \eqref{E:entrop} that
$H(\boldsymbol{\rho}| \boldsymbol{\nu})$
stands for the entropy of  $\boldsymbol{\rho}$ relative to $\boldsymbol{\nu}$, and that the variational formula allows for the identification
\begin{equation} \label{E:var}
H(\boldsymbol{\rho}| \boldsymbol{\nu})=\Lambda_0^*(\boldsymbol{\rho}).
\end{equation}

The relative entropy of a measure is linked to large deviations theory by the fundamental theorem of Sanov \cite[Theorem 2.1.10]{DZ}. Given  some probability space  $(\Omega, \mathcal A,\P)$ and a sequence $(\xi_i)_{i\geq 1}$ of random variables in $S$, we  write
$$L_n\coloneqq \frac{1}{n} \sum_{i=1}^{n}\boldsymbol{\delta}_{\xi_i}, \qquad n\geq 1$$
for the sequence of empirical measures they induce.
Recall that  $(L_n)_{n\geq 1}$  satisfies the Large Deviation Principle (LDP) with rate function
$I:  \mathcal P_{S} \to [0,\infty]$ if the latter is a lower semi-continuous function, and if we have for every closed $F\subset \mathcal P_{S}$,
$$\limsup_{n\to \infty} \frac{1}{n} \log \P(L_n\in F) \leq -\inf_{\boldsymbol{\rho}\in F} I(\boldsymbol{\rho}),$$
and for every open $G\subset \mathcal P_{S}$,
$$\liminf_{n\to \infty} \frac{1}{n}  \log \P(L_n\in G) \geq -\inf_{\boldsymbol{\rho}\in G} I(\boldsymbol{\rho}).$$
In this setting, Sanov's Theorem states that if $\P_0$ is a probability measure under which the sequence $(\xi_i)$  is i.i.d. with law $\boldsymbol{\nu}$, then
the sequence of empirical measures $(L_n)_{n\geq 1}$  satisfies the LDP under $\P_0$ with  rate function
\begin{equation}
  \label{eqn:identifRateFuction}
  I_0=H(\cdot | \boldsymbol{\nu})=\Lambda_0^*.
\end{equation}

The Gibbs Conditioning Principle (see \cite[Section 3.3]{DZ}) is  an important consequence of Sanov's Theorem; it can be stated as follows. Recall that $(\xi_i)$  is i.i.d. with law $\boldsymbol{\nu}$ under $\P_0$, and consider any  closed convex subspace $\Gamma$ of $\mathcal P_{S}$ with a non-empty interior.
 For any $n\geq 1$, under the conditional probability measure $\P_0(\cdot \mid L_n\in \Gamma)$, the sequence $(\xi_i)_{1\leq i \leq n}$ is exchangeable, and if $\boldsymbol{\nu}_{\Gamma,n}$ denotes its one-dimensional marginal distribution,
 $$\nu_{\Gamma,n}(k)\coloneqq \P_0(\xi_1=k\mid L_n\in \Gamma), \qquad k\in S,$$
 then
 $$\lim_{n\to \infty} \boldsymbol{\nu}_{\Gamma,n} = \arg\min_{\Gamma} H(\cdot | \boldsymbol{\nu}),$$
 where the right-hand side stands for the unique\footnote{Using the compactness of $\Gamma$, and the convexity of $\Gamma$ and strict convexity of $H(\cdot|\boldsymbol{\nu})$, the existence of this unique minimizer is guaranteed.} $\boldsymbol{\gamma}_0\in \Gamma$ such that
 $$H(\boldsymbol{\gamma}_0 | \boldsymbol{\nu})= \min_{\Gamma} H(\cdot | \boldsymbol{\nu}).$$

 A related result in  this setting concerns the most likely trajectory taken by the process of empirical measures conditionally on the rare event $\{ L_n\in \Gamma\}$.
 For any sequence of positive integers $(k(n))_{n\geq 1}$ with
 $$\lim_{n\to \infty} k(n) = \infty \quad\text{and}\quad \limsup_{n\to \infty} k(n)/n\leq 1,$$
 there is the convergence in probability
 \begin{equation}\label{E:mostliketraj} \lim_{n\to \infty}  \P_0(\mathrm{dist}_{\mathcal P_S}(L_{k(n)},\boldsymbol{\gamma}_0)< \varepsilon \mid L_n\in \Gamma) =1, \qquad \text{for all } \varepsilon>0,
 \end{equation}
 where $\mathrm{dist}_{\mathcal P_S}$ stands, say, for  the maximal distance on $\mathcal P_S$. In other words, the most likely trajectory for $L_n$ to end up in $\Gamma$ consists in staying at all time within a ball of radius $\epsilon$ centred at $\boldsymbol{\gamma}_0$.
 We also refer to Mogulskii's Theorem \cite[Section~5.1]{DZ} for a much deeper sample path LDP for random walks.

\subsection{The benchmark case of Galton--Watson trees} \label{S:GWT}
This section focuses on the case when $T$ is a Galton--Watson tree with reproduction law $\boldsymbol{\nu}$, i.e. without reinforcement. We write
$\P_0$  for the distribution of $T$, where the index $0$ is again meant to stress that the memory parameter is $q=0$. We write also
$$m_{\boldsymbol{\nu}}\coloneqq \sum k \nu(k)$$
for the mean reproduction number,
 and  $\bar{\boldsymbol{\nu}}_0$ for the size-biased reproduction law, that is
$$\bar{\nu}_0(k)= k \nu(k)/m_{\boldsymbol{\nu}}, \qquad k\in S.$$

The following result will serve for us as a benchmark in the study of the reinforced setting; it analyzes the empirical offspring distribution at a large generation $n$ of $T$. Although it is well-known that a typical individual at the $n$th generation of a Galton-Watson process has an empirical offspring distribution close to the size-biased reproduction law $\bar{\boldsymbol{\nu}}_0$ (see e.g. \cite{Azais}), estimations of the number of individuals with a given empirical offspring distribution appear to be less well-known.

\begin{theorem} \label{T0} Recall the notation \eqref{E:rhoell}. The following assertions hold for usual Galton--Watson trees:
\begin{enumerate}
\item[(i)]\textbf{Concentration around  the size-biased reproduction law:} For every neighborhood $G$ of $\bar{\boldsymbol{\nu}}_0$, there exists $\varepsilon=\varepsilon(G)>0$ such that for all $n\geq 1$,
$$\E_0(\#\{v\in T: |v|=n \ \&\  \boldsymbol{\mu}_v\not \in G\})) \leq \e^{-\varepsilon n} \E_0(\#\{v\in T: |v|=n\}).$$
\item[(ii)] \textbf{Evanescent laws:} Any  ${\boldsymbol{\rho}}\in \mathcal P_S$ with
$$\langle \boldsymbol{\rho}, \ln \rangle < \Lambda_0^*(\boldsymbol{\rho})$$
 is evanescent, $\P_0$-a.s.

\item[(iii)] \textbf{Strongly persistent laws:} For any ${\boldsymbol{\rho}}\in \mathcal P_S$ with
$$\langle \boldsymbol{\rho}, \ln \rangle > H(\boldsymbol{\rho} |  \boldsymbol{\nu} ) , $$
$\boldsymbol{\rho}$ is strongly persistent, $\P_0$-a.s. conditionally on non-extinction.
\end{enumerate}
\end{theorem}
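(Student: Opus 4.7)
The unifying idea is to take expectations in Lemma~\ref{L1} under $\P_0$. Since the out-degrees $(\hat d_j)$ along the harmonic line of a Galton--Watson tree are i.i.d.\ samples from $\boldsymbol\nu$, summing the first identity of Lemma~\ref{L1} over sequences $(x_0,\dots,x_{n-1})\in (S\setminus\{0\})^n$ whose empirical measure lies in a given $G\subset\mathcal P_S$ yields
\[
  \E_0\!\left[\#\{v\in T:\,|v|=n,\ \boldsymbol\mu_v\in G\}\right]
  \;=\; m_\nu^n\,\P\bigl(L_n\in G\bigr),
\]
where $L_n=\tfrac1n\sum_{i=1}^n\boldsymbol\delta_{\xi_i}$ is the empirical measure of $n$ i.i.d.\ samples $(\xi_i)$ with the size-biased law $\bar{\boldsymbol\nu}_0$. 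Sanov's theorem governs the right-hand side, while the elementary identity, valid whenever $\mu(0)=0$,
\[
  \log m_\nu - H(\boldsymbol\mu|\bar{\boldsymbol\nu}_0)
  \;=\; \langle\boldsymbol\mu,\ln\rangle - H(\boldsymbol\mu|\boldsymbol\nu)
  \;=\; \langle\boldsymbol\mu,\ln\rangle - \Lambda_0^*(\boldsymbol\mu),
\]
converts the resulting exponential rates into the quantities appearing in the statement.

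Part~(i) follows immediately: if $G$ is a neighbourhood of $\bar{\boldsymbol\nu}_0$, Sanov's upper bound on the closed set $G^c$ yields $\P(L_n\notin G)\leq e^{-\varepsilon n}$ for some $\varepsilon>0$, and dividing by $\E_0[\#\{v:|v|=n\}]=m_\nu^n$ gives the announced estimate. For part~(ii), the hypothesis $\langle\boldsymbol\rho,\ln\rangle<\Lambda_0^*(\boldsymbol\rho)$ translates via the identity above into $\log m_\nu<H(\boldsymbol\rho|\bar{\boldsymbol\nu}_0)$; by lower semicontinuity one shrinks a closed neighbourhood $G$ of $\boldsymbol\rho$ so that $\log m_\nu<\inf_G H(\cdot|\bar{\boldsymbol\nu}_0)-c$ for some $c>0$. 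Sanov then yields $\E_0[\#\{v:|v|=n,\,\boldsymbol\mu_v\in G\}]\leq e^{-cn/2}$ for large $n$, which is summable; Markov's inequality and Borel--Cantelli imply that $\P_0$-almost surely only finitely many vertices of $T$ fall in $G$, proving evanescence.

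Part~(iii), with $\eta\coloneqq\log m_\nu-H(\boldsymbol\rho|\bar{\boldsymbol\nu}_0)>0$, requires an actual infinite ray with $\boldsymbol\mu_{v_n}\to\boldsymbol\rho$. The plan is a branching-within-branching cascade. For fixed small $\epsilon>0$ and $N=N(\epsilon)$ large, define $T^*_{N,\epsilon}\subseteq T$ as the set of descendants $v$ with $|v|$ a multiple of $N$ such that the empirical distribution along each successive length-$N$ segment from the root to $v$ lies in $B(\boldsymbol\rho,\epsilon)$. The first-moment formula applied to one segment, combined with the Sanov lower bound, shows that the expected number of children in $T^*_{N,\epsilon}$ of any vertex is at least $e^{N(\eta-o_{\epsilon}(1))}$, which exceeds $1$ for $N$ large. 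Being a homogeneous Galton--Watson tree, $T^*_{N,\epsilon}$ is then supercritical and survives with positive probability $p_{N,\epsilon}>0$; on survival, any infinite ray $(v_n)$ of $T^*_{N,\epsilon}$ satisfies $\limsup_n\|\boldsymbol\mu_{v_n}-\boldsymbol\rho\|\leq\epsilon$. Letting $A_v$ denote the occurrence of such a ray in the subtree $T_v$, the branching property gives $\P_0(A_v\mid v\in T)=p_{N,\epsilon}$ and conditional independence of $\{A_v\}_{|v|=n}$ given $\mathcal F_n$; hence $(1-p_{N,\epsilon})^{Z_n}\to 0$ on non-extinction forces $A_v$ to hold $\P_0$-a.s.\ for some $v$, and the finite root-to-$v$ prefix has vanishing weight in the Cesàro average, so the ray in $T_v$ extends to one in $T$ with the same $\limsup$.

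A compactness argument finally produces a single ray converging to $\boldsymbol\rho$. For each $k\geq 1$, applying the preceding step with $\epsilon=1/k$ yields, $\P_0$-a.s.\ on non-extinction, the non-emptiness of the random set $R_k$ of infinite rays $(v_n)$ in $T$ satisfying $\limsup_n\|\boldsymbol\mu_{v_n}-\boldsymbol\rho\|\leq 1/k$. The $R_k$ are closed in the product topology (since $\limsup$ is upper semicontinuous) and nested $R_1\supseteq R_2\supseteq\cdots$; the space of infinite rays in $T$ is compact ($S$ being finite), so $\bigcap_k R_k\neq\emptyset$ $\P_0$-a.s.\ on non-extinction, and any ray in this intersection satisfies $\boldsymbol\mu_{v_n}\to\boldsymbol\rho$. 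The main technical point is verifying the supercriticality of $T^*_{N,\epsilon}$ via the Sanov lower bound for each fixed $\epsilon$; once this is in place, the branching-property upgrade and the compactness wrap-up are fairly routine.
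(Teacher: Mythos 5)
Your proofs of (i) and (ii) are correct and, modulo presentation, are essentially the paper's: you rewrite the many-to-one identity from Lemma~\ref{L1} directly in terms of the size-biased law $\bar{\boldsymbol{\nu}}_0$ rather than invoking the spinal change of measure $\P_0^a$ with $a\equiv 1/m_{\boldsymbol{\nu}}$, but these are the same computation, and Sanov's upper bound plus the algebraic identity \eqref{eqn:remark} does the rest. One small slip in the narration: the out-degrees $(\hat d_j)$ along the harmonic line under $\P_0$ are i.i.d.\ with law $\boldsymbol{\nu}$, not $\bar{\boldsymbol{\nu}}_0$; the size-biased law appears only after the factor $e^{n\langle\boldsymbol{\mu},\ln\rangle}$ from Lemma~\ref{L1} is absorbed into it. The displayed identity is nonetheless correct.

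For (iii) you take a genuinely different route from the paper. The paper proves strong persistence by a change of measure: with $a(k)=\rho(k)/(k\nu(k))$, under $\P_0^a$ the spine's out-degrees are i.i.d.\ with law $\boldsymbol{\rho}$, and the hypothesis $\langle\boldsymbol{\rho},\ln\rangle>H(\boldsymbol{\rho}|\boldsymbol{\nu})$ is exactly the uniform-integrability condition in Lemma~\ref{L:spinal}(ii), so $\P_0^a\ll\P_0$ on the whole tree and a converging ray is produced directly. Your block/cascade construction of an embedded supercritical Galton--Watson tree $T^*_{N,\epsilon}$ is a valid alternative (it is essentially the device used in Lemma~\ref{lem}(i), which the paper attributes to Biggins) and your upgrade via the $0$--$1$ law for hereditary events is sound, yielding, for each fixed $\epsilon>0$, a.s.\ on non-extinction a ray with $\limsup_n\|\boldsymbol{\mu}_{v_n}-\boldsymbol{\rho}\|\leq\epsilon$.

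However, the final compactness step has a genuine gap. The set $R_k=\{(v_n):\limsup_n\|\boldsymbol{\mu}_{v_n}-\boldsymbol{\rho}\|\leq 1/k\}$ is \emph{not} closed in the product topology on the boundary of $T$, and $\limsup_n\|\boldsymbol{\mu}_{v_n}-\boldsymbol{\rho}\|$ is \emph{not} upper semicontinuous there. Convergence of rays in the product topology only controls arbitrarily long prefixes, while $\limsup$ is a tail condition: one can take rays $(v^{(j)})\to(v)$ that agree with $v$ up to generation $j$ but then diverge from $\boldsymbol{\rho}$, giving $\limsup_j f(v^{(j)})>f(v)$; conversely, rays agreeing with a bad $v$ on a long prefix and then correcting give $f(v^{(j)})=0$ while $f(v)>0$. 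So from the almost-sure non-emptiness of each $R_k$ you cannot conclude that $\bigcap_k R_k\neq\emptyset$; the finite intersection property fails without closedness. Making the cascade idea work requires a more careful concatenation, e.g.\ building a single nested sequence of starting vertices $w_1,w_2,\dots$ along one ray, at scales $N_k\uparrow\infty$ and radii $1/k\downarrow 0$, rather than invoking compactness on the non-closed sets $R_k$; or, more simply, using the change-of-measure argument as the paper does.
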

The quantities $\Lambda_0^*(\boldsymbol{\rho})$ and $H(\boldsymbol{\rho} |  \boldsymbol{\nu})$  in (ii) and (iii) are identical by the variational formula \eqref{E:var}; we used on purpose these two different naming conventions to make comparison with the reinforced case in the forthcoming Theorem~\ref{Tref} more transparent. We also observe that if
$\rho(0)>0$, then $\langle \boldsymbol{\rho}, \ln \rangle=-\infty$ and the condition in (ii) is automatically fulfilled (we already pointed out that such a probability measure is necessarily evanescent).

\begin{remark} \begin{enumerate}
\item We know that $\E_0(\#\{v\in T: |v|=n\})=m_{\boldsymbol{\nu}}^n.$ Further, in the critical or subcritical case $m_{\boldsymbol{\nu}}\leq 1$, $T$ is finite and any law ${\boldsymbol{\rho}}\in \mathcal P_S$ is obviously evanescent, $\P_0$-a.s. This does not contradict this claim of concentration around the size-biased reproduction law in expectation.

\item Theorem~\ref{T0} essentially shows that any distribution is either evanescent of strongly persistent for a Galton-Watson tree (with edge cases when $\langle \boldsymbol{\rho}, \ln \rangle = H(\boldsymbol{\rho} |  \boldsymbol{\nu} )$ to be separately treated). We consider in Section~\ref{sec:multitype} an example of a random tree with weakly --but not strongly-- persistent measures.
\item Given the concentration of the empirical offspring distribution of the population at time $n$ around $\bar{\boldsymbol{\nu}}_0$, it is worth noting that immediate algebra yield the identity
\begin{equation}
  \label{eqn:remark}
  \log m_{\boldsymbol{\nu}} - H(\boldsymbol{\rho}|\bar{\boldsymbol{\nu}}) = \crochet{\boldsymbol{\rho},\ln} - H(\boldsymbol{\rho}|\boldsymbol{\nu}).
\end{equation}
This gives an intuitive reason for the importance of comparing the quantities \eqref{E:entrop} with \eqref{E:rhoell} as mentioned in the introduction to estimate the presence of individuals with an empirical offspring distribution around $\boldsymbol{\rho}$.
\end{enumerate}
\end{remark}

Theorem \ref{T0}  can be seen as a disguised version of \cite[Theorem A]{Big78} that describe the distribution of individuals in a multidimensional branching random walk. To see the connection, let us remark that $$v \in T \mapsto (|v| {\mu}_v(j), j \in S)$$ is a branching random walk on $\Z^S$. In this branching random walk, an individuals at position $\boldsymbol{x} \in \Z^S$ creates $j$ children at position $\boldsymbol{x} + \boldsymbol{e_j}$ with probability ${\nu}(j)$, where $e_j(i) = \ind{i=j}$. Consequently, the set of non-empty vertices at the $n$th generation of this branching random walk can be well-approached by $nC$, where $C = \{\boldsymbol{x} \in \R^S : \Lambda^*(\boldsymbol{x}) < m_{\boldsymbol{\nu}} \}$. More precise estimates, such as the ones obtained in \cite[Theorem 5]{BiR05} for non-lattice branching random walks, could be adapted to the current setting as well.

The proof of Theorem \ref{T0} will easily follow from the well-known spinal decomposition which we now briefly present, tailored for our purposes.
Let $a: S\to \R_+$ be a nonnegative function of the out-degrees such that
\begin{equation}
  \label{E:har}
  \sum_{k\in S} ka(k) \nu(k)= 1.
\end{equation}
The value of $a(0)$ being irrelevant for \eqref{E:har}, we always assume, without loss of generality, that $a(0) = 0$.
We then denote by $\boldsymbol{\nu}^a$ the probability measure on $S$ given by
$$\nu^a(k)=k a(k) \nu(k).$$
The best known and most important example is when
$a(k) = 1/m_{\boldsymbol{\nu}}$ for all $k \geq 1$; then
$\boldsymbol{\nu}^a$ is the size-biased version $\bar{\boldsymbol{\nu}}_0$ of the reproduction law.

Recall that for every vertex $v\in T$, $d(v)$ denotes its out-degree, $|v|$ its generation, and $v_j$ its forebear at generation $j\leq |v|$.
It is seen from \eqref{E:har} and the branching property that the process
$$W^a_n\coloneqq \sum_{|v|=n} \prod_{j=0}^{n-1} a(d(v_j)) , \qquad n\geq 1,$$
is a $\P_0$-martingale with expectation $\E_0(W^a_n)=1$. In the case when
$a(k)\equiv 1/m_{\boldsymbol{\nu}}$,
$W^a_n$ is the fundamental martingale for Galton--Watson processes appearing e.g. in \cite[Theorem 1 on page 9]{AN}. This enables us to define
a probability measure $\P_0^a$ describing the joint law of a random tree $T$ together with an infinite random line of descent $(V^a_n)_{n\geq 0}$, usually referred to as a spine,
as follows. For any $n\geq 1$, any subtree $t_n$ of the Ulam tree with height $n$ and any $v$ vertex of $t_n$ at height $n$, if we write $T_n$ for the subtree obtained by pruning $T$ at generation $n$, then
\begin{equation}
  \label{E:defspin}
  \P^a_0\left(T_n= t_n, V^a_n=v\right)= \P_0(T_n=t_n) \prod_{j=0}^{n-1} a(d_{t_n}(v_j)),
\end{equation}
where $d_{t_n}(v_i)$ stands for the out-degree in $t_n$ of the forebear $v_i$ of $v$ at generation $i<n$.
\begin{remark}
In the case $\nu(0)=0$ when the probability of an empty progeny equals zero,
the harmonic line $(U_n)$ of Section \ref{S:not} is infinite and can also be viewed as a spine $(V^a_n)$ for the function  $a(k)\equiv k^{-1}/\sum j^{-1}\nu(j)$. The martingale is then trivial,  $W^a_n\equiv 1$.
\end{remark}

We can now state a necessary and sufficient condition for the uniform integrability of the martingale $(W_n^a)$, extending the Kesten-Stigum $L \log L$ criterion \cite{KestenStigum} for the case $a \equiv 1/m_{\boldsymbol{\nu}}$. This result can be seen as a version of Biggins' theorem \cite{Big77}, we adapt here a proof by Lyons~\cite{Lyo95} of this fact.

\begin{lemma} \label{L:spinal} The following assertions hold:
\begin{enumerate}
\item[(i)] Under $\P^a_0$, the sequence $(d(V^a_n))_{n\geq 0}$ of out-degrees along the spine
 is i.i.d. with law $\boldsymbol{\nu}^a$.

\item[(ii)] If
$$ \sum  \nu^a(k) \log a(k) <0,$$
then  $(W^a_n)_{n\geq 1}$ is a uniformly integrable  martingale under $\P_0$. Otherwise $\lim_{n \to \infty} W^a_n = 0$ a.s.
\end{enumerate}
\end{lemma}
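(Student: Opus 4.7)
For (i), I would compute the joint law of the first $n$ spinal out-degrees directly from \eqref{E:defspin}. Fixing $x_0, \ldots, x_{n-1} \in S$ and summing \eqref{E:defspin} over trees $t_n$ and then over vertices $v$ at generation $n$ with the prescribed ancestral out-degrees gives
\[
  \P_0^a\bigl(d(V_j^a) = x_j,\, j < n\bigr) = \prod_{j=0}^{n-1} a(x_j)\cdot \E_0\bigl[\#\{|v|=n : d(v_j)=x_j,\, j<n\}\bigr].
\]
The expectation on the right equals $\prod_{j=0}^{n-1} x_j \nu(x_j)$ by the elementary Galton--Watson many-to-one used inside the proof of Lemma~\ref{L1}, so the joint law factorises as $\prod_{j=0}^{n-1} \nu^a(x_j)$, giving the iid claim with marginal $\boldsymbol{\nu}^a$.

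For (ii), the same summation identifies $W_n^a$ as the Radon--Nikodym density of $\P_0^a$ with respect to $\P_0$ on $\sigma(T_n)$, so $(W_n^a)$ is a nonnegative $\P_0$-martingale converging $\P_0$-a.s.\ to some $W_\infty^a \in [0,\infty)$. The Lebesgue decomposition of $\P_0^a$ with respect to $\P_0$ then provides the key identity $\E_0[W_\infty^a] = \P_0^a(W_\infty^a<\infty)$; combined with the branching-induced dichotomy (either UI holds, or $W_\infty^a \equiv 0$ $\P_0$-a.s.), this reduces the matter to showing that $W_\infty^a$ is finite $\P_0^a$-a.s.\ exactly when $\sum\nu^a(k)\log a(k)<0$. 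The central tool is the lower bound
\[
  W_n^a \;\geq\; \prod_{j=0}^{n-1} a(d(V_j^a)) \;=\; \exp(S_n), \qquad S_n := \sum_{j=0}^{n-1} \log a(d(V_j^a)),
\]
obtained by keeping only $v = V_n^a$ in the sum defining $W_n^a$. By (i), $(S_n)$ is under $\P_0^a$ a random walk with iid increments of mean $\mu := \sum \nu^a(k)\log a(k)$.

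If $\mu \geq 0$, then $\limsup_n S_n = +\infty$ $\P_0^a$-a.s.\ (SLLN when $\mu>0$; oscillation of non-degenerate centered walks when $\mu=0$; the genuinely degenerate case $\log a\equiv 0$ on $\mathrm{supp}\,\boldsymbol{\nu}^a$ forces $m_{\boldsymbol\nu}=1$ and is handled directly). Hence $\limsup_n W_n^a = +\infty$ $\P_0^a$-a.s., so $\E_0[W_\infty^a]=0$ and $W_n^a\to 0$ $\P_0$-a.s. If $\mu<0$, I would use the spine decomposition under $\P_0^a$: conditionally on the infinite spine $\mathcal G_\infty := \sigma(V_j^a,\, d(V_j^a);\, j\geq 0)$, the $d(V_j^a)-1$ subtrees hanging off the spine at generation $j+1$ are independent Galton--Watson trees with reproduction law $\boldsymbol{\nu}$, whose own martingales have $\P_0$-mean one, yielding
\[
  \E_0^a[W_n^a \mid \mathcal G_\infty] = e^{S_n} + \sum_{j=0}^{n-1} (d(V_j^a)-1)\,e^{S_{j+1}}.
\]
Since out-degrees are bounded (as $\#S<\infty$) and $S_n\sim \mu n$ by SLLN, this stays $\P_0^a$-a.s.\ bounded in $n$ by a finite $\mathcal G_\infty$-measurable random variable, and conditional Fatou delivers $\liminf_n W_n^a<\infty$ $\P_0^a$-a.s.

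The main obstacle I anticipate is closing the $\mu<0$ case: upgrading the conditional Fatou bound on $\liminf_n W_n^a$ to the assertion $W_\infty^a<\infty$ $\P_0^a$-a.s.\ requires combining the $\P_0$-a.s.\ existence of the martingale limit with the Lebesgue decomposition identity above, precisely as in Lyons~\cite{Lyo95}. Everything else---the iid spine law, the spine decomposition itself, and the summability of $\sum (d(V_j^a)-1)e^{S_{j+1}}$---is routine once this blueprint is in place.
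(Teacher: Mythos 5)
Your proposal is correct and follows the same Lyons-style spine change-of-measure argument as the paper: the same identification of $W_n^a$ as a Radon--Nikodym density, the same random walk $S_n=\sum_{j<n}\log a(d(V_j^a))$ along the spine driving both cases, and the same appeal to the measure-change dichotomy (Durrett's criterion / \cite{BiK04}); your explicit conditional-expectation computation is precisely what the paper's citation of the spinal decomposition packages in one line. The obstacle you flag at the end---upgrading $\liminf_n W_n^a<\infty$ $\P_0^a$-a.s.\ to the assertion needed for Durrett's criterion---dissolves on noting that $1/W_n^a$ is a nonnegative $\P_0^a$-supermartingale, so $W_n^a$ converges $\P_0^a$-a.s.\ in $(0,\infty]$ and $\liminf=\limsup$ there.
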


\begin{proof} The first assertion is immediate from \eqref{E:defspin}. For the second, note from the first that under $\P^a_0$,
the process
$$\log\left( \prod_{j=0}^{n-1} a(d(V^a_j))\right) = \sum_{j=0}^{n-1} \log  a(d(V^a_j)), \qquad n\geq 1,$$
is a random walk with drift
$$\E_0^a\left( \log d(V^a_n)\right) =  \sum  \nu^a(k) \log a(k).$$

If $\sum \nu^a(k) \log a(k) < 0$, then $ \log\left( \prod_{j=0}^{n-1} a(d(V^a_j))\right)$ is a random walk with negative drift.
It follows from the law of large numbers that
$$\sum_{n=1}^{\infty} \prod_{j=0}^{n-1} a(d(V^a_j))< \infty, \qquad \P_0^a\text{-a.s.}$$
Since out-degrees are bounded, we deduce from the spinal decomposition (see \cite[Section 12]{BiK04}) that $\limsup_{n\to \infty} W^a_n<\infty$, $\P_0^a$-a.s.
By Durrett's criterion (see \cite[Theorem 4.3.5]{Dur19} or \cite[Theorem 3]{BiK04}), this entails in turns  the uniform integrability under $\P_0$.

Reciprocally, if $\sum \nu^a(k) \log a(k) \geq  0$, the drift of $ \log\left( \prod_{j=0}^{n-1} a(d(V^a_j))\right)$ is well-defined and non-negative. Thus
\[
  \limsup_{n \to \infty} W^a_n \geq  \limsup_{n \to \infty} \prod_{j=0}^{n-1} a(d(V^a_j)) = \infty \quad \P_0^a\text{-a.s.}
\]
Applying again Durrett's criterion, we deduce that $W^a_n \to 0$ a.s. under $\P_0$.
\end{proof}

We now have all the ingredients needed to establish Theorem \ref{T0}.

\begin{proof}[Proof of Theorem \ref{T0}]  (i) The average population size of a Galton--Watson process  at generation $n\geq 0$ is
 $$\E_0(\#\{v\in T: |v|=n\})= m_{\boldsymbol{\nu}}^n.$$
 Take $a(k)\equiv 1/m_{\boldsymbol{\nu}}$ and write $\bar{\P}_0=\P^a_0$.
 Since  then
 $$\prod_{j=0}^{n-1} a(d_{t_n}(v_j))= m_{\boldsymbol{\nu}}^{-n}$$ for any vertex $v_n$ at generation $n$ in any tree  $t_n$ with height $n$,
  we deduce from \eqref{E:defspin}  the identity
 $$\frac{ \E_0(\#\{v\in T: |v|=n \ \&\  \boldsymbol{\mu}_v \not \in G\})} { \E_0(\#\{v\in T: |v|=n\})} = \bar{\P}_0(L_n \in F),$$
 where $F=\mathcal P_S\backslash G$ and
 $$L_n\coloneqq \frac{1}{n} \sum_{i=1}^{n}\boldsymbol{\delta}_{d(V^a_{i-1})}.$$
 As we know from Lemma \ref{L:spinal}(i) that under $\bar{\P}_0$, $L_n$
 is the empirical measure of i.i.d. variables  distributed according to the size-biased reproduction law $\bar{\boldsymbol{\nu}}_0$, we deduce from Sanov's large deviations upper-bound that
 $$\limsup_{n\to \infty} \frac{1}{n} \log \bar{\P}_0(L_n \in F) \leq - \inf_{\boldsymbol{\rho}\in F} H(\boldsymbol{\rho} | \bar{\boldsymbol{\nu}}_0).$$
 The entropy relative to $\bar{\boldsymbol{\nu}}_0$,  $\boldsymbol{\rho}\mapsto  H(\boldsymbol{\rho} | \bar{\boldsymbol{\nu}}_0)$,  is a continuous map on $\mathcal{P}_S$ which is strictly positive except at $\bar{\boldsymbol{\nu}}_0$. Since $ F$ avoids a neighborhood of $ \bar{\boldsymbol{\nu}}_0$, the right-hand side above is strictly negative.

 (ii) Let $\boldsymbol{\rho}$ be as in the statement. By continuity of the entropy relative to $\boldsymbol{\nu}$, we can find  a closed
 neighborhood $F$ of  ${\boldsymbol{\rho}}$ such that
 $$\sum \rho'(k) \log k < H(\boldsymbol{\rho}' |  \boldsymbol{\nu}), \qquad\text{ for all }\boldsymbol{\rho}' \in F.$$
 On the other hand, we have seen in (i) that
 $$\E_0(\#\{v\in T: |v|=n \ \&\  \boldsymbol{\mu}_v  \in F\}) = m_{\boldsymbol{\nu}}^n \times \bar{\P}_0(L_n \in F);$$
 and then again by Sanov's large deviations upper-bound,
 $$\limsup_{n\to \infty} \frac{1}{n} \log \E_0(\#\{v\in T: |v|=n \ \&\  \boldsymbol{\mu}_v  \in F\}) \leq
\log (m_{\boldsymbol{\nu}})- \inf_{\boldsymbol{\rho}'\in F} H(\boldsymbol{\rho}' | \bar{\boldsymbol{\nu}}_0).$$

It now suffices to write
$$ H(\boldsymbol{\rho}' | \bar{\boldsymbol{\nu}}_0)= \sum \rho'(k) \log\left(\frac{ \rho'(k) m_{\boldsymbol{\nu}}}{k \nu(k)}\right)= H(\boldsymbol{\rho}' | {\boldsymbol{\nu}}) + \log(m_{\boldsymbol{\nu}})- \sum \rho'(k) \ln k,$$
see \eqref{eqn:remark}, to get
$$\limsup_{n\to \infty} \frac{1}{n} \log \E_0(\#\{v\in T: |v|=n \ \&\  \boldsymbol{\mu}_v  \in F\}) <0.$$
Summing over generations yields
$$\E_0(\#\{v\in T:  \boldsymbol{\mu}_v  \in F\}) <\infty,$$
and we conclude that $\boldsymbol{\rho}$ is evanescent, $\P_0$-a.s.

(iii) Let $\boldsymbol{\rho}$ be as in the statement; in particular $\rho(0)=0$. We set $a(k)\coloneqq \rho(k)/(k\nu(k))$ for $k\in S$ --which satisfies \eqref{E:har}-- so that $\boldsymbol{\rho}= \boldsymbol{\nu}^a$.
By Lemma \ref{L:spinal}(i) and the law of large numbers for the sequence $(d(V^a_n))_{n\geq 0}$ of out-degrees along the spine, we see that $\boldsymbol{\rho}$ is strongly persistent $\P_0^a$-a.s.
On the other hand, we have by the variational formula that
$$
  \sum \nu^a(k) \ln k = \crochet{\boldsymbol{\rho},\ln} > H(\boldsymbol{\rho}|\boldsymbol{\nu}) = \sup_{\lambda \in \R^S} \left( \crochet{\boldsymbol{\rho},\lambda} - \Lambda_0(\lambda)\right).
$$
Applying this inequality to $\lambda(k) = \log (k a(k))$, so that  $\Lambda_0(\lambda) = 0$, we have
$$ \sum \nu^a(k) \ln k > \sum \nu^a(k) \log (k a(k)),$$
so the assumption of Lemma \ref{L:spinal}(ii) holds.

Note from \eqref{E:defspin} that for every $n\geq 1$, the distribution of $T_n$ under $\P^a_0$ is absolutely continuous with respect to that under $\P_0$ with density $W^a_n$, so the uniform integrability of the martingale $W^a_n$ enables us to deduce
that the distribution of whole tree $T$ under $\P^a_0$ is again absolutely continuous with respect to that under $\P_0$, with density given by the terminal value $W^a_{\infty}$.
Hence  $\boldsymbol{\rho}$ is strongly persistent with a strictly positive probability under $\P_0$, and the stronger claim conditionally on non-extinction follows from a standard argument involving the branching property.
\end{proof}

We now conclude this section with a refinement of Theorem \ref{T0} which will be useful in the next section. This result allows to estimate the almost sure growth rate of the number of individuals with an empirical distribution in a neighbourhood of $\boldsymbol{\rho}$.
\begin{lemma} \label{lem}
Let $\boldsymbol{\rho} \in \mathcal{P}_S$ and $G$ an open neighborhood  of $\boldsymbol{\rho}$.
\begin{enumerate}
  \item[(i)] If $m_{\boldsymbol{\nu}} >1$ and  $\crochet{\boldsymbol{\rho},\ln} - H(\boldsymbol{\rho}|\boldsymbol{\nu}) >0$, then we have     \[
    \liminf_{n \to \infty} \frac{1}{n} \log \#\{v\in T: |v|=n\, \& \, \boldsymbol{\mu}_v \in G\} \geq \crochet{\boldsymbol{\rho},\ln} - H(\boldsymbol{\rho}|\boldsymbol{\nu}), \]
    $\P_0$-a.s. on the survival event (i.e. when $T$ is infinite).
  \item[(ii)] If $m_{\boldsymbol{\nu}} <1$, then we have
  \[
    \liminf_{n \to \infty} \frac{1}{n} \log \P_0(\exists v\in T: |v|=n \,\&\, \boldsymbol{\mu}_v \in G) \geq  \crochet{\boldsymbol{\rho},\ln} - H(\boldsymbol{\rho}|\boldsymbol{\nu}).
  \]
\end{enumerate}
\end{lemma}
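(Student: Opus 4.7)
The plan is to prove both parts via a spinal change of measure, adapting the strategy of Theorem~\ref{T0}. Set $R:=\langle\boldsymbol{\rho},\ln\rangle-H(\boldsymbol{\rho}|\boldsymbol{\nu})$ and assume $\rho(0)=0$ and $\mathrm{supp}(\boldsymbol{\rho})\subset\mathrm{supp}(\boldsymbol{\nu})$ (otherwise $R=-\infty$ and both claims are vacuous). For (i) I would take $a(k):=\rho(k)/(k\nu(k))$ with $a(0):=0$, so $\boldsymbol{\nu}^a=\boldsymbol{\rho}$; by Lemma~\ref{L:spinal}(i) the out-degrees along the spine $V^a$ are iid $\boldsymbol{\rho}$ under $\P_0^a$, and Sanov yields $\delta=\delta(G)>0$ with $\P_0^a(\boldsymbol{\mu}_{V^a_n}\notin G)\le e^{-\delta n}$. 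The spinal identity
\[
\E_0\!\left[W_n^{a,G}\right]=\P_0^a\!\left(\boldsymbol{\mu}_{V^a_n}\in G\right),\qquad W_n^{a,G}:=\sum_{\substack{|v|=n\\ \boldsymbol{\mu}_v\in G}}\prod_{j=0}^{n-1}a(d(v_j)),
\]
therefore gives $\E_0[W_n^{a,G}]\ge 1-e^{-\delta n}$. Since $\sum_k\rho(k)\log a(k)=H(\boldsymbol{\rho}|\boldsymbol{\nu})-\langle\boldsymbol{\rho},\ln\rangle=-R$, by continuity we may shrink $G$ so that $\langle\boldsymbol{\mu},\log a\rangle\le -R+\epsilon$ for every $\boldsymbol{\mu}\in G$; for any vertex with $\boldsymbol{\mu}_v\in G$ this bounds $\prod_{j<n}a(d(v_j))\le e^{n(-R+\epsilon)}$, so
\[
\#\{v\in T:|v|=n,\,\boldsymbol{\mu}_v\in G\}\ \ge\ W_n^{a,G}\cdot e^{n(R-\epsilon)}.\qquad(\ast)
\]

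\textbf{Part (i).} The hypothesis $R>0$ is exactly the criterion $\sum\nu^a\log a<0$ of Lemma~\ref{L:spinal}(ii), so $(W_n^a)$ is uniformly integrable under $\P_0$ and converges a.s.\ and in $L^1$ to $W_\infty^a$; the branching-property argument used in the proof of Theorem~\ref{T0}(iii) (one checks $R>0$ implies $\sum_{k\in\mathrm{supp}(\rho)}k\nu(k)>1$, so the extinction equation for $W_\infty^a$ has a fixed point $<1$) then gives $\{W_\infty^a>0\}$ coincides with the survival event $\P_0$-a.s. The gap $0\le W_n^a-W_n^{a,G}$ has $\P_0$-expectation $\le e^{-\delta n}$, so Markov together with Borel--Cantelli force $W_n^a-W_n^{a,G}\to 0$ $\P_0$-a.s. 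Hence $W_n^{a,G}\to W_\infty^a>0$ on survival, and inserting into $(\ast)$, taking $n^{-1}\log$, and letting $\epsilon\downarrow 0$ finishes (i).

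\textbf{Part (ii).} Here $R\le\log m_{\boldsymbol{\nu}}<0$, the martingale $W_n^a$ is no longer uniformly integrable, and the route of (i) breaks. I would switch to the classical Kesten--Stigum spine $\hat a\equiv 1/m_{\boldsymbol{\nu}}$, so $\boldsymbol{\nu}^{\hat a}=\bar{\boldsymbol{\nu}}_0$ and $W_n^{\hat a}=N_n/m_{\boldsymbol{\nu}}^n$ with $N_n=\#\{|v|=n\}$. Write $\bar{\P}_0:=\P_0^{\hat a}$, $A_n:=\{\exists v\in T:|v|=n,\,\boldsymbol{\mu}_v\in G\}$, and $B_n:=\{\boldsymbol{\mu}_{\hat V_n}\in G\}$. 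On $B_n$ the spine vertex $\hat V_n$ witnesses $A_n$, and since $A_n\subset\{W_n^{\hat a}>0\}$ the change of measure gives
\[
\P_0(A_n)=\bar{\E}_0\!\left[\mathbf{1}_{A_n}/W_n^{\hat a}\right]\ \ge\ m_{\boldsymbol{\nu}}^n\,\bar{\E}_0\!\left[\mathbf{1}_{B_n}/N_n\right].
\]
By Cauchy--Schwarz, $\bar{\E}_0[\mathbf{1}_{B_n}/N_n]\ge\bar{\P}_0(B_n)^2/\bar{\E}_0[\mathbf{1}_{B_n}N_n]$, and the spinal decomposition yields
\[
\bar{\E}_0[N_n\mid\hat V]=1+\sum_{k=0}^{n-1}(d(\hat V_k)-1)\,m_{\boldsymbol{\nu}}^{n-k-1}\ \le\ 1+\frac{\max S-1}{1-m_{\boldsymbol{\nu}}}=:C_{\boldsymbol{\nu}}<\infty,
\]
using $m_{\boldsymbol{\nu}}<1$ and the finiteness of $S$. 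Hence $\bar{\E}_0[\mathbf{1}_{B_n}N_n]\le C_{\boldsymbol{\nu}}\bar{\P}_0(B_n)$. Combined with the Sanov lower bound $\bar{\P}_0(B_n)\ge\exp(-nH(\boldsymbol{\rho}|\bar{\boldsymbol{\nu}}_0)-o(n))$ for the iid $\bar{\boldsymbol{\nu}}_0$-distributed spine degrees, and with identity \eqref{eqn:remark} rewriting $\log m_{\boldsymbol{\nu}}-H(\boldsymbol{\rho}|\bar{\boldsymbol{\nu}}_0)=R$, one obtains $\P_0(A_n)\ge C_{\boldsymbol{\nu}}^{-1}e^{nR-o(n)}$, which yields the claim.

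\textbf{Main obstacles.} In (i), the upgrade from $L^1$-smallness to almost-sure smallness of $W_n^a-W_n^{a,G}$, which works only because Sanov provides an exponential (hence summable) rate. In (ii), recognising that the right tilting is the Kesten--Stigum spine rather than the $\boldsymbol{\rho}$-targeted spine of (i), and realising that $m_{\boldsymbol{\nu}}<1$ together with the finiteness of $S$ is exactly what keeps $\bar{\E}_0[N_n\mid\hat V]$ bounded in $n$ and so makes the Cauchy--Schwarz step lose only a constant factor rather than an exponential one.
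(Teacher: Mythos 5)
Your Part~(ii) is essentially the paper's argument, just differently packaged: the paper applies the second-moment method to $\#\{v:|v|=n,\ \boldsymbol{\mu}_v\in G\}$, controlling the second moment via the spine and the split time $|v\wedge U_n|$, whereas you apply Cauchy--Schwarz to $\indset{B_n}/N_n$; the key estimate $\bar{\E}_0[N_n\mid\hat V]\leq C_{\boldsymbol{\nu}}$ and the role of $m_{\boldsymbol{\nu}}<1$ together with the finiteness of $S$ are the same in both. This part is correct.

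Part~(i) takes a genuinely different route from the paper (which builds a supercritical Galton--Watson tree embedded in $T$ over blocks of $K$ generations, following Biggins, and concludes with a $0$--$1$ law for hereditary events), but your argument has a genuine gap. The claim that $\{W_\infty^a>0\}$ coincides with the survival event $\P_0$-a.s.\ is false whenever $\operatorname{supp}(\boldsymbol{\rho})\subsetneq S\setminus\{0\}$: with $a(k)=\rho(k)/(k\nu(k))$ one has $a(k)=0$ for every $k\in S$ with $\rho(k)=0$, and then $\{W_\infty^a>0\}$ is the survival event of the Galton--Watson process \emph{pruned} at every vertex whose out-degree falls outside $\operatorname{supp}(\boldsymbol{\rho})$, which has a strictly larger extinction probability than $T$ itself. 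Concretely, take $S=\{1,3\}$, $\nu(1)=\nu(3)=1/2$, $\boldsymbol{\rho}=\boldsymbol{\delta}_3$ (so $R=\ln 3-\log 2>0$): here $a(1)=0$, hence $W_\infty^a=0$ whenever the root has a single child, yet $T$ is a.s.\ infinite. Your observation $\sum_{k\in\operatorname{supp}(\boldsymbol{\rho})}k\nu(k)>1$ only yields $\P_0(W_\infty^a>0)>0$; it does not identify that event with survival. As written, your argument therefore establishes the $\liminf$ lower bound only on the positive-probability event $\{W_\infty^a>0\}$, not $\P_0$-a.s.\ on survival. To close the gap you still need the hereditary $0$--$1$ law for Galton--Watson trees (shrinking $G$ slightly so that the perturbation of $\boldsymbol{\mu}_v$ caused by re-rooting at a deep generation becomes eventually negligible), which is exactly the final step in the paper's proof and which your martingale computation does not replace.
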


\begin{proof}
(i) We prove this result using an analogue reasoning to \cite{Big77b}. Let $B$ be an open (convex) ball centered at $\boldsymbol{\rho}$ with radius $r>0$ sufficiently small so that the ball of same center and  radius $2r$ is contained in $G$. We recall that, by Lemma~\ref{L:spinal} and Sanov's large deviations theorem, we have
\[
  \liminf_{n \to \infty} \frac{1}{n} \log \E_0(\#\{v\in T: |v|=n\, \& \,  \boldsymbol{\mu}_v \in \bar{G}\}) \geq \crochet{\boldsymbol{\rho},\ln} - H(\boldsymbol{\rho}|\boldsymbol{\nu}).
\]
Using that this quantity is positive by our assumptions, we construct a supercritical Galton-Watson tree embedded in $T$ as follows. For all $\epsilon >0$ small enough, we choose $K$ such that
\begin{equation}
  \label{eqnK}
  \frac{1}{K} \log \E_0(\#\{v\in T: |v|=K \, \& \,  \boldsymbol{\mu}_v \in B\}) \geq \crochet{\boldsymbol{\rho},\ln} - H(\boldsymbol{\rho}|\boldsymbol{\nu}) - \epsilon >0
\end{equation}
The first generation of that Galton-Watson tree $T^{(K)}$ is given by $\{ |v|=K : \boldsymbol{\mu}_v \in B \}$, the second generation by the descendant of those with an empirical offspring distribution between times $K$ and $2K$ that belong to $G$, and so on. By \eqref{eqnK}, this Galton-Watson tree is supercritical, with
\[
  \liminf_{p \to \infty} \frac{1}{pK} \#\{v \in T^{(K)} : |v|=p\} \geq \crochet{\boldsymbol{\rho},\ln} - H(\boldsymbol{\rho}|\boldsymbol{\nu}) - \epsilon ,
\]
a.s. on the survival of $T^{(K)}$. Moreover, by convexity of $B$, all individuals $v \in T^{(K)}$ verify that $\boldsymbol{\mu}_v \in B$. Additionally, for all $p>0$ large enough, we have $\boldsymbol{\mu}_{v_p} \in G$. We conclude that
\[
  \P_0\left( \liminf_{n \to \infty} \frac{1}{n} \log \#\{v\in T: |v|=n  \, \& \, \boldsymbol{\mu}_v \in G\} \geq \crochet{\boldsymbol{\rho},\ln} - H(\boldsymbol{\rho}|\boldsymbol{\nu}) \right) >0.
\]
We then use the $0$-$1$ law for Galton-Watson processes, since our event of interest is hereditary, to conclude.

(ii) Without loss of generality again, we assume here that $G$ is an open ball centered at $\boldsymbol{\rho}$. The second moment method yields
\[
  \P_0(\exists v\in T: |v|=n\, \& \,  \boldsymbol{\mu}_v \in G) \geq  \frac{\E_0\left( \#\{v\in T: |v|=n\, \& \,  \boldsymbol{\mu}_v \in G\}\right)^2}{\E_0\left( \#\{v\in T: |v|=n\, \& \, \boldsymbol{\mu}_v \in G\}^2\right)}.
\]
By Lemma~\ref{L:spinal}, we recall that
\begin{align*}
&  \liminf_{n \to \infty} \frac{1}{n} \log \E_0\left( \#\{v\in T: |v|=n\, \& \,  \boldsymbol{\mu}_v \in G\}\right)\\
  &= \log m_{\boldsymbol{\nu}} + \liminf_{n \to \infty} \frac{1}{n} \log \bar{\P}_0(L_n \in G)\\
  &\geq - \inf_{\boldsymbol{\mu} \in G} (H(\boldsymbol{\mu}|\boldsymbol{\nu})  - \crochet{\mu,\ln}),
\end{align*}
with the same notation and computations as in the proof of Theorem~\ref{T0}(i).

On the other hand,  we have
\begin{align*}
 &\phantom{=}\E_0\left( \#\{v\in T: |v|=n\, \& \,  \boldsymbol{\mu}_v \in G\}^2\right)\\
  &= m_{\boldsymbol{\nu}}^n \bar{\E}_0 \left( \#\{v\in T: |v|=n\, \& \, \boldsymbol{\mu}_v \in G\} \ind{L_n \in G}\right)\\
  &=  m_{\boldsymbol{\nu}}^n\sum_{k=0}^{n-1} \bar{\E}_0 \left( \#\{v\in T: |v|=n\, \& \,  \boldsymbol{\mu}_v \in G, |v \wedge U_n| = k\} \ind{L_n \in G} \right)\\
  &\ \ +m_{\boldsymbol{\nu}}^n\P_0(L_n \in G),
\end{align*}
where $|v \wedge U_n|$ represents the time at which $v$ splits from the harmonic line $U_n$. We write $M=\sup S$ for the largest possible number of children of an individual, and  remark that for all $k \leq n$, we have
\[
  \bar{\E}_0 \left( \#\{v\in T: |v|=n\, \& \, \boldsymbol{\mu}_v \in G, |v \wedge U_n| = k\} \ind{L_n \in G} \right) \leq M m_{\boldsymbol{\nu}}^{n-k} \bar{\P}_0(L_n \in G).
\]
Therefore
\[
  \E_0\left( \#\{v\in T: |v|=n\, \& \,  \boldsymbol{\mu}_v \in G\}^2\right) \leq (n+1) M m_{\boldsymbol{\nu}}^n \P_0(L_n \in G).
\]
As a result, another call to Sanov's large deviations theorem yields
\[
  \limsup_{n \to \infty} \frac{1}{n} \log  \E_0\left( \#\{v\in T: |v|=n\, \& \, \boldsymbol{\mu}_v \in G\}^2\right) \leq - \inf_{\boldsymbol{\mu} \in \bar{G}} (H(\boldsymbol{\mu}|\boldsymbol{\nu}) - \crochet{\boldsymbol{\mu},\ln}).
\]

We conclude by continuity of $\boldsymbol{\mu} \mapsto H(\boldsymbol{\mu}|\boldsymbol{\nu}) - \crochet{\boldsymbol{\mu},\ln}$ that
\begin{align*}
  \liminf_{n \to \infty} \frac{1}{n} \log  \P_0(\exists v\in T: |v|=n\, \& \,  \boldsymbol{\mu}_v \in G)
  &\geq - \inf_{\boldsymbol{\mu} \in G} H(\boldsymbol{\mu}|\boldsymbol{\nu}) - \crochet{\boldsymbol{\mu},\ln}\\
  &\geq \crochet{\boldsymbol{\rho},\ln} - H(\boldsymbol{\rho}|\boldsymbol{\nu}). \qedhere
\end{align*}
\end{proof}

\subsection{Weak persistence in a two-type Galton-Watson tree}
\label{sec:multitype}
The purpose of this section simply to exhibit natural examples of random trees in which there exist weakly but not strongly persistent laws for the empirical offspring distributions. We stress that we do not aim to obtain either a general result or optimal conditions.

Let $\boldsymbol{\nu}$, $\boldsymbol{\nu}'$ be two probability distributions on $\Z_+$ with finite support and such that
\begin{equation}
  \label{eqn:simplifyingAssumption}
  m_{\boldsymbol{\nu}} = \sum k \nu(k)  > 1 > m_{\boldsymbol{\nu}'} = \sum k \nu'(k).
\end{equation}
So a $\boldsymbol{\nu}$-Galton-Watson tree is infinite with positive probability, while a $\boldsymbol{\nu}'$-Galton-Watson tree is a.s. finite.
We consider a two-type Galton-Watson process, where
 each individual has a type in $\{1,2\}$, and given its type, reproduces independently from the rest of the population. We suppose that individuals of type $1$ reproduce by creating a random number of children of type $1$ with law $\boldsymbol{\nu}$, as well as one child of type $2$. Particles of type $2$ only create children of type $2$, making a random number with law $\boldsymbol{\nu}'$.

Remarking that the total number of children of each individual of type $1$ is given by $k+1$ for some $k$ such that $\nu(k) >0$, we write
\[
  \bar{S} = \{k \geq 2 : \nu(k-1)> 0\} \cup \{k \geq 1 : \nu'(k)>0\},
\]
with $S$ the support of $\boldsymbol{\nu}$. We also define the operator $\tau$ associating to a probability measure $\boldsymbol{\rho}$ its pushforward by $k \mapsto k-1$. We prove in this section the following result.
\begin{proposition}
\label{prop}
With the two-types tree $T$ defined above, a law $\boldsymbol{\rho} \in \mathcal{P}_{\bar{S}}$ is
\begin{enumerate}
  \item[(i)] \textbf{strongly persistent} if $\crochet{\tau \boldsymbol{\rho},\ln} > H(\tau \boldsymbol{\rho}|\boldsymbol{\nu})$;
  \item[(ii)] \textbf{weakly persistent} if there exists $s \in (0,1)$ and $\boldsymbol{\mu}, \boldsymbol{\mu}'$ with
 $$
    \boldsymbol{\rho} = s \boldsymbol{\mu} + (1-s) \boldsymbol{\mu}',$$
  such that the following two inequalities hold:
$$ \crochet{\tau \boldsymbol{\mu},\ln}> H(\tau \boldsymbol{\mu}|\boldsymbol{\nu})$$
and
$$
  s\crochet{\tau \boldsymbol{\mu},\ln} + (1-s) \crochet{\boldsymbol{\mu}',\ln}  >  s H(\tau \boldsymbol{\mu}|\boldsymbol{\nu}) + (1-s) H(\boldsymbol{\mu}'|\boldsymbol{\nu}').
$$
\end{enumerate}
\end{proposition}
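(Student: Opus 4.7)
My plan rests on decomposing $T$ into its type-1 subtree $T^{(1)}$ and the independent excursions into type-2 territory. Observe that $T^{(1)}$ is a standard $\boldsymbol{\nu}$-Galton--Watson tree (supercritical by \eqref{eqn:simplifyingAssumption}) and that, conditionally on $T^{(1)}$, each type-1 vertex carries a unique type-2 child from which hangs an independent $\boldsymbol{\nu}'$-Galton--Watson subtree (each subcritical, hence a.s. finite). Crucially, for any vertex $v$ whose whole lineage stays in $T^{(1)}$, each ancestor has one extra child in $T$ (the type-2 sibling), so the empirical offspring distributions in the two trees are related by $\tau\boldsymbol{\mu}_v = \boldsymbol{\mu}_v^{(1)}$.

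For (i), the hypothesis $\langle\tau\boldsymbol{\rho},\ln\rangle > H(\tau\boldsymbol{\rho}|\boldsymbol{\nu})$ is exactly the condition of Theorem~\ref{T0}(iii) applied to the supercritical tree $T^{(1)}$ with target measure $\tau\boldsymbol{\rho}$. This produces, a.s. on survival of $T^{(1)}$, an infinite lineage $(v_n)$ in $T^{(1)}$ along which $\boldsymbol{\mu}_{v_n}^{(1)} \to \tau\boldsymbol{\rho}$. Since $\tau$ is a continuous bijection of the relevant simplices, this is equivalent to $\boldsymbol{\mu}_{v_n} \to \boldsymbol{\rho}$ in $T$, giving strong persistence.

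For (ii), I would construct, at each large $n$, a vertex at generation $n$ whose empirical distribution is arbitrarily close to $\boldsymbol{\rho}$, by concatenating a type-1 descent of length $\lfloor sn\rfloor$ with empirical close to $\boldsymbol{\mu}$, followed by a descent of length $\lceil(1-s)n\rceil$ inside one of the $\boldsymbol{\nu}'$-subtrees with empirical close to $\boldsymbol{\mu}'$. Lemma~\ref{lem}(i) applied to $T^{(1)}$ and the first inequality $\langle\tau\boldsymbol{\mu},\ln\rangle > H(\tau\boldsymbol{\mu}|\boldsymbol{\nu})$ provide, a.s. on survival of $T^{(1)}$, at least
$$N_n \geq \exp\bigl(sn(\langle\tau\boldsymbol{\mu},\ln\rangle - H(\tau\boldsymbol{\mu}|\boldsymbol{\nu})) - \epsilon n\bigr)$$
eligible type-1 candidates at generation $\lfloor sn\rfloor$. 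Each hosts an independent $\boldsymbol{\nu}'$-subtree rooted at its type-2 child, and by Lemma~\ref{lem}(ii), each such subtree contains a vertex at depth $\lceil(1-s)n\rceil$ with empirical close to $\boldsymbol{\mu}'$ with probability at least
$$p_n \geq \exp\bigl((1-s)n(\langle\boldsymbol{\mu}',\ln\rangle - H(\boldsymbol{\mu}'|\boldsymbol{\nu}')) - \epsilon n\bigr).$$
Conditional on $T^{(1)}$, these subtrees are i.i.d., so the probability of at least one success is bounded below by $1-\exp(-N_n p_n)$. The second inequality of (ii) asserts precisely $N_n p_n\to\infty$, so such a vertex exists with probability tending to one, and its composite empirical distribution tends to $s\boldsymbol{\mu}+(1-s)\boldsymbol{\mu}'=\boldsymbol{\rho}$.

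The main obstacle will be upgrading these per-scale, high-probability statements into an almost-sure weak persistence claim across all generations. I would address it by restricting to a sparse geometric subsequence $n_k$ for which the invoked type-2 subtrees live in disjoint portions of $T$, thereby rendering the corresponding success events genuinely independent; the second Borel--Cantelli lemma then yields infinitely many successful generations, and a zero-one law exploiting the branching structure of $T^{(1)}$ (in the spirit of the proof of Lemma~\ref{lem}(i)) promotes the conclusion to hold a.s. on the survival event.
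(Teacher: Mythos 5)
Your decomposition of $T$ into the type-1 subtree $T^{(1)}$ together with the independent $\boldsymbol{\nu}'$-excursions is exactly the paper's, as is the translation $\tau\boldsymbol{\mu}_v=\boldsymbol{\mu}_v^{(1)}$ and the application of Theorem~\ref{T0}(iii) for part (i). For part (ii), your concatenation of a type-1 descent of length $\lfloor sn\rfloor$ with empirical near $\boldsymbol{\mu}$ (via Lemma~\ref{lem}(i)) with a type-2 descent of length roughly $(1-s)n$ with empirical near $\boldsymbol{\mu}'$ (via Lemma~\ref{lem}(ii)), using the conditional independence of the $\boldsymbol{\nu}'$-subtrees given $T^{(1)}$, is also precisely the paper's construction. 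The only place you diverge is the final Borel--Cantelli step, and there you overcomplicate: you flag passing from ``high probability at each scale $n$'' to an almost-sure claim as the main obstacle and propose to engineer genuine independence along a sparse geometric subsequence, invoke the second Borel--Cantelli lemma, and then a zero-one law. None of that is needed. The bound you already wrote, $N_np_n\geq\exp(\delta n)$ for some $\delta>0$ (supplied by the second inequality of (ii) once $\epsilon$ is taken small enough), shows the conditional failure probability $\exp(-N_np_n)$ is \emph{doubly} exponentially small in $n$, hence summable. The first Borel--Cantelli lemma then gives, with no independence assumption at all, that almost surely on survival of $T^{(1)}$ only finitely many generations fail --- which is the paper's one-line conclusion and is in fact stronger than the ``infinitely often'' statement needed for weak persistence. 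So the proposal is correct, but the perceived obstacle in (ii) dissolves once you observe that your own estimate already makes the failure probabilities summable.
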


Using this proposition, one can construct a variety of random trees with weakly --but not strongly-- persistent offspring distributions. For example, if there exists $k \in \N$ such that $\nu'(k)>0$ but $\nu(k-1) = 0$, then any individual $v$ such that $\boldsymbol{\mu}_v(k) >0$ must be of type $2$. However, by \eqref{eqn:simplifyingAssumption}, almost surely there is no infinite line of individuals of type $2$. Consequently, any law $\boldsymbol{\rho}$ such that $\rho(k)>0$ cannot be strongly persistent.

\begin{proof}[Proof of Proposition~\ref{prop}]
(i)
This is an immediate consequence of Theorem~\ref{T0}, using that the subtree $T^{(1)}$ of $T$, consisting of only individuals of type 1 is a Galton-Watson tree with reproduction law $\boldsymbol{\nu}$, and the empirical offspring distribution of $v \in T^{(1)}$ verifies $\boldsymbol{\mu}^{(1)}_v = \tau \boldsymbol{\mu}_v$, with $\boldsymbol{\mu}^{(1)}_v$ representing the empirical offspring distribution of $v$ as an element of $T^{(1)}$.

(ii)
Let $\boldsymbol{\rho}$, $\boldsymbol{\mu}$, $\boldsymbol{\mu}'$ and $s$ as in the statement. By continuity of $H(\cdot|\boldsymbol{\nu})$ and $H(\cdot|\boldsymbol{\nu'})$, we observe that there exist open neighborhoods $O$, $G$, $G'$ of $\boldsymbol{\rho}$, $\boldsymbol{\mu}$, $\boldsymbol{\mu}'$ respectively, such that for all $(\boldsymbol{\xi}, \boldsymbol{\xi}') \in G \times G'$, $s \boldsymbol{\xi} + (1-s) \boldsymbol{\xi}' \in O$. We fix $\epsilon > 0$ such that
\begin{align*}
 &\crochet{\tau \boldsymbol{\mu},\ln} - H(\tau \boldsymbol{\mu}|\boldsymbol{\nu}) - \epsilon > 0\\
 \text{ and } &s\crochet{\tau \boldsymbol{\mu},\ln} + (1-s) \crochet{\boldsymbol{\mu}',\ln}  - \left( s H(\tau \boldsymbol{\mu}|\boldsymbol{\nu}) + (1-s) H(\boldsymbol{\mu}'|\boldsymbol{\nu}')\right) - 2\epsilon >0.
\end{align*}

By Lemma~\ref{lem}(i), we observe that almost surely on the survival event, for all $n$ large enough, there will be at least
\[
  \exp\left( s n \left(\crochet{\tau \boldsymbol{\mu},\ln} - H(\tau \boldsymbol{\mu}|\boldsymbol{\nu}) - \epsilon \right) \right)
\]
individuals $v$ of type $1$ at generation $\floor{ns}$ with empirical offspring distribution $\boldsymbol{\mu}_v \in G$. In addition, by Lemma~\ref{lem}(ii), each of these individuals will give birth to one individual of type $2$, that has probability at least
\[
  \exp\left( (1-s) n \left(\crochet{\boldsymbol{\mu}',\ln} - H(\boldsymbol{\mu}'|\boldsymbol{\nu}') - \epsilon \right)  \right)
\]
to start a Galton-Watson tree of reproduction law $\boldsymbol{\nu}'$ creating at least one descendant at generation $n - \floor{ns} -1$ with an empirical offspring distribution in $G'$. Then, using Borel-Cantelli lemma, we conclude that almost surely on the survival event, for all $n$ large enough, there will be an individual $|v|=n$ in $T$ with an empirical offspring distribution in $O$, completing the proof.
\end{proof}

\section{Reinforced sampling} \label{S:Sanovreinforced}
We return here to the setting of Section \ref{S:LDT}, where the notation $\P_0$  was used for a probability measure under which the $\xi_i$ are i.i.d. with law  $\boldsymbol{\nu}$. The dynamics of reinforced Galton--Watson processes that have been depicted in the Introduction incite us to define another probability measure  denoted by $\P_q$, where $q\in(0,1)$ is the memory parameter, under which the sequence $(\xi_i)$ rather describes balls added successively to an urn in a P\'olya type  process.

Namely, we view $S$ as a set of colors and imagine an urn containing colored balls. In some probability space $(\Omega, \mathcal A, \P_q)$,  we pick a ball uniformly at random in the urn, observe its color and return it to the urn together with a new ball. The color of the new ball is the same as that just sampled with probability $q$, and with complementary probability $1-q$, its color is rather chosen randomly according to $\boldsymbol{\nu}$. We then iterate independently,  supposing for definitiveness that at time $n=1$, we add to an initially empty urn a single ball with random color $\xi_1$ distributed according to $\boldsymbol{\nu}$. We write $\xi_n$ for the color of the  $n$-th ball added to the urn. It should be plain that under $\P_q$,  each variable $\xi_i$ has again the law  $\boldsymbol{\nu}$; however the sequence with memory $(\xi_i)_{i\geq 1}$ is not i.i.d., nor Markovian, nor even stationary.

Just as in the previous section, we denote   by $(L_n)_{n\geq 1}$ the sequence of empirical measures induced by the sequence of colors $(\xi_i)_{i\geq 1}$. Perhaps the most basic result   in the reinforced setting is that the Law of Large Numbers remains valid, namely, $L_n$ converges to $\boldsymbol{\nu}$ as $n\to \infty$,  $\P_q$-a.s. This is indeed a special case of a much more general result for random urn schemes, see e.g. \cite{AK} or \cite{Jan04}. The main purpose of this section is to investigate the effect of reinforcement on Sanov's Theorem and some of its consequences.
On our way, we briefly study a function that has a crucial role in the analysis.

 \subsection{Reinforced  Sanov's theorem}

Budhiraja and Waterbury \cite{BW} have established the LDP for a more general family of reinforced chains on a finite alphabet, in which the rate function is given in the form of an optimization problem.
In the present setting, their main result  can be stated as follows.

\begin{theorem}[After Budhiraja and Waterbury \cite{BW}] \label{T:LDPBW}
 The sequence of empirical measures $(L_n)$ satisfies the LDP  under $\P_q$ with  rate function
$I_q: \mathcal P_{S}\to \R_+$ given by
\[
  I_q(\boldsymbol{\rho}) = \inf_{\boldsymbol{\eta} \in {\mathcal{U}}(\boldsymbol{\rho})} \int_0^1 H(\boldsymbol{\eta}_s | q \boldsymbol{\psi}_s + (1-q)\boldsymbol{\nu}) \dd s,
\]
where for all $t \in (0,1]$
\begin{equation} \label{E:vareil}
\boldsymbol{\psi}_t = \frac{1}{t} \int_0^t \boldsymbol{\eta}_s \dd s,
\end{equation}
and ${\mathcal U} (\boldsymbol{\rho})$ denotes the collection of controls $\boldsymbol{\eta}=(\boldsymbol{\eta}_s)_{s \in [0,1]}$ in $\mathcal P_{S}$ such that $\boldsymbol{\psi}_1 = \int_0^1 \boldsymbol{\eta}_s \dd s = \boldsymbol{\rho}$.
\end{theorem}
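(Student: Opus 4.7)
The plan is to recognize $(\xi_i)_{i\geq 1}$ under $\P_q$ as a specific instance of the path-dependent reinforced chains studied in \cite{BW} and to translate their general LDP into the announced variational form. The key observation is that under $\P_q$ the conditional law of $\xi_{n+1}$ given the past is
\[
\P_q(\xi_{n+1}=k\mid \xi_1,\ldots,\xi_n)= q\, L_n(k) + (1-q)\nu(k),
\]
so the evolution depends on $\xi_1,\ldots,\xi_n$ only through the current empirical measure $L_n$. This places us exactly in the Budhiraja--Waterbury framework, with the finite state space $S$ and the transition kernel $K(\boldsymbol{\mu},\cdot)=q\boldsymbol{\mu}+(1-q)\boldsymbol{\nu}$, which is linear (hence Lipschitz) in $\boldsymbol{\mu}$ and satisfies the non-degeneracy bound $K(\boldsymbol{\mu},k)\geq (1-q)\nu(k)>0$ for every $k\in S$.

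First I would verify the standing hypotheses of \cite{BW}: the finiteness of $S$ together with the continuity and positivity of $K$ yields tightness of the sequence of controlled empirical-measure processes and the applicability of the Laplace principle through the stochastic-control / weak-convergence method. Their main theorem then provides matching LDP upper and lower bounds for $(L_n)$ under $\P_q$, together with a variational representation of the rate function.

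Next I would interpret this representation in the form stated in the theorem. An admissible control is a measurable path $(\boldsymbol{\eta}_s)_{s\in[0,1]}$ in $\mathcal P_S$, where $\boldsymbol{\eta}_s$ plays the role of the infinitesimal ``true'' sampling law at relative time $s$ along a large-deviation trajectory, and the time-averaged measure $\boldsymbol{\psi}_s = s^{-1}\int_0^s\boldsymbol{\eta}_u\,\dd u$ is the corresponding scaling limit of the empirical measure up to time $s$. Under the reinforced dynamics, the ``natural'' sampling law at relative time $s$ would be $q\boldsymbol{\psi}_s+(1-q)\boldsymbol{\nu}$, so the instantaneous cost of following $\boldsymbol{\eta}_s$ instead is the relative entropy $H(\boldsymbol{\eta}_s\mid q\boldsymbol{\psi}_s+(1-q)\boldsymbol{\nu})$; integrating in $s$ gives the total cost. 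The terminal constraint $L_n\to\boldsymbol{\rho}$ becomes $\boldsymbol{\psi}_1=\boldsymbol{\rho}$, which defines $\mathcal U(\boldsymbol{\rho})$ and produces the announced formula for $I_q$.

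The main obstacle is essentially bookkeeping: \cite{BW} is stated in greater generality (an abstract Polish-space setting with somewhat different time and control normalizations), so one must carefully match their parametrization to the streamlined formulation used here. Once this identification is made, lower semi-continuity of $I_q$ and both LDP bounds follow immediately from their result. A useful sanity check is that at $q=0$ the cost reduces to $\int_0^1 H(\boldsymbol{\eta}_s\mid\boldsymbol{\nu})\,\dd s$, and by convexity of $H(\cdot\mid\boldsymbol{\nu})$ and Jensen's inequality the infimum under $\int_0^1\boldsymbol{\eta}_s\,\dd s=\boldsymbol{\rho}$ is attained at the constant control $\boldsymbol{\eta}_s\equiv\boldsymbol{\rho}$, recovering the classical Sanov rate function \eqref{eqn:identifRateFuction}.
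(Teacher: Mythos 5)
You take essentially the same route as the paper: identify $(\xi_i)$ under $\P_q$ as a reinforced chain in the Budhiraja--Waterbury framework, invoke their general LDP, and present the rate function in the announced finite-horizon form. The identification of the transition kernel $K(\boldsymbol{\mu},\cdot)=q\boldsymbol{\mu}+(1-q)\boldsymbol{\nu}$ and the verification of their hypotheses (finite alphabet, linearity, uniform ellipticity from $(1-q)\nu>0$) are both correct, as is the $q=0$ sanity check.

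However, there is a genuine gap precisely where you write that the remaining work is ``essentially bookkeeping.'' The result you cite from \cite{BW} does not give the formula in the theorem: \cite[Theorem 2]{BW} expresses the rate function as an infinite-horizon discounted cost
\[
  I_q(\boldsymbol{\rho}) = \inf_{\boldsymbol{\eta} \in \mathcal V (\boldsymbol{\rho})} \int_0^{\infty} \e^{-s}\, H\bigl( \boldsymbol{\eta}_s \,\big|\, q \boldsymbol{\phi}_s + (1-q) \boldsymbol{\nu}\bigr) \dd s,
\]
where $\boldsymbol{\phi}$ solves the linear ODE $\boldsymbol{\phi}'_t=\boldsymbol{\phi}_t-\boldsymbol{\eta}_t$ with $\boldsymbol{\phi}_0=\boldsymbol{\rho}$ and the admissibility constraint is that $\boldsymbol{\phi}_t\in\mathcal P_S$ for all $t$. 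Converting this to the stated form — with the finite horizon $[0,1]$, the running average $\boldsymbol{\psi}_t = t^{-1}\int_0^t\boldsymbol{\eta}_s\,\dd s$, and the terminal constraint $\boldsymbol{\psi}_1=\boldsymbol{\rho}$ — is the actual content of the proof. Concretely, one must change variables $s\mapsto 1-\e^{-s}$, solve the ODE explicitly via an integrating factor (using the admissibility constraint to pin down $\boldsymbol{\rho}=\int_0^\infty \e^{-s}\boldsymbol{\eta}_s\,\dd s$), and then reverse time to turn ``average over $[t,1]$'' into ``average over $[0,t]$.'' Your heuristic reading of $\boldsymbol{\psi}_s$ as the scaling limit of $L_{\lfloor sn\rfloor}$ is a reasonable intuition for why the finite-horizon formula ought to be the right one, but it does not establish that the two optimization problems have the same value. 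Without carrying out that calculation your argument asserts, rather than proves, the identity in the theorem.
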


\begin{proof} Specializing \cite[Theorem 2]{BW} gives the LDP for $(L_n)$ with  rate function
$$I_q(\boldsymbol{\rho}) \coloneqq \inf_{\boldsymbol{\eta} \in \mathcal V (\boldsymbol{\rho})} \int_0^{\infty} \e^{-s} H( \boldsymbol{\eta}_s | q \boldsymbol{\phi}_s + (1-q) \boldsymbol{\nu}) \dd s,
$$
where
\begin{equation} \label{E:eil}
\boldsymbol{\phi}_t = \boldsymbol{\rho}-\int_0^t \boldsymbol{\eta}_s \dd s + \int_0^t \boldsymbol{\phi}_s \dd s,
\end{equation}
and
$ \mathcal V (\boldsymbol{\rho})$ denotes the collection of controls $\boldsymbol{\eta}=(\boldsymbol{\eta}_s)_{s\geq 0}$ in $\mathcal P_{S}$ such that  $\boldsymbol{\phi}_t\in \mathcal P_{S}$  for all $t\geq 0$.
We then perform the simple change of variables $s \mapsto 1-\e^{-s}$ and get
\begin{align*}
 & \int_0^{\infty} \e^{-s} H( \boldsymbol{\eta}_s | q \boldsymbol{\phi}_s + (1-q) \boldsymbol{\nu}) \dd s\\
 & = \int_0^1 H(\boldsymbol{\eta}_{-\log (1-s)} | q \boldsymbol{\phi}_{-\log(1-s)} + (1-q) \boldsymbol{\nu}) \dd s.
\end{align*}
On the other hand, we remark that if $\boldsymbol{\eta} \in \mathcal{V}(\boldsymbol{\rho})$, then the solution to \eqref{E:eil} is
\[
  \boldsymbol{\phi}_{-\log(1-t)} = \frac{1}{1-t}\left( \boldsymbol{\rho}-\int_0^t \boldsymbol{\eta}_{-\log(1-s)} \dd s\right) = \frac{1}{1-t} \int_t^1 \boldsymbol{\eta}_{-\log (1-s)} \dd s,
\]
where for the second equality, we used that
$$\lim_{t \to \infty} \e^{-t} \boldsymbol{\phi}_t = 0 = \boldsymbol{\rho} - \int_0^\infty e^{-s} \boldsymbol{\eta}_s \dd s.$$ Then, writing $\bar{\boldsymbol{\eta}}_t = \boldsymbol{\eta}_{-\log t}$ and $\boldsymbol{\psi}_t = \boldsymbol{\phi}_{-\log t}$, we obtain the alternative formula for $I_q$ of the statement, remarking that $\mathcal{P}(S)$ is convex, therefore for any possible control, $\frac{1}{t} \int_0^t \bar{\boldsymbol{\eta}}_s \dd s \in \mathcal{P}(S)$ .
\end{proof}

Solving explicitly the optimization problem in Theorem \ref{T:LDPBW} to get an explicit expression for the rate function $I_q$ does not seem easy, and
the purpose of the next section is to offer an alternative characterization which will be much simpler to analyze.  Before this, we point at the following upper-bound
which may be quite sharp, at least in simple cases, as Example~\ref{E:es} below suggests.

\begin{corollary} \label{C:IBW}
For every $\boldsymbol{\rho} \in \mathcal P_{S}$, there is  the  upper-bound
\begin{equation}\label{E:IBW}
I_q(\boldsymbol{\rho})\leq H(\boldsymbol{\rho} | q\boldsymbol{\rho} + (1-q) \boldsymbol{\nu}).
\end{equation}
Moreover, if $\boldsymbol{\rho}$ is equivalent to $\boldsymbol{\nu}$, that is, $\rho(k)>0$ for all $k\in S$, and if  $\boldsymbol{\rho}\neq \boldsymbol{\nu}$,  then the inequality \eqref{E:IBW} is strict.
\end{corollary}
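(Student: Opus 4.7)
For the upper bound, the strategy is to use the constant control. Take $\boldsymbol{\eta}_s \equiv \boldsymbol{\rho}$ for all $s \in [0,1]$; this obviously belongs to $\mathcal{U}(\boldsymbol{\rho})$, and the associated running average \eqref{E:vareil} is $\boldsymbol{\psi}_s \equiv \boldsymbol{\rho}$ on $(0,1]$. The integrand in Theorem~\ref{T:LDPBW} is then the constant $H(\boldsymbol{\rho} \mid q\boldsymbol{\rho} + (1-q)\boldsymbol{\nu})$, which yields \eqref{E:IBW} immediately.

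To establish strictness when $\boldsymbol{\rho}$ is equivalent to $\boldsymbol{\nu}$ and $\boldsymbol{\rho} \neq \boldsymbol{\nu}$, the plan is to show that the constant control is not stationary for the cost functional by exhibiting a perturbation along which its value strictly decreases at first order. Consider the family $\boldsymbol{\eta}^\epsilon_s = \boldsymbol{\rho} + \epsilon g(s)\boldsymbol{a}$, where $\boldsymbol{a} \in \R^S$ satisfies $\sum_k a(k) = 0$ and $g \colon [0,1] \to \R$ is bounded with $\int_0^1 g(s)\,\dd s = 0$. For $|\epsilon|$ small, the positivity assumption $\rho(k) > 0$ for all $k \in S$ guarantees that $\boldsymbol{\eta}^\epsilon_s \in \mathcal P_S$, and the two integral constraints ensure $\boldsymbol{\eta}^\epsilon \in \mathcal{U}(\boldsymbol{\rho})$. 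Writing $\boldsymbol{m} = q\boldsymbol{\rho} + (1-q)\boldsymbol{\nu}$, differentiating the relative entropy, and using Fubini's theorem to handle the running average $\bar{g}(s) = \frac{1}{s}\int_0^s g(u)\,\dd u$, the constraints $\sum_k a(k) = 0$ and $\int_0^1 g(s)\,\dd s = 0$ kill the contributions coming from $\log(\rho(k)/m(k))$ and from the ``$+1$'' in the derivative of $x\log x$. The first variation thus reduces to the product
\[
\frac{\dd}{\dd\epsilon}\bigg|_{\epsilon = 0} \int_0^1 H(\boldsymbol{\eta}^\epsilon_s \mid q\boldsymbol{\psi}^\epsilon_s + (1-q)\boldsymbol{\nu})\,\dd s \;=\; q\left(\int_0^1 g(u)\log u\,\dd u\right)\left(\sum_{k \in S}\frac{\rho(k)}{m(k)}\,a(k)\right).
\]

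Finally, it remains to verify that $\boldsymbol{a}$ and $g$ can be chosen so that both factors are nonzero. The explicit choice $g(u) = u - 1/2$ gives $\int_0^1 g(u)\log u\,\dd u = 1/4 \neq 0$. For the sum, if $k \mapsto \rho(k)/m(k)$ were constant on $S$, summing against $\boldsymbol{\rho}$ and $\boldsymbol{m}$ would force $\boldsymbol{\rho} = \boldsymbol{m}$, and since $q < 1$ this collapses to $\boldsymbol{\rho} = \boldsymbol{\nu}$, contradicting our assumption. Hence $\rho/m$ is non-constant on $S$, so one can pick $\boldsymbol{a}$ with zero sum such that $\sum_k \rho(k) a(k)/m(k)$ is any desired nonzero value; flipping its sign if needed, the first variation is strictly negative, giving $I_q(\boldsymbol{\rho}) < H(\boldsymbol{\rho} \mid q\boldsymbol{\rho} + (1-q)\boldsymbol{\nu})$. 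The only mildly delicate point in this plan is the Fubini interchange and the justification that differentiation under the integral sign is legitimate near $\epsilon = 0$, but both are routine since the integrand is smooth and uniformly bounded on the range of admissible perturbations.
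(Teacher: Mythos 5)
Your upper bound argument is exactly the paper's: plug in the constant control $\boldsymbol{\eta}_s\equiv\boldsymbol{\rho}$ in Theorem~\ref{T:LDPBW}.

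For the strictness, your route is genuinely different from the paper's and, in my view, more informative. The paper restricts attention to perturbations along the single direction $\boldsymbol{\rho}-\boldsymbol{\nu}$: it sets $\boldsymbol{\rho}_x=\boldsymbol{\rho}+x(\boldsymbol{\rho}-\boldsymbol{\nu})$, studies $h(x,y)=H(\boldsymbol{\rho}_x\mid q\boldsymbol{\rho}_y+(1-q)\boldsymbol{\nu})$, proves the sign condition $\partial_y h(0,0)<0$ by a one-line convexity argument with $-\log$, and then plugs a specific two-step piecewise-constant control $\boldsymbol{\eta}_t=\boldsymbol{\rho}_{\pm\varepsilon}$ on the two halves of $[0,1]$ and Taylor-expands to see the cost drops by $|g_2|\varepsilon\log 2+O(\varepsilon^2)$. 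You instead compute the full first variation of the cost functional at the constant control over the general admissible perturbation $\boldsymbol{\eta}^\epsilon_s=\boldsymbol{\rho}+\epsilon g(s)\boldsymbol{a}$ (zero-sum $\boldsymbol{a}$, zero-mean $g$), and find it factors cleanly as $q\bigl(\int_0^1 g(u)\log u\,\dd u\bigr)\bigl(\sum_k \rho(k)a(k)/m(k)\bigr)$; I checked the calculation (Fubini on the running average $\bar g$ produces the $\log u$ weight, the zero-sum and zero-mean constraints kill the other terms) and it is correct, as are your explicit choices $g(u)=u-1/2$ and the argument that $\rho/m$ non-constant (equivalent to $\boldsymbol{\rho}\neq\boldsymbol{\nu}$ after normalization) gives a nonzero second factor. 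What the paper's approach buys is brevity and no differentiation under the integral to justify; what your approach buys is a transparent formula making visible the mechanism the paper only remarks on informally in Remark~\ref{R:nonLine}, namely that the $\log u$ weight quantifies how deviations of the control near $s=0$ are cheaper than deviations later. The only points you flagged as routine (smoothness, uniform bounds, Fubini) are indeed routine here since $\rho(k)>0$ and $m(k)\geq(1-q)\nu(k)>0$ for all $k\in S$.
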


\begin{proof} The first claim is immediate from Theorem \ref{T:LDPBW}.
Indeed, for the constant control $\boldsymbol{\eta}_s\equiv  \boldsymbol{\rho}$, we have $\boldsymbol{\psi}_t \equiv  \boldsymbol{\rho}$, and \eqref{E:IBW} follows.

Suppose now that $\boldsymbol{\rho}\neq \boldsymbol{\nu}$ and that $\rho(k)>0$ for all $k\in S$. Set
$$\boldsymbol{\rho}_x\coloneqq \boldsymbol{\rho}+ x (\boldsymbol{\rho}-\boldsymbol{\nu})\quad \text{ for }x\in \R,$$ so $\boldsymbol{\rho}_x \in \mathcal P_{S}$ provided that $|x|$ is small enough. The function
$$h(x,y)\coloneqq H(\boldsymbol{\rho}_x\mid q \boldsymbol{\rho}_y+(1-q)\boldsymbol{\nu})$$
is $\mathcal{C}^{\infty}$ on some neighborhood of $(0,0)$, and we have
\begin{equation} \label{E:Taylor}
h(x,y)=h(0,0)+ x g_1+y g_2 + O(x^2+y^2),
\end{equation}
where $(g_1, g_2)= \nabla h(0,0)$.

The key observation is that
\begin{equation}\label{E:snpd}
g_2<0.
\end{equation}
Indeed, for $y>0$ sufficiently small, we can express $q \boldsymbol{\rho}_y+(1-q)\boldsymbol{\nu}$ as a convex combination of $q \boldsymbol{\rho}+(1-q)\boldsymbol{\nu}$ and
$\boldsymbol{\rho}$, namely
$$q \boldsymbol{\rho}_y+(1-q)\boldsymbol{\nu}= \left(1-\frac{q y}{1-q}\right) \left( q \boldsymbol{\rho}+(1-q)\boldsymbol{\nu}\right) + \frac{q y}{1-q} \boldsymbol{\rho}.$$
By convexity of the function $-\log$, this yields
$$h(0,y) \leq  \left(1-\frac{q y}{1-q}\right)h(0,0)+ \frac{q y}{1-q}H(\boldsymbol{\rho}\mid \boldsymbol{\rho})=  \left(1-\frac{q y}{1-q}\right)h(0,0),$$
and then
$$\partial_yh(0,0)\leq -\frac{q}{1-q}h(0,0).$$
We have $h(0,0)>0$ as $\boldsymbol{\rho}\neq q \boldsymbol{\rho}+(1-q)\boldsymbol{\nu}$, and  \eqref{E:snpd} holds.

We now return to the setting of Theorem \ref{T:LDPBW} and consider the control $\boldsymbol{\eta}$ given by
$$\boldsymbol{\eta}_t= \left\{ \begin{matrix} \boldsymbol{\rho}_\varepsilon & \text{ if }0\leq t \leq 1/2,\\
\boldsymbol{\rho}_{-\varepsilon} & \text{ if }1/2 < t \leq 1,
\end{matrix} \right.$$
for some sufficiently small $\varepsilon>0$.
We get
$$\boldsymbol{\psi}_t = \left\{ \begin{matrix} \boldsymbol{\rho}_\varepsilon & \text{ if }0\leq t \leq 1/2,\\
\boldsymbol{\rho}_{\varepsilon(1/t-1)} & \text{ if }1/2 < t \leq 1.
\end{matrix} \right.$$
By the Taylor expansion \eqref{E:Taylor}, we deduce
\begin{align*}&
\int_0^1 H(\boldsymbol{\eta}_s | q \boldsymbol{\psi}_s + (1-q)\boldsymbol{\nu}) \dd s \\& = \frac{1}{2} h(\varepsilon, \varepsilon) + \int_{1/2}^1 h(-\varepsilon, \varepsilon(1/t-1))\dd t\\
& = \frac{1}{2}  h(0,0) +\frac{1}{2}(g_1+g_2) \varepsilon +\frac{1}{2}  h(0,0)  - \frac{1}{2} g_1 \varepsilon + g_2 \varepsilon \int_{1/2}^1  (1/t-1)\dd t + O(\varepsilon^2)\\
& = h(0,0) +  g_2 \varepsilon \log 2  +  O(\varepsilon^2).
\end{align*}
Since we know from \eqref{E:snpd} that $g_2<0$, we can choose $\varepsilon>0$ sufficiently small so that the integral in the right-hand side is strictly less than $h(0,0)$.
\end{proof}

\subsection{Fenchel-Legendre identification of the rate function}
\label{sS:FLid}

Using results of \cite{BM1}, we provide an alternative representation of the rate function $I_q$ as the Fenchel-Lengendre transform of a convex function $\Lambda_q : \R^S \to \R_+$, similar to \eqref{eqn:identifRateFuction}. In order to deal later on with probability measures with support strictly included in $S$, it is convenient to allow some coordinates of $\Lambda_q$ to be equal to $-\infty$; recall the convention \eqref{E:conv}. For every $\lambda \in [-\infty, \infty)^{S}$, we set
\begin{equation}
  \label{E:Lambda}
  \Lambda_q(\lambda)\coloneqq \log q - \log\left(\int_0^{\infty}   \prod (1-t\e^{\lambda(k)})_+^{\nu(k)(1-q) /q}\,  \dd t \right),
\end{equation}
where $x_+$ denotes the positive part of a real number $x$ and the product in the integral runs over $k\in S$ (or, equivalently,  over the $k$'s with $\lambda_k>-\infty$). Needless to say, the right-hand side is interpreted as $-\infty$ when $\lambda\equiv -\infty$.

\begin{theorem}
\label{T:RGE}
We have:
\begin{enumerate}
  \item[(i)] For every $\lambda \in [-\infty, \infty)^{S}$, there is the limit
 $$\lim_{n\to \infty} \frac{1}{n} \log \E_q\left( \exp (\langle L_n, n\lambda\rangle) \right) = \Lambda_q(\lambda).$$
  \item[(ii)] As a consequence, the rate function $I_q$  in Theorem \ref{T:LDPBW} can be identified as the Fenchel-Legendre transform $\Lambda^*_q$ of $\Lambda_q$,
 $$I_q(\boldsymbol{\rho})= \Lambda_q^*(\boldsymbol{\rho})\coloneqq \sup_{\lambda\in \R^{S}}\left( \langle \boldsymbol{\rho}, \lambda\rangle -\Lambda_q(\lambda)\right), \qquad \boldsymbol{\rho}\in \mathcal P_S.$$
\end{enumerate}
\end{theorem}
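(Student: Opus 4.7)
My plan is to prove (i) by a direct generating-function computation building on \cite{BM1}, and then to deduce (ii) from (i) together with the LDP of Theorem~\ref{T:LDPBW} via Varadhan's integral lemma and Fenchel--Moreau duality. The key reformulation for (i) is
$$\E_q\!\left[\exp(\langle L_n, n\lambda\rangle)\right] = \E_q\!\left[\prod_{i=1}^n a(\xi_i)\right], \qquad a(k) = \e^{\lambda(k)},$$
so that the quantity of interest is a multiplicative functional along a single reinforced ancestral lineage---precisely the object whose generating function was analyzed in \cite{BM1}.

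For (i), I would decompose the reinforced sequence $(\xi_i)$ according to its innovation/copy structure: each $\xi_i$ is either a fresh draw from $\boldsymbol{\nu}$ or a copy of an earlier $\xi_j$, partitioning $\{1,\ldots,n\}$ into monochromatic families whose common colors are i.i.d.\ with law $\boldsymbol{\nu}$. Taking the expectation first over the colors conditionally on the family sizes, then summing over the P\'olya-type joint law of those sizes, and finally encoding the result as an $n$-th coefficient of a suitable analytic generating function, should reproduce the integral representation of \eqref{E:Lambda}; the announced logarithmic limit then follows from a standard analytic-coefficient / saddle-point asymptotic. When $\lambda(k)=-\infty$ for some $k$, the corresponding color is effectively excluded from the alphabet, and the identity is recovered by monotone passage to the limit from the finite case on the sub-alphabet $\{k:\lambda(k)>-\infty\}$. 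I expect this step to be the main obstacle; the analogous computation for $a(k)=k$ already underlies the average population growth rate of the reinforced Galton--Watson process established in \cite{BM1}, so the task is essentially a careful generalization to arbitrary nonnegative weights.

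For (ii), since $L_n$ takes values in the compact simplex $\mathcal P_S$ and $\boldsymbol{\rho}\mapsto \langle \boldsymbol{\rho},\lambda\rangle$ is continuous and bounded for every $\lambda\in\R^S$, Varadhan's integral lemma applied to the LDP of Theorem~\ref{T:LDPBW} gives
$$\Lambda_q(\lambda) = \lim_{n\to\infty}\frac{1}{n}\log \E_q\!\left[\e^{\langle L_n, n\lambda\rangle}\right] = \sup_{\boldsymbol{\rho}\in\mathcal P_S}\bigl(\langle \boldsymbol{\rho},\lambda\rangle - I_q(\boldsymbol{\rho})\bigr) = I_q^*(\lambda).$$
The rate function $I_q$ is convex---the convex combination of controls $\boldsymbol{\eta}^1\in\mathcal{U}(\boldsymbol{\rho}_1)$ and $\boldsymbol{\eta}^2\in\mathcal{U}(\boldsymbol{\rho}_2)$ lies in $\mathcal{U}(t\boldsymbol{\rho}_1+(1-t)\boldsymbol{\rho}_2)$, and joint convexity of relative entropy in both arguments is inherited by the time integral---and lower semicontinuous as a rate function. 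Hence Fenchel--Moreau yields $I_q = I_q^{**} = \Lambda_q^*$, as required.
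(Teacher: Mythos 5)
Your proposal is correct and, for part (ii), takes a genuinely different route from the paper. For part (i) you essentially agree with the paper: both you and the paper reduce to the moment-generating-function computation of \cite{BM1}, you via the innovation/copy partition of the reinforced sequence and a coefficient-extraction/saddle-point argument, the paper via the continuous-time embedding into a multitype Yule process whose PGF is governed by a system of nonlinear ODEs, followed by singularity analysis. These are two presentations of the same computation, and both defer the hard analysis to \cite{BM1}; your version is a complementary combinatorial description. One small omission in your sketch: the singularity analysis in \cite{BM1} requires the maximum of $\lambda$ to be attained at a unique index, and the paper reduces the degenerate case to the generic one by passing to a quotient alphabet $\tilde S$; you only address the case of coordinates equal to $-\infty$. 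For part (ii) your argument genuinely differs: you combine Varadhan's lemma (applied to the Budhiraja--Waterbury LDP of Theorem~\ref{T:LDPBW} and the compactness of $\mathcal{P}_S$) with (i) to get $\Lambda_q=I_q^*$, then prove convexity of $I_q$ directly from the variational formula — the control $t\boldsymbol{\eta}^1+(1-t)\boldsymbol{\eta}^2$ is admissible for $t\boldsymbol{\rho}_1+(1-t)\boldsymbol{\rho}_2$ and joint convexity of relative entropy passes through the integral — and close via Fenchel--Moreau. The paper instead applies the G\"artner--Ellis theorem to (i) to produce an LDP with rate function $\Lambda_q^*$ and invokes uniqueness of rate functions. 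Both routes are standard; yours has the minor advantage of making the convexity of $I_q$ explicit and self-contained from the variational formula, while the paper's G\"artner--Ellis/uniqueness argument is shorter given the machinery already cited.
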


\begin{proof} (i) {Essentially, this is a weaker (logarithmic)  version of the main result of \cite{BM1}; see also the earlier articles \cite{Fr1, Fr2} in the case $\# S=2$.
Actually, the calculations in \cite{BM1} are mostly done for the special case $\lambda_k=\ln k$, but the method works just as well for arbitrary $\lambda$.
For the reader's convenience, we briefly present below the guiding line from \cite{BM1}, and skip explicit calculations to avoid duplication. }

{To start with, we set $a_k\coloneqq \e^{\lambda_k}$ for $k\in S$, and point at the identities
$$\exp (\langle L_n, n\lambda\rangle)= \prod_{i=1}^{n} a_{\xi_i}=\prod a_k^{\#\{i\leq n: \xi_i=k\}},$$
where the product on the right-hand side runs for $k\in S$.
Next,  introduce  an independent standard Yule process $Y=(Y(t))_{t\geq 0}$ and define  for all $t\geq 0$ and $k\in S$,
$$Y_k(t)\coloneqq \#\{i\leq Y(t): \xi_i=k\}.$$
We argued in \cite[Section 2]{BM1} that $\mathbf{Y}(t)=(Y_k(t))_{k\in S}$ is a multitype Yule process, with space of types $S$.
Since  $Y(t)$ has the geometric distribution with parameter $\e^{-t}$, this yields
\begin{equation}\label{E:pgf}
\e^{-t}\sum_{n=0}^{\infty} (1-\e^{-t})^n\E_q\left( \exp (\langle L_n, n\lambda\rangle)\right)= \E_q\left( \prod a_k^{Y_k(t)}\right);
\end{equation}
see \cite[Lemma 2.1]{BM1}.
 }

{The probability generating function of $\mathbf{Y}(t)$ that appears in the right-hand side of \eqref{E:pgf} has been determined in
\cite[Sections 3 and 4]{BM1} by solving a system of non-linear differential equations; see notably Proposition 4.3 and Corollary 4.4 there. In particular, we can derive the asymptotic behavior of the right-hand side of \eqref{E:pgf} as $t\to \infty$. }

{All what is needed now is to deduce the asymptotic behavior of the moment generating function
$\E_q\left( \exp (\langle L_n, n\lambda\rangle)\right)$ as $n\to \infty$ from that of \eqref{E:pgf} as $t\to \infty$. This is achieved by the singularity analysis performed in \cite[Section 5]{BM1}, under the condition that the maximum of $\lambda$ is reached at a single location.
One gets
$$\lim_{n\to \infty} \exp(-n \Lambda_q(\lambda)) \E_q\left( \exp (\langle L_n, n\lambda\rangle)\right)=c$$
for some explicit constant $c>0$; see  \cite[Theorem 1]{BM1} for the special case $\lambda_k=\ln k$. }

It only remains to consider the situation where the maximum of $\lambda$ is reached at two or more location, say $j_1, \ldots, j_\ell\in S$ with $\ell\geq 2$. But this case immediately reduces to the preceding by considering a quotient space. Specifically set $\tilde S=S\backslash\{j_2, \ldots, j_\ell\}$, and then $\tilde \nu(i)=\nu(i) $ for $i\in \tilde S\backslash \{j_1\}$
and $\tilde \nu(j_1)=\nu(j_1)+ \cdots + \nu(j_\ell)$. Then it should be plain that, in the obvious notation, the distribution of variable
$\langle L_n, n\lambda\rangle$ under $\P_q$ is the same as that of $\langle \tilde L_n, n \tilde\lambda\rangle$ under $\tilde \P_q$, where $\tilde\lambda$ is the restriction of $\lambda$ to $\tilde S$. We conclude that (i) still holds in this situation.

(ii) This follows from (i) by an application of the G\"artner-Ellis Theorem \cite[Theorem 2.3.6]{DZ} and the uniqueness of the rate function, see e.g. \cite[Lemma~4.1.4]{DZ} or \cite[Theorem 1.15]{BD}.
\end{proof}

The  Fenchel-Legendre transform $\Lambda^*_q$   can be evaluated numerically, but unfortunately, it seems that  there is no both general and explicit expression for it (like the variational formula \eqref{E:var} in the case without memory $q=0$),  because the function $\Lambda_q$ in \eqref{E:Lambda}
also cannot be computed explicitly in the first place. There are nonetheless situations where calculations are available, for instance whenever $(1/q-1)\boldsymbol{\nu}$ is integer valued\footnote{ Then $t\mapsto \prod (1-t\e^{\lambda(k)})^{\nu(k)(1-q) /q}$
is a polynom function in the variable $t$, which can thus be integrated explicitly.}.
The example below  illustrates the simplest case of all.

\begin{example} \label{E:es}
Suppose that $S=\{1,2\}$, that the reproduction law is given by $\nu(j)=1/2$ for $j=1,2$. Suppose further that $q=1/3$, so that $(1-q)/q=2$. Then $\Lambda_q(x,y)$ is a symmetric function and
 for any $x\leq y$, we find:
$$\Lambda_{1/3}(x,y) = \log(2) + y -\log\left(3-\e^{x-y}\right).
$$
Thus
$$\nabla \Lambda_{1/3}(x,y)=
\left(\frac{\e^{x-y}}{3-\e^{x-y}},  1-\frac{\e^{x-y}}{3-\e^{x-y}}\right),$$
and we further get for any $0<p\leq 1/2$,
$$\Lambda^*_{1/3}(p,1-p) =\Lambda^*_{1/3}(1-p,p)= p \log\left(\frac{3p}{p+1}\right)-\log(2)+\log\left(\frac{3}{p+1}\right).$$
On the other hand, we have for $\boldsymbol{\rho}=(p,1-p)$ that
$$ H(\boldsymbol{\rho} | q\boldsymbol{\rho} + (1-q) \boldsymbol{\nu})= p \log\left(\frac{3p}{p+1}\right)+(1-p) \log\left(\frac{3(1-p)}{2-p}\right),$$
and the upper-bound \eqref{E:IBW} is remarkably sharp in this case; see Figure \ref{F1}.
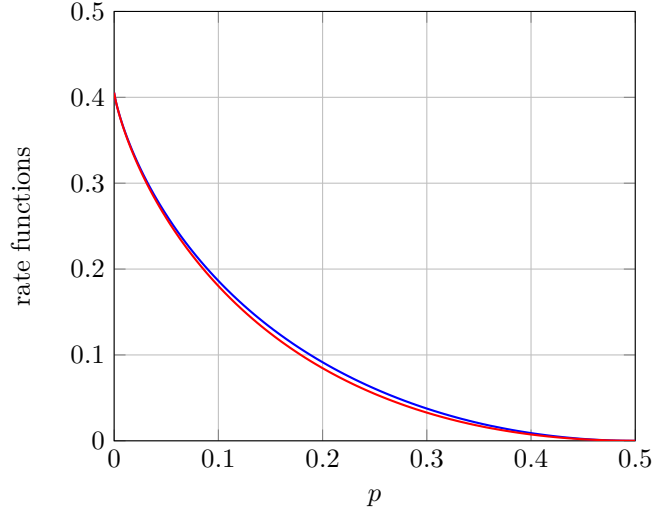
\begin{figure}[!h]
 \begin{center}
\begin{tikzpicture}
    \begin{axis}[
        domain=0:.5,
        samples=500,
        xlabel={$p$},
        ylabel={rate functions},
        grid=both,
        ymin=0, ymax=.5,
        xmin=0, xmax=0.5]
    \addplot[thick,blue] {x*ln(3*x/(x+1)) + (1-x)*ln(3*(1-x)/(2-x))};
    \addplot[thick,red] {x*ln(3*x/(x+1)) - ln(2) + ln(3/(x+1))};
    \end{axis}
\end{tikzpicture}

 \caption{Graphs of  $H(\boldsymbol{\rho} | q\boldsymbol{\rho} + (1-q) \boldsymbol{\nu})$ (\textcolor{blue}{blue}) and of  $\Lambda_q^*(\boldsymbol{\rho})$ (\textcolor{red}{red}) for $\boldsymbol{\rho}=(p,1-p)$ with $p\in[0,1/2]$ in Example \ref{E:es}. The function $H(\cdot | q\cdot + (1-q)\boldsymbol{\nu})$ appears as a very good approximation to $\Lambda_q^*$.}\label{fig:gluingtree}
 \label{F1}
 \end{center}
\end{figure}
\end{example}

\begin{remark}For every $\lambda \in [-\infty, \infty)^{S}$  with $\bar \lambda\coloneqq \sup_{S} \lambda>-\infty$,  the product in  \eqref{E:Lambda} can be bounded from above by $\indset{\{t\leq \exp(-\bar \lambda)\}}$. It follows that
$$\Lambda_q(\lambda) \geq \log q + \bar \lambda,$$
and then,  one has for every $\boldsymbol{\rho}\in \mathcal P_S$,
$$\Lambda^*_q(\boldsymbol{\rho})\leq -\log q.$$
Observe that this agrees with \eqref{E:IBW}, since  obviously $H(\boldsymbol{\rho} | q\boldsymbol{\rho} + (1-q) \boldsymbol{\nu})\leq -\log q$.
\end{remark}

It follows from Theorem \ref{T:RGE} that   $\Lambda_q$ is a convex function on $\R^{S}$, a fact that however does not seem easy to verify directly.
At the opposite, the next first two claims are readily checked directly from the definition  \eqref{E:Lambda}, without appealing to Theorem \ref{T:RGE}.
\begin{lemma} \label{L:Lambda}  The following assertions hold:

\begin{enumerate}
\item[(i)]
For any $\lambda \in [-\infty, \infty)^{S}$ and any $j\in S$ with $\lambda(j)>-\infty$,
  $\Lambda_q$ has a strictly positive partial derivative in the $j$-direction at $\lambda$, $\partial_j \Lambda_q(\lambda)>0$. \\
For $\lambda$ with $\lambda(j)=-\infty$, we further set $\partial_j \Lambda_q(\lambda)=0$.

\item[(ii)] The gradient of $\Lambda_q$,  $\nabla \Lambda_q \coloneqq \left(\partial_j \Lambda_q\right)_{j\in S}$, induces a continuous function
on $\{\lambda \in [-\infty, \infty)^{S}: \lambda_k>-\infty \text{ for some }k\in S\}$ with values in $\mathcal P_S$.
In particular,  the function $\Lambda_q$ is $\mathcal{C}^1$ on $\R^S$.

\item[(iii)] For any $\lambda \in [-\infty, \infty)^{S}$ with $\lambda\not \equiv -\infty$ and $ \boldsymbol{\rho}=\nabla \Lambda_q(\lambda)$,
one has the identity
$$\Lambda_q(\lambda)= \langle \boldsymbol{\rho}, \lambda\rangle-\Lambda^*_q(\boldsymbol{\rho}).
$$
For any $\boldsymbol{\rho}'\in \mathcal P_s$ with $\boldsymbol{\rho}'\neq \nabla \Lambda_q(\lambda)$, one has also
$$\Lambda_q(\lambda)>  \langle \boldsymbol{\rho}', \lambda\rangle-\Lambda^*_q(\boldsymbol{\rho}').
$$
\end{enumerate}
\end{lemma}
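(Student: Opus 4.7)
The plan is to treat parts (i) and (ii) by a direct computation with the integral representation \eqref{E:Lambda}, and then derive (iii) from convex duality using that $\Lambda_q$ is already known to be convex and $\mathcal{C}^1$.

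Set $a_k\coloneqq \e^{\lambda(k)}$, $\bar a\coloneqq \max_{k\in S} a_k$, and write
$$G_\lambda(t)\coloneqq \prod (1-t a_k)_+^{\nu(k)(1-q)/q},\qquad F(\lambda)\coloneqq \int_0^{1/\bar a} G_\lambda(t)\,\dd t,$$
so that $\Lambda_q(\lambda)=\log q-\log F(\lambda)$. For part (i), I would differentiate under the integral sign. For fixed $j$ with $\lambda(j)>-\infty$,
$$\partial_{\lambda(j)} G_\lambda(t)= -\frac{\nu(j)(1-q)}{q}\cdot\frac{t a_j}{1-t a_j}\,G_\lambda(t),$$
and this is absolutely integrable over $(0,1/\bar a)$: the only concern is near $t=1/\bar a$, where the product $\frac{ta_j}{1-ta_j}\cdot(1-ta_j)^{\nu(j)(1-q)/q}$ behaves like $(1-ta_j)^{\nu(j)(1-q)/q-1}$, whose exponent exceeds $-1$. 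A routine dominated-convergence argument on a neighborhood of $\lambda$ justifies the exchange, and yields
$$\partial_j\Lambda_q(\lambda)=\frac{1}{F(\lambda)}\cdot\frac{\nu(j)(1-q)}{q}\int_0^{1/\bar a}\frac{t a_j}{1-ta_j}\,G_\lambda(t)\,\dd t>0.$$

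For (ii), the key identity $\sum_j\partial_j\Lambda_q(\lambda)=1$ will come from integration by parts. Since
$$G_\lambda'(t)=-G_\lambda(t)\cdot\frac{1-q}{q}\sum\frac{\nu(k)a_k}{1-ta_k},$$
and since $[tG_\lambda(t)]_0^{1/\bar a}=0$ (the factor indexed by an $a_k$ that achieves $\bar a$ vanishes at the upper endpoint because its exponent is strictly positive), integration by parts gives
$$F(\lambda)=-\int_0^{1/\bar a}tG_\lambda'(t)\,\dd t=\frac{1-q}{q}\int_0^{1/\bar a}G_\lambda(t)\sum\nu(k)\frac{ta_k}{1-ta_k}\,\dd t,$$
so dividing by $F(\lambda)$ yields $\sum_j\partial_j\Lambda_q(\lambda)=1$ and hence $\nabla\Lambda_q(\lambda)\in\mathcal P_S$. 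Continuity of the map $\lambda\mapsto\nabla\Lambda_q(\lambda)$ on $\R^S$ is then straightforward from the explicit integral representation by dominated convergence; its extension to $\lambda$ with some coordinates equal to $-\infty$ is handled by noting that if $\lambda(j)\to-\infty$ then $a_j\to 0$ and the corresponding integrand (and hence $\partial_j\Lambda_q$) tends to $0$, in agreement with the stated convention, while the other factors $(1-ta_j)^{\nu(j)(1-q)/q}$ converge to $1$ uniformly on the shrinking interval of integration.

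For (iii), I would invoke convexity of $\Lambda_q$: Theorem~\ref{T:RGE}(i) exhibits $\Lambda_q$ as a pointwise limit of convex functions $\lambda\mapsto n^{-1}\log\E_q(\exp\langle L_n,n\lambda\rangle)$, hence it is itself convex on $\R^S$. Together with the $\mathcal{C}^1$ regularity established in (ii), this implies that the subdifferential $\partial\Lambda_q(\lambda)$ reduces to the singleton $\{\nabla\Lambda_q(\lambda)\}$. The Fenchel--Young inequality $\Lambda_q(\lambda)+\Lambda_q^*(\boldsymbol{\rho}')\geq\langle\boldsymbol{\rho}',\lambda\rangle$ is immediate from the definition of $\Lambda_q^*$, with equality if and only if $\boldsymbol{\rho}'\in\partial\Lambda_q(\lambda)=\{\nabla\Lambda_q(\lambda)\}$, which is exactly the pair of statements in (iii). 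The main technical obstacle throughout is the justification of differentiation under the integral sign and the integration by parts at the singular endpoint $t=1/\bar a$, but since all the exponents $\nu(k)(1-q)/q$ are strictly positive, this remains straightforward bookkeeping.
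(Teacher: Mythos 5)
Your approach to parts (i) and (ii) is essentially the same as the paper's: differentiation under the integral (Leibniz rule) gives the explicit formula for $\partial_j\Lambda_q(\lambda)$, and integration by parts over $t$ yields $\sum_j \partial_j\Lambda_q(\lambda)=1$. Your justification of integrability near the endpoint $t=1/\bar a$ is correct, though stated slightly loosely: if several $k\in S$ achieve $\bar a$, the relevant exponent is $\sum_{k:a_k=\bar a}\nu(k)(1-q)/q - 1$, which is still $>-1$, so the conclusion stands. The extension of continuity to coordinates tending to $-\infty$, which the paper summarizes as "straightforward," is also handled reasonably by you.

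For part (iii) you take a genuinely different (and somewhat more self-contained) route. The paper appeals directly to Theorem~\ref{T:RGE}(i) and then cites \cite[Lemma~2.3.9]{DZ}, which is the standard Fenchel--Legendre identity for exposed points. You instead rederive this from first principles: $\Lambda_q$ is convex as a pointwise limit of the convex log-moment-generating functions $\lambda\mapsto n^{-1}\log\E_q(\exp\langle L_n,n\lambda\rangle)$, and being $\mathcal{C}^1$ by part (ii), its subdifferential reduces to the gradient; the Fenchel--Young inequality with its equality condition then delivers both claims, including strictness for $\boldsymbol{\rho}'\neq\nabla\Lambda_q(\lambda)$. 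Both routes are correct; yours buys transparency at the cost of invoking the standard facts from convex analysis (differentiability of convex functions implies singleton subdifferential, and the equality condition in Fenchel--Young) rather than a packaged lemma. One small caveat, shared with the paper's treatment: the case where $\lambda$ has some coordinates equal to $-\infty$ is glossed over, since $\Lambda_q^*$ is defined as a supremum over $\R^S$ only; a limiting argument $\lambda(j)=-M\to-\infty$ together with the continuity statement of (ii) fills this in, but it is worth acknowledging.
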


\begin{proof} (i-ii) Indeed,  the Leibniz integral rule gives
$$\partial_j \Lambda_q(\lambda)= \nu(j)(1/q-1)\frac{ \int_0^{\infty}  \frac{t\e^{\lambda(j)}}{1-t\e^{\lambda(j)}} \prod (1-t\e^{\lambda(k)})_+^{\nu(k)(1-q) /q}\,  \dd t}{
\int_0^{\infty}   \prod (1-t\e^{\lambda(k)})_+^{\nu(k)(1-q) /q}\,  \dd t}>0.$$
An integration by parts shows that for any $\lambda \in [-\infty, \infty)^{S}$ with $\lambda\not \equiv -\infty$, we have
\begin{align*} & \int_0^{\infty}  \sum_{j\in S} \nu(j)(1/q-1)\frac{t\e^{\lambda(j)}}{1-t\e^{\lambda(j)}} \prod (1-t\e^{\lambda(k)})_+^{\nu(k)(1-q) /q}\,  \dd t \\
=& \int_0^{\infty}   \prod (1-t\e^{\lambda(k)})_+^{\nu(k)(1-q) /q}\,  \dd t,
\end{align*}
which translates into $$\sum_{j\in S}\partial_j\Lambda_q(\lambda)= 1.$$
All the other claims are straightforward.

(iii) Theorem \ref{T:RGE}(i) enables us to apply  \cite[Lemma 2.3.9]{DZ}.
\end{proof}

We deduce some basic properties of the rate function  $\Lambda_q^*$.
\begin{corollary}\label{C:basic} The following assertions hold:
\begin{enumerate}

\item[(i)] The function $\Lambda_q^*$ is strictly convex on $\mathcal P_S$.

\item[(ii)] For every $\boldsymbol{\rho}\in \mathcal P_S$, there exists $\lambda_{\boldsymbol{\rho}} \in [-\infty, \infty)^{S}$  with $\lambda_{\boldsymbol{\rho}}(k)>-\infty$ iff $\rho(k)>0$, such that
$$\Lambda^*_q(\boldsymbol{\rho})= \langle \boldsymbol{\rho}, \lambda_{\boldsymbol{\rho}}\rangle -\Lambda_q(\lambda_{\boldsymbol{\rho}}).$$
Moreover we have $$\boldsymbol{\rho}= \nabla \Lambda_q(\lambda_{\boldsymbol{\rho}}).$$
\end{enumerate}

\end{corollary}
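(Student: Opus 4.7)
The plan is to establish (ii) first via a direct variational argument that realizes $\lambda_{\boldsymbol{\rho}}$ as a maximizer in the Fenchel--Legendre formula for $\Lambda^*_q(\boldsymbol{\rho})$, and then to deduce (i) by combining (ii) with the strict inequality in Lemma~\ref{L:Lambda}(iii).

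For (ii), fix $\boldsymbol{\rho}\in\mathcal P_S$ with support $S' := \{k\in S : \rho(k)>0\}$, and consider $G(\lambda) := \langle \boldsymbol{\rho}, \lambda\rangle - \Lambda_q(\lambda)$, so that $\Lambda^*_q(\boldsymbol{\rho}) = \sup_\lambda G(\lambda)$. Two preliminary reductions simplify the search for a maximizer. First, the substitution $t\mapsto \e^{-c} t$ in the integral \eqref{E:Lambda} yields the shift identity $\Lambda_q(\lambda + c\mathbf{1}) = \Lambda_q(\lambda) + c$, so $G$ is invariant under such shifts and I may normalize $\lambda(k_0) = 0$ for a fixed $k_0\in S'$. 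Second, Lemma~\ref{L:Lambda}(i) says $\partial_k\Lambda_q(\lambda) > 0$ for $k$ with $\lambda(k)>-\infty$, so $G$ strictly decreases in $\lambda(k)$ whenever $k\notin S'$; sending $\lambda(k)\to -\infty$ for each such $k$ (with the conventions \eqref{E:conv}) does not decrease $G$. The problem thus reduces to maximizing $G$ on the finite-dimensional set $\Omega := \{\lambda : \lambda(k_0) = 0,\ \lambda(k)\in\R \text{ for } k\in S',\ \lambda(k) = -\infty \text{ for } k\notin S'\}$, which identifies with $\R^{S'\setminus\{k_0\}}$.

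The core technical step is the coercivity of $G$ on $\Omega$. Using the bound $\Lambda_q(\lambda)\geq \log q + \max_k\lambda(k)$ from the Remark preceding Lemma~\ref{L:Lambda}, together with $\langle\boldsymbol{\rho},\lambda\rangle \leq (1-\rho(k_0))\max_k\lambda(k)$ (valid on $\Omega$, using $\lambda(k_0)=0$ and $\max_k\lambda\geq 0$), one obtains $G(\lambda)\leq -\rho(k_0)\max_k\lambda(k) - \log q$, which tends to $-\infty$ as $\max_k\lambda(k)\to +\infty$ since $\rho(k_0)>0$. Conversely, if $\lambda(k_1)\to -\infty$ for some $k_1\in S'$ while $\max_k\lambda(k)$ stays bounded, then $\langle\boldsymbol{\rho},\lambda\rangle\to -\infty$ (as $\rho(k_1)>0$) while $\Lambda_q(\lambda)$ remains bounded by dominated convergence in \eqref{E:Lambda}, so $G\to -\infty$ again. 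The continuous function $G$ therefore attains its supremum at some $\lambda_{\boldsymbol{\rho}}\in\Omega$. The first-order conditions read $\rho(j) = \partial_j\Lambda_q(\lambda_{\boldsymbol{\rho}})$ for each $j\in S'\setminus\{k_0\}$; since both $(\rho(k))_{k\in S}$ and $(\partial_k\Lambda_q(\lambda_{\boldsymbol{\rho}}))_{k\in S}$ sum to $1$ and vanish outside $S'$ (by Lemma~\ref{L:Lambda}(i)--(ii)), equality extends to $k_0$ as well, giving $\boldsymbol{\rho} = \nabla\Lambda_q(\lambda_{\boldsymbol{\rho}})$. Lemma~\ref{L:Lambda}(iii) then yields the identity $\Lambda^*_q(\boldsymbol{\rho}) = \langle\boldsymbol{\rho},\lambda_{\boldsymbol{\rho}}\rangle - \Lambda_q(\lambda_{\boldsymbol{\rho}})$.

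For (i), take $\boldsymbol{\rho}_1\neq\boldsymbol{\rho}_2$ in $\mathcal P_S$, $t\in(0,1)$, and set $\boldsymbol{\rho}_t := t\boldsymbol{\rho}_1 + (1-t)\boldsymbol{\rho}_2$; let $\lambda_t := \lambda_{\boldsymbol{\rho}_t}$ be provided by (ii). Since $\{k:\rho_t(k)>0\}\supseteq \{k:\rho_i(k)>0\}$, the pairings $\langle\boldsymbol{\rho}_i,\lambda_t\rangle$ are finite for $i=1,2$. Moreover $\boldsymbol{\rho}_i\neq\boldsymbol{\rho}_t = \nabla\Lambda_q(\lambda_t)$, so the strict inequality of Lemma~\ref{L:Lambda}(iii) applied to $\boldsymbol{\rho}' = \boldsymbol{\rho}_i$ gives $\Lambda_q^*(\boldsymbol{\rho}_i) > \langle\boldsymbol{\rho}_i,\lambda_t\rangle - \Lambda_q(\lambda_t)$. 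Taking the convex combination and using the identity from (ii) yields $t\Lambda_q^*(\boldsymbol{\rho}_1) + (1-t)\Lambda_q^*(\boldsymbol{\rho}_2) > \langle\boldsymbol{\rho}_t,\lambda_t\rangle - \Lambda_q(\lambda_t) = \Lambda_q^*(\boldsymbol{\rho}_t)$, establishing strict convexity. The main obstacle in the whole proof is the coercivity step: one must control the direction $\max_k\lambda(k)\to+\infty$ via the elementary bound on $\Lambda_q$ and simultaneously the boundary direction $\lambda(k)\to-\infty$ for $k\in S'$ via dominated convergence, with the $-\infty$ extensions justified by the conventions \eqref{E:conv}; once the existence of $\lambda_{\boldsymbol{\rho}}$ is granted, the rest is a mechanical application of Lemma~\ref{L:Lambda}.
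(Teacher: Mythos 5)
Your proof is correct, and it takes a route that differs meaningfully from the paper's in both parts. For (ii), the paper exploits the shift-invariance of $G(\lambda)=\langle\boldsymbol{\rho},\lambda\rangle-\Lambda_q(\lambda)$ by normalizing a maximizing sequence so that $\max_S\lambda_n=0$, which places it in the compact set $[-\infty,0]^S$; a subsequential limit $\lambda_{\boldsymbol{\rho}}$ then exists for free, and the support condition $\lambda_{\boldsymbol{\rho}}(k)>-\infty \Leftrightarrow \rho(k)>0$ is verified a posteriori by a perturbation argument (if $\lambda_{\boldsymbol{\rho}}(j)>-\infty$ while $\rho(j)=0$, setting $\lambda'(j)=-\infty$ strictly decreases $\Lambda_q$ without changing $\langle\boldsymbol{\rho},\cdot\rangle$, contradicting optimality). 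You instead normalize $\lambda(k_0)=0$ at a fixed $k_0$ in the support, reduce by monotonicity to the finite-dimensional face $\Omega$ where $\lambda(k)=-\infty$ for $k\notin S'$ from the start, and prove coercivity of $G$ on $\Omega$ in two directions ($\max\lambda\to+\infty$ via the lower bound $\Lambda_q\geq\log q+\bar\lambda$, and $\lambda(k_1)\to-\infty$ for $k_1\in S'$ via dominated convergence in \eqref{E:Lambda}). The paper's normalization buys a slicker compactness argument; yours requires the extra coercivity check but makes the support condition automatic by construction and reads naturally as a first-order optimality argument. For (i), the paper simply cites the Rockafellar--Wets duality between essential smoothness and strict convexity of conjugates, whereas you derive strict convexity directly from (ii) together with the strict inequality in Lemma~\ref{L:Lambda}(iii) evaluated at $\lambda_{\boldsymbol{\rho}_t}$ for the midpoint $\boldsymbol{\rho}_t=t\boldsymbol{\rho}_1+(1-t)\boldsymbol{\rho}_2$; this is more elementary and self-contained, at the modest cost of checking that the pairings $\langle\boldsymbol{\rho}_i,\lambda_t\rangle$ are finite (which follows since the support of $\boldsymbol{\rho}_t$ contains the supports of $\boldsymbol{\rho}_1$ and $\boldsymbol{\rho}_2$).
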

\begin{proof} (i) Indeed, the Fenchel-Legendre transform of a $\mathcal C^1$ function is always strictly convex; see e.g. \cite[Theorem 11.13]{RW}.

(ii) Consider any sequence $(\lambda_n)$ in $\R^S$ such that
$ \langle \boldsymbol{\rho}, \lambda_n\rangle -\Lambda_q(\lambda_n)$ converges to $ \Lambda^*_q(\boldsymbol{\rho})$ as $n\to\infty$. Observe that $\Lambda_q(\lambda+c)=c+\Lambda_q(\lambda)$ for any $\lambda\in \R^S$ and  any $c\in \R$, so there is no loss of generality in assuming that $\bar \lambda_n\coloneqq \max_S \lambda_n=0$ for every $n$. Then, $\lambda_n$  converges  along some subsequence to, say,  $\lambda_{\boldsymbol{\rho}} \in [-\infty, 0]^{S}$  with $\bar \lambda_{\boldsymbol{\rho}}=0$, and this yields the identity
$$\Lambda^*_q(\boldsymbol{\rho})= \langle \boldsymbol{\rho}, \lambda_{\boldsymbol{\rho}}\rangle -\Lambda_q(\lambda_{\boldsymbol{\rho}}).$$
Obviously $\lambda_{\boldsymbol{\rho}}(k)=-\infty$ can only occur when $\rho(k)=0$. In the converse direction, suppose that $\lambda_{\boldsymbol{\rho}}(j)>-\infty$ for some $j\in S$ with $\rho(j)=0$.
Let $\lambda'\in [-\infty, \infty)^{S}$ defined by $\lambda'(k)=\lambda_{\boldsymbol{\rho}}(k)$ if $k\neq j$ and $\lambda'(j)=-\infty$. Then
$\langle \boldsymbol{\rho}, \lambda_{\boldsymbol{\rho}}\rangle= \langle \boldsymbol{\rho}, \lambda'\rangle$
and $\Lambda_q(\lambda_{\boldsymbol{\rho}})> \Lambda_q(\lambda')$, which is absurd. Last, the claim that the gradient of $\Lambda_q$ evaluated at $\lambda_{\boldsymbol{\rho}}$ equals $\boldsymbol{\rho}$ is plain, as
$\lambda_{\boldsymbol{\rho}}$ is a critical point of $\langle \boldsymbol{\rho}, \cdot\rangle- \Lambda_q(\cdot)$.
\end{proof}

The strict convexity of the rate function $\Lambda^*_q$ readily yields the convergence in probability of the empirical distribution conditionally on a large class of rare events, which is a reinforced version of \eqref{E:mostliketraj};  see also e.g \cite[Theorem 1.4]{BD}.

\begin{corollary} \label{C:Rgibbs} Let $\Gamma$ be a  closed convex subspace of $\mathcal P_{S}$ with a non-empty interior. Then there is the convergence in probability
$$ \lim_{n\to \infty}  \P_q(\mathrm{dist}_{\mathcal P_S}(L_{n},\boldsymbol{\gamma}_q)< \varepsilon \mid L_n\in \Gamma) =1, \qquad \text{for all } \varepsilon>0,$$
where $\boldsymbol{\gamma}_q= \arg\min_{\Gamma} \Lambda_q^*$ is the unique location  on $\Gamma$ at which $\Lambda_q^*$ attains its minimum.
\end{corollary}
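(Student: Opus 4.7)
The plan is to deduce the convergence in probability from the Large Deviations Principle of Theorem~\ref{T:LDPBW} combined with the strict convexity of the rate function $\Lambda_q^*$ established in Corollary~\ref{C:basic}(i). More precisely, I will show the stronger statement that the conditional probability in question decays exponentially fast in $n$, by separately controlling the numerator and denominator in
$$\P_q\bigl(\mathrm{dist}_{\mathcal P_S}(L_n,\boldsymbol{\gamma}_q)\geq \varepsilon \mid L_n\in \Gamma\bigr) = \frac{\P_q(L_n\in F_\varepsilon)}{\P_q(L_n\in \Gamma)},$$
where $F_\varepsilon\coloneqq \Gamma\cap\{\boldsymbol{\mu}\in\mathcal P_S : \mathrm{dist}_{\mathcal P_S}(\boldsymbol{\mu},\boldsymbol{\gamma}_q)\geq \varepsilon\}$ is closed as an intersection of two closed sets.

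First, I would apply the large deviations upper-bound from Theorem~\ref{T:LDPBW} to $F_\varepsilon$, which yields
$$\limsup_{n\to\infty} \frac{1}{n}\log \P_q(L_n\in F_\varepsilon) \leq -\alpha, \qquad \alpha\coloneqq \inf_{F_\varepsilon} \Lambda_q^*.$$
The set $F_\varepsilon$ is a closed (hence compact) subset of the simplex $\mathcal P_S$, and $\Lambda_q^*$ is lower semi-continuous as a rate function, so the infimum is attained at some $\boldsymbol{\mu}_*\in F_\varepsilon$. By construction $\boldsymbol{\mu}_*\neq \boldsymbol{\gamma}_q$, and the strict convexity of $\Lambda_q^*$ on the convex set $\Gamma$ (Corollary~\ref{C:basic}(i)) guarantees that $\boldsymbol{\gamma}_q$ is the \emph{unique} minimizer of $\Lambda_q^*$ on $\Gamma$. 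Hence $\alpha > \beta$, where $\beta\coloneqq \Lambda_q^*(\boldsymbol{\gamma}_q)=\min_{\Gamma} \Lambda_q^*$. This is the key step, and I expect it to be the main obstacle, as it crucially requires strict convexity (whereas $\Lambda_q^*$ being merely convex would not preclude the infimum being attained on a flat region containing points at positive distance from $\boldsymbol{\gamma}_q$).

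For the denominator, I would apply the large deviations lower-bound to the open set $\mathrm{int}(\Gamma)$, obtaining
$$\liminf_{n\to\infty} \frac{1}{n}\log \P_q(L_n\in \Gamma) \geq \liminf_{n\to\infty} \frac{1}{n}\log \P_q(L_n\in \mathrm{int}(\Gamma)) \geq -\inf_{\mathrm{int}(\Gamma)} \Lambda_q^*.$$
Since $\Gamma$ is convex with non-empty interior in a finite-dimensional space, $\Gamma$ is the closure of its interior; combined with continuity of $\Lambda_q^*$ on the simplex, this yields $\inf_{\mathrm{int}(\Gamma)} \Lambda_q^* = \inf_{\Gamma} \Lambda_q^* = \beta$.

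Putting these together gives
$$\limsup_{n\to\infty} \frac{1}{n}\log \P_q\bigl(\mathrm{dist}_{\mathcal P_S}(L_n,\boldsymbol{\gamma}_q)\geq \varepsilon \mid L_n\in \Gamma\bigr) \leq -(\alpha-\beta) < 0,$$
so the conditional probability decays exponentially fast, which is far stronger than the stated convergence in probability. The argument follows the same template as the classical Gibbs conditioning principle recalled in Section~\ref{S:LDT}, the only novelty being that strict convexity of the reinforced rate function $\Lambda_q^*$ has to be invoked in place of the strict convexity of the relative entropy $H(\cdot\mid\boldsymbol{\nu})$.
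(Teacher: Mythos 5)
Your argument is correct and is exactly the standard Gibbs conditioning argument that the paper implicitly invokes (the paper itself gives no proof and only refers the reader to \cite[Theorem~1.4]{BD}). Your structure — upper bound for the numerator over the compact set $F_\varepsilon$ via LDP, strict separation $\alpha>\beta$ by uniqueness of the minimizer (which is where the strict convexity of $\Lambda_q^*$ from Corollary~\ref{C:basic}(i) is used), lower bound for the denominator via the interior — is precisely the intended route, and you correctly identify the role of strict convexity. The only slight looseness is your step $\inf_{\mathrm{int}(\Gamma)}\Lambda_q^*=\inf_\Gamma\Lambda_q^*$, which you justify by ``$\Gamma$ is the closure of its interior plus continuity of $\Lambda_q^*$ on the simplex''; continuity of a finite convex function up to the boundary of the simplex is a bit delicate to invoke casually (it does hold here by the Gale--Klee--Rockafellar theorem), but a cleaner way to get this identity is to take $\boldsymbol{\rho}\in\Gamma$, $\boldsymbol{\sigma}\in\mathrm{int}(\Gamma)$, note $(1-t)\boldsymbol{\rho}+t\boldsymbol{\sigma}\in\mathrm{int}(\Gamma)$ for $t\in(0,1]$, and use convexity of $\Lambda_q^*$ to send $t\to 0$. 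This avoids any appeal to boundary continuity and uses only the properties already established in the paper.
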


\begin{remark}
\label{R:nonLine}
{As a consequence of this result and Corollary~\ref{C:IBW}, we observe that contrarily to the usual Sanov's theorem, the path followed by the process of empirical measures, conditioned on being close to $\boldsymbol{\rho}$ at a large time $n$, is not nearly constant. Heuristically, this is due to the fact that it is less costly to influence empirical measures at the start of the process than thereafter.
Informally, a thrifty strategy is thus to drive $L$ during a relatively short time $\floor{\delta n}$ towards a state from which the probability of reaching  a neighborhood of $\boldsymbol{\rho}$ at time $n$ is high.}
\end{remark}

\section{Reinforced Galton--Watson processes} \label{S:Main}

We now state the main results of this paper, which should be viewed as a reinforced version of Theorem \ref{T0}. Recall the notation \eqref{E:rhoell}, as well as that in Theorems \ref{T:LDPBW} and \ref{T:RGE}, and also Lemma \ref{L:Lambda}. Last, note that for $q=0$, the probability measure $\nabla \Lambda_0(\ln)$ coincides with  the size-biased version of the
reproduction law $\bar{\boldsymbol{\nu}}_0$.

\begin{theorem} \label{Tref}  The following claims hold for reinforced Galton--Watson trees with memory parameter $0<q<1$:

\begin{enumerate}
\item[(i)] \textbf{Exponential concentration: }
Define $$\bar{\boldsymbol{\nu}}_q\coloneqq \nabla \Lambda_q(\ln).
$$
Then $\bar{\boldsymbol{\nu}}_q\in \mathcal{P}_S$ and for every neighborhood $G$ of $\bar{\boldsymbol{\nu}}_q$, there exists $\varepsilon=\varepsilon(G)>0$ such that for all $n\geq 1$,
$$\E_q(\#\{v\in T: |v|=n \ \&\  \boldsymbol{\mu}_v\not \in G\})) \leq \e^{-\varepsilon n} \E_q(\#\{v\in T: |v|=n\}).$$

\item[(ii)]  \textbf{Evanescent laws:}
 Any  ${\boldsymbol{\rho}}\in \mathcal P_S$ with
$$\langle \boldsymbol{\rho}, \ln \rangle < \Lambda_q^*(\boldsymbol{\rho}) $$
 is evanescent, $\P_q$-a.s.

 \item[(iii)] \textbf{Strongly persistent laws:}  For any ${\boldsymbol{\rho}}\in \mathcal P_S$ with
$$\langle \boldsymbol{\rho}, \ln \rangle >  H(\boldsymbol{\rho} | q \boldsymbol{\rho} + (1-q) \boldsymbol{\nu} ), $$
the probability under $\P_q$ that $\boldsymbol{\rho}$ is strongly persistent is strictly positive.

\end{enumerate}

 \end{theorem}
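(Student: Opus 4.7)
The proof proceeds by combining a reinforced version of the many-to-one formula of Lemma~\ref{L1} with the large deviations material of Section~\ref{S:Sanovreinforced}, in the same spirit as the proof of Theorem~\ref{T0}. The identity in Lemma~\ref{L1} is stated for any deterministic tree; taking expectation under $\P_q$ and using that the offspring counts along the harmonic line in a reinforced Galton--Watson tree share the same joint distribution as the reinforced urn $(\xi_1,\ldots,\xi_n)$ under $\P_q$, one obtains the key identity
\begin{equation*}
\E_q\bigl(\#\{v\in T: |v|=n,\ \boldsymbol{\mu}_v\in F\}\bigr) = \E_q\bigl(\exp(n\langle L_n,\ln\rangle)\mathbf{1}_{\{L_n\in F\}}\bigr)
\end{equation*}
for every Borel $F\subset\mathcal{P}_S$, where $L_n$ denotes the empirical measure of the reinforced urn.

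Parts (i) and (ii) follow from this identity via Varadhan's lemma applied to the LDP of Theorem~\ref{T:LDPBW} (in the form of Theorem~\ref{T:RGE}). By Lemma~\ref{L:Lambda}(iii) and strict convexity of $\Lambda_q^*$ (Corollary~\ref{C:basic}(i)), the map $\boldsymbol{\rho}\mapsto \langle\boldsymbol{\rho},\ln\rangle-\Lambda_q^*(\boldsymbol{\rho})$ attains its maximum $\Lambda_q(\ln)$ at the unique point $\bar{\boldsymbol{\nu}}_q=\nabla\Lambda_q(\ln)$; for (i), taking $F$ to be the complement of a neighborhood $G$ of $\bar{\boldsymbol{\nu}}_q$ yields an exponential gap $\varepsilon(G)>0$ in the ratio $\E_q(\#\{|v|=n,\boldsymbol{\mu}_v\notin G\})/\E_q(\#\{|v|=n\})$. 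For (ii), if $\langle\boldsymbol{\rho},\ln\rangle<\Lambda_q^*(\boldsymbol{\rho})$, lower semicontinuity of $\Lambda_q^*$ furnishes a closed neighborhood $F$ of $\boldsymbol{\rho}$ on which $\langle\boldsymbol{\rho}',\ln\rangle-\Lambda_q^*(\boldsymbol{\rho}')\leq -\varepsilon<0$; the identity together with Varadhan then gives $\E_q(\#\{v:|v|=n,\boldsymbol{\mu}_v\in F\})$ summable in $n$, so this count is a.s.\ finite and $\boldsymbol{\rho}$ is evanescent.

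Part (iii) calls for a spinal decomposition adapted to reinforcement. Set
\begin{equation*}
\pi(v)\coloneqq \prod_{j=0}^{|v|-1}\frac{\rho(d(v_j))}{d(v_j)\bigl(q\boldsymbol{\mu}_{v_j}(d(v_j))+(1-q)\nu(d(v_j))\bigr)}, \qquad W_n\coloneqq \sum_{|v|=n}\pi(v).
\end{equation*}
Conditionally on $\mathcal{F}_n$, each vertex $v$ at generation $n$ has $d(v)\sim q\boldsymbol{\mu}_v+(1-q)\boldsymbol{\nu}$ and its $d(v)$ children all contribute the same additional factor $\rho(d(v))/(d(v)(q\mu_v(d(v))+(1-q)\nu(d(v))))$ to $\pi$; a direct computation yields $\E_q[W_{n+1}\mid\mathcal{F}_n]=W_n$, so $(W_n)$ is a nonnegative $\P_q$-martingale with mean $1$. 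As in Lemma~\ref{L:spinal}, it induces a spine measure $\P_q^{\boldsymbol{\rho}}$ under which a distinguished line $(V_n)$ is selected with probability proportional to $\pi(v)$. Applying the reinforced many-to-one identity to compute $\P_q^{\boldsymbol{\rho}}(d(V_j)=x_j\ \forall j<n)$, the factors involving $\boldsymbol{\mu}_{v_j}$ cancel exactly, leaving $\prod_j \rho(x_j)$: under $\P_q^{\boldsymbol{\rho}}$, the spine's offspring counts are i.i.d.\ with law $\boldsymbol{\rho}$, and by the strong law of large numbers $\boldsymbol{\mu}_{V_n}\to\boldsymbol{\rho}$, so $\boldsymbol{\rho}$ is strongly persistent $\P_q^{\boldsymbol{\rho}}$-a.s.

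The main obstacle, as in Lemma~\ref{L:spinal}(ii), is establishing uniform integrability of $(W_n)$ under $\P_q$. Under $\P_q^{\boldsymbol{\rho}}$, the LLN along the spine together with $\boldsymbol{\mu}_{V_n}\to\boldsymbol{\rho}$ gives
\begin{equation*}
\tfrac{1}{n}\log\pi(V_n)\xrightarrow[n\to\infty]{} H\bigl(\boldsymbol{\rho}\,\big|\,q\boldsymbol{\rho}+(1-q)\boldsymbol{\nu}\bigr)-\langle\boldsymbol{\rho},\ln\rangle,
\end{equation*}
which is strictly negative under the hypothesis of (iii); hence $\pi(V_n)\to 0$ exponentially, $\P_q^{\boldsymbol{\rho}}$-a.s. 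A standard spinal decomposition then represents $W_n$ under $\P_q^{\boldsymbol{\rho}}$ as $\pi(V_n)$ plus contributions from sibling-subtrees grafted off each vertex of the spine; these subtrees are reinforced Galton--Watson trees with an ancestral history inherited from the spine up to the grafting point, and their contributions are controlled via the martingale property together with the same exponential decay of $\pi$ along the spine. This yields $\sup_n W_n<\infty$, $\P_q^{\boldsymbol{\rho}}$-a.s., and Durrett's criterion delivers uniform integrability of $(W_n)$ under $\P_q$. Radon--Nikodym on $\mathcal{F}_\infty$ then upgrades $\P_q^{\boldsymbol{\rho}}$-almost-sure strong persistence of $\boldsymbol{\rho}$ to strictly positive $\P_q$-probability, completing the proof.
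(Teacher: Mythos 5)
Your treatment of parts (i) and (ii) is correct and follows the same route as the paper: a many-to-one identity expressing $\E_q(\#\{v:|v|=n,\,\boldsymbol{\mu}_v\in F\})$ as $\E_q(\exp(n\langle L_n,\ln\rangle)\ind{L_n\in F})$ for the reinforced urn $L_n$, then Varadhan/Laplace applied to the LDP of Theorems~\ref{T:LDPBW}--\ref{T:RGE} together with Lemma~\ref{L:Lambda}(iii).

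Part (iii) is where you genuinely diverge from the paper, and your route is both correct and, in one important respect, more transparent. The paper proves (iii) through Lemma~\ref{L:tech}: it takes a fixed function $a:S\to[0,1/q)$ and builds the martingale $\Phi^a(v)=\prod_j \frac{a(d(v_j))/d(v_j)}{m_a(\boldsymbol{\mu}_{v_j})}$, where the renormaliser $m_a(\boldsymbol{\mu}_{v_j})$ depends on the ancestral empirical measure. The price is that under the resulting spine measure the out-degrees $(d(V^a_n))$ are \emph{not} i.i.d.; they form a generalised P\'olya urn, and one must invoke Athreya--Karlin/Janson to identify the limiting empirical measure $\boldsymbol{\pi}_a$ and the exponential rate of $\Phi^a(V^a_n)$. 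Your change of measure
\[
\pi(v)=\prod_{j<|v|}\frac{\rho(d(v_j))}{d(v_j)\bigl(q\boldsymbol{\mu}_{v_j}(d(v_j))+(1-q)\nu(d(v_j))\bigr)}
\]
puts the ancestral empirical measure directly into the denominator (rather than through an external normaliser), and this choice has the pleasant consequence you identify: the reinforced-urn factors $q\boldsymbol{\mu}_{v_j}(d(v_j))+(1-q)\nu(d(v_j))$ cancel exactly against the conditional transition probabilities, so the spine degrees are i.i.d.\ $\boldsymbol{\rho}$ under $\P_q^{\boldsymbol{\rho}}$. The law of large numbers then replaces the whole generalised-urn analysis, both for $\boldsymbol{\mu}_{V_n}\to\boldsymbol{\rho}$ and for $\tfrac1n\log\pi(V_n)\to H(\boldsymbol{\rho}\,|\,q\boldsymbol{\rho}+(1-q)\boldsymbol{\nu})-\langle\boldsymbol{\rho},\ln\rangle$. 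This is a genuine simplification for the purposes of Theorem~\ref{Tref}(iii). What the paper's more elaborate Lemma~\ref{L:tech} buys is a statement parametrised by a general $a\in\mathcal A$, which is what is later optimised over in Section~\ref{S:survival}; your martingale is hard-wired to the target $\boldsymbol{\rho}$ and would not serve that secondary purpose directly, though of course it still covers every $\boldsymbol{\pi}_a$.

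One step that deserves to be written out rather than waved at is your ``standard spinal decomposition'' paragraph. The sibling subtrees hanging off the spine are reinforced Galton--Watson trees with inherited ancestral history, so the conditional martingale rooted at a sibling $u$ of $V_{j+1}$ is a martingale with mean~$1$ given the spine degrees up to generation~$j$ (this uses the branching property that a vertex's subtree depends on the past only through the degree sequence of its ancestors). Together with $\pi(u)=\pi(V_{j+1})$ for a sibling $u$, the decomposition reads
\[
W_n=\pi(V_n)+\sum_{j=0}^{n-1}\pi(V_{j+1})\sum_{u\neq V_{j+1},\,u\text{ child of }V_j}\tilde W^{(u)}_n,
\]
the conditional expectation of which, given the spine, is at most $\pi(V_n)+(\sup S)\sum_j\pi(V_{j+1})<\infty$. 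Combined with the fact that $1/W_n$ is a nonnegative $\P_q^{\boldsymbol{\rho}}$-supermartingale, this gives $\limsup_n W_n<\infty$ $\P_q^{\boldsymbol{\rho}}$-a.s., and Durrett's criterion finishes as you say. Provided this is spelled out, the argument is complete.
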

\begin{example} As  in Example \ref{E:es}, consider the reproduction law $\boldsymbol{\nu}=\frac{1}{2}\boldsymbol{\delta}_1+ \frac{1}{2}\boldsymbol{\delta}_2$. By Theorem \ref{T0}(i), the empirical offspring distributions of vertices at generation $n\gg 1$ concentrated around the size-biased law $\bar{\boldsymbol{\nu}}_0=\frac{1}{3}\boldsymbol{\delta}_1+ \frac{2}{3}\boldsymbol{\delta}_2$ in the Galton--Watson case, whereas for the reinforced version with memory parameter $q=1/3$, Theorem \ref{Tref}(i) shows that they rather concentrate around $\frac{1}{5}\boldsymbol{\delta}_1+ \frac{4}{5}\boldsymbol{\delta}_2$.
\end{example}

\begin{proof}[Proof of (i)] The proof relies on previous estimates obtained in \cite{BM1}. We shall use these estimates, together with the many-to-one lemma, to compute the average number of individuals at time $n$ with a given empirical distribution.

We know from \cite[Theorem 1.1]{BM1} that
\begin{equation}\label{E:BM1f}
\lim_{n\to \infty} \frac{1}{n} \log \E_q(\#\{v\in T: |v|=n\}) = \Lambda_q(\ln).
\end{equation}
Next,  the many-to-one formula of Lemma \ref{L1} entails that for any $n\geq 1$ and  any measurable $B\subset \mathcal P_S$, there is the
identity
$$\E_q(\#\{v\in T: |v|=n \ \&\  \boldsymbol{\mu}_v\in B\}) = \E_q\left(  \exp(n  \langle \boldsymbol{\mu}_{U_n}, \ln \rangle)
\indset{\boldsymbol{\mu}_{U_n}\in B}\right).$$

The construction of the out-degree sequence  $(d(U_n))$ along a harmonic line under $\P_q$ mirrors that of a reinforced sequence $(\xi_n)$  until the first appearance of the value $0$ (which never occurs a.s. if and only if $\nu(0)=0$, i.e. if and only if there are no empty offsprings). More precisely the former has the same distribution as the latter  stopped at the first index $n$ at which $\xi_n=0$.
Using that $\exp(n\crochet{L_n,\ln}) = 0$ if $0$ appears in the sequence, in terms of the empirical measure, this yields the equality
$$\E_q(\#\{v\in T: |v|=n \ \&\  \boldsymbol{\mu}_v\in B\}) = \E_q\left(  \exp(n  \langle L_n, \ln \rangle )
\indset{L_n\in B}\right).$$

On the other hand, we know from Lemma \ref{L:Lambda}(iii) that
$$  \Lambda_q(\ln) = \langle \bar{\boldsymbol{\nu}}_q, \ln \rangle - \Lambda^*_q(\bar{\boldsymbol{\nu}}_q)$$
and  that
$$ \Lambda_q(\ln) > \langle \boldsymbol{\rho}, \ln \rangle - \Lambda^*_q(\boldsymbol{\rho}) \qquad \text{ for all }\boldsymbol{\rho}\in \mathcal P_S, \ \boldsymbol{\rho}\neq \bar{\boldsymbol{\nu}}_q.$$
By the Laplace principle, we deduce from Theorem  \ref{T:LDPBW} and Theorem \ref{T:RGE}(ii) that
for any closed $F\in \mathcal P_S$ avoiding some neighborhood of $\bar{\boldsymbol{\nu}}_q$,
$$ \limsup_{n\to \infty} \frac{1}{n} \log
\E_q\left(  \exp(n  \langle L_n, \ln \rangle \indset{L_n\in F}\right) \leq \max_{\boldsymbol{\rho}\in F}\left( \langle \boldsymbol{\rho}, \ln \rangle - \Lambda^*_q(\boldsymbol{\rho})
\right) < \Lambda_q(\ln).
$$
Recalling \eqref{E:BM1f}, this established the exponential concentration.
\end{proof}

The argument for (ii) is similar, and actually a bit simpler.

\begin{proof}[Proof of (ii)]  Assume $\langle \boldsymbol{\rho}, \ln \rangle < \Lambda_q^*(\boldsymbol{\rho}) $;
by the lower semi-continuity of $\Lambda^*_q$, we can find a closed neighborhood $F$ of $\boldsymbol{\rho}$ such that
$$\sup_{ \boldsymbol{\rho}'\in F} (\langle \boldsymbol{\rho}', \ln \rangle - \Lambda_q^*(\boldsymbol{\rho}') )=-2c <0.$$
Then, just  as in (i), we have for all $n$ sufficiently large
$$\E_q(\#\{v\in T: |v|=n \ \&\  \boldsymbol{\mu}_v\in F\}) \leq \e^{-cn},$$
and a summation over all generations enables us to conclude that
$$\E_q(\#\{v\in T:  \boldsymbol{\mu}_v\in F\}) <\infty.$$
In particular, $\boldsymbol{\rho}$ is evanescent $\P_q$-a.s.
\end{proof}

 The proof of (iii) relies on the next technical estimate, which can be seen as an extension of results obtained in \cite[Section 5]{BM2}.
 \begin{lemma}\label{L:tech} Consider a function $a: S\to [0, 1/q)$ such that
 $$\sum \frac{ \nu(j)}{1   - qa(j)} = 1/(1-q),$$
 and then define $\boldsymbol{\pi}_a\in \mathcal P_S$ by
$$\pi_a (k) \coloneqq \frac{(1-q) a(k)}{1   - qa(k)} \nu(k),\qquad k\in S.$$
If
$$\sum \pi_a(k) \log \left(\frac{a(k)}{k}\right)<0, $$
then the probability under $\P_q$ that $\boldsymbol{\pi}_a$ is strongly persistent is strictly positive.
 \end{lemma}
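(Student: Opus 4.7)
The plan is to extend the spinal decomposition argument of Lemma \ref{L:spinal} and Theorem \ref{T0}(iii) to the reinforced setting (in the spirit of \cite{BM2}), via a martingale change of measure together with a Kesten--Stigum-type criterion. The main novelty with respect to the classical case is that, because of the reinforcement, the one-step law of the out-degree along the spine depends on the current empirical ancestral distribution, so the spine behaves as a stochastic approximation rather than as an i.i.d.\ sequence.

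First, with the convention $\boldsymbol{\mu}_{v_0} = \boldsymbol{\nu}$ and the shorthand $m'_a(\boldsymbol{\mu}) \coloneqq \sum_k a(k)[q\mu(k) + (1-q)\nu(k)]$, I would introduce the candidate martingale
\[
W_n^a \coloneqq \sum_{|v|=n} \prod_{j=0}^{n-1} \frac{a(d(v_j))}{d(v_j)\, m'_a(\boldsymbol{\mu}_{v_j})}.
\]
Conditioning generation by generation and using that $d(v) \sim q\boldsymbol{\mu}_v + (1-q)\boldsymbol{\nu}$, one checks directly that the total weight contributed by the $d(v)$ children of any $v$ at generation $n$ equals $a(d(v))/m'_a(\boldsymbol{\mu}_v)$, which itself has conditional mean $1$ once integrated over $d(v)$; hence $(W_n^a)$ is a non-negative $\P_q$-martingale of unit expectation. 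Tilting by $W_n^a$ yields a probability measure $\P_q^a$ under which a distinguished spine $(V_n^a)$, selected proportionally to vertex weights, is well-defined, and a computation analogous to Lemma \ref{L:spinal}(i) shows that under $\P_q^a$, conditionally on the past,
\[
\P_q^a(d(V_n^a) = k \mid \cdots) = \frac{a(k)[q\mu_{V_n^a}(k) + (1-q)\nu(k)]}{m'_a(\boldsymbol{\mu}_{V_n^a})}.
\]

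The crucial step is to show that $\boldsymbol{\mu}_{V_n^a} \to \boldsymbol{\pi}_a$, $\P_q^a$-almost surely. The process $(\boldsymbol{\mu}_{V_n^a})$ obeys a Robbins--Monro-type recursion whose mean drift sends $\boldsymbol{\mu}$ to the conditional distribution displayed above, and the two algebraic identities
\[
q\pi_a(k) + (1-q)\nu(k) = \frac{(1-q)\nu(k)}{1-qa(k)}, \qquad \sum_k \frac{\pi_a(k)}{a(k)} = (1-q)\sum_k \frac{\nu(k)}{1-qa(k)} = 1,
\]
the second of which uses the normalization hypothesis of the lemma, jointly yield $m'_a(\boldsymbol{\pi}_a) = 1$ and identify $\boldsymbol{\pi}_a$ as a fixed point of this transition. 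The a.s.\ convergence then follows from classical results on generalized urn schemes (see \cite{AK, Jan04}). Combined with the strong law along the spine, this gives
\[
\frac{1}{n} \sum_{j=0}^{n-1} \log \frac{a(d(V_j^a))}{d(V_j^a)\, m'_a(\boldsymbol{\mu}_{V_j^a})} \longrightarrow \sum_k \pi_a(k) \log \frac{a(k)}{k} - \log m'_a(\boldsymbol{\pi}_a) = \sum_k \pi_a(k) \log \frac{a(k)}{k},
\]
which is strictly negative by hypothesis, so that the single spine weight tends to $0$ $\P_q^a$-almost surely.

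Durrett's criterion, exactly as in the proof of Lemma \ref{L:spinal}(ii), then implies that $(W_n^a)$ is uniformly integrable under $\P_q$, hence $\P_q^a$ is absolutely continuous with respect to $\P_q$ on the terminal $\sigma$-field. Since $(V_n^a)$ is an infinite line of descent along which $\boldsymbol{\mu}_{V_n^a} \to \boldsymbol{\pi}_a$ $\P_q^a$-a.s., we conclude that $\boldsymbol{\pi}_a$ is strongly persistent with strictly positive $\P_q$-probability. The main obstacle I expect lies in the rigorous justification of this almost-sure convergence: the spine dynamics is not a classical P\'olya urn because of the non-affine dependence on $\boldsymbol{\mu}$ through the normalization $m'_a(\boldsymbol{\mu})$, so the stability hypotheses (uniqueness and attractiveness of $\boldsymbol{\pi}_a$, together with a suitable Lyapunov function) needed to invoke a general stochastic approximation theorem must be checked carefully.
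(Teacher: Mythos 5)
Your overall architecture is the right one and matches the paper's: introduce the normalization $m_a(\boldsymbol{\mu})=\sum_k a(k)[q\mu(k)+(1-q)\nu(k)]$, form the additive martingale $M^a_n=\sum_{|v|=n}\prod_{j<n}\frac{a(d(v_j))/d(v_j)}{m_a(\boldsymbol{\mu}_{v_j})}$, tilt to $\P^a_q$ with a spine $(V^a_n)$, show the empirical offspring distribution along the spine tends to $\boldsymbol{\pi}_a$, and then use Durrett's criterion (as in Lemma~\ref{L:spinal}(ii)) to deduce uniform integrability and hence absolute continuity of $\P^a_q$ with respect to $\P_q$ on the terminal $\sigma$-field. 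The algebraic identities you check (in particular $m_a(\boldsymbol{\pi}_a)=1$ and the fixed-point property of $\boldsymbol{\pi}_a$) are also correct.

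However, the step you yourself flag as the main obstacle is, in fact, the real content of the proof, and as written you do not close the gap. You correctly observe that the one-step law of $d(V^a_n)$ is
$\frac{a(k)[q\mu_{V^a_n}(k)+(1-q)\nu(k)]}{m_a(\boldsymbol{\mu}_{V^a_n})}$, a \emph{non-affine} function of the current empirical measure because of the denominator, so that the sequence $(d(V^a_n))$ is not a standard P\'olya urn on the color set $S$; you then gesture towards a Robbins--Monro / stochastic approximation argument that would need an ad hoc Lyapunov function and uniqueness/attractiveness of the fixed point, none of which you verify. Citing Athreya--Karlin or Janson at that point is circular, since those results apply to urns with a \emph{fixed} replacement scheme, which is precisely what you do not have.

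The paper resolves this with a specific device that is worth internalizing: it enlarges the color space to $S\cup\{\star\}$, assigning to a ball of color $k\in S$ the activity $qa(k)$ and to a $\star$-ball the activity $(1-q)\sum_j a(j)\nu(j)$, with the fixed replacement rule ``if color $j\in S$ is drawn, add one $j$-ball and one $\star$-ball; if $\star$ is drawn, add one $\star$-ball and one ball of color $k$ with probability proportional to $a(k)\nu(k)$.'' In this enlarged scheme the replacement matrix $A$ is constant (it does not depend on the current composition), so the classical generalized P\'olya urn theory of \cite{AK,Jan04} applies directly; the normalization $m_a(\boldsymbol{\mu})$ reappears automatically through the total activity, because the number of $\star$-balls grows in lockstep with the number of non-$\star$ balls. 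One then only needs to verify that the leading eigenvalue of the re-weighted mean replacement matrix is $1$, with left-eigenvector proportional to $\boldsymbol{\pi}_a$ on $S$, to deduce $\boldsymbol{\mu}_{V^a_n}\to\boldsymbol{\pi}_a$ $\P^a_q$-a.s.\ by spectral analysis, rather than by a bespoke stochastic approximation argument. Everything downstream of this point in your proposal is then fine.
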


 Taking Lemma \ref{L:tech} for granted, we can easily establish the last part of Theorem \ref{Tref}.
 \begin{proof}[Proof of (iii)]  Let ${\boldsymbol{\rho}}\in \mathcal P_S$;
 define
 $$a(k)\coloneqq \frac{\rho(k)}{q \rho(k)+ (1-q)\nu(k)}, \qquad k\in S.$$
 So
 $$H(\boldsymbol{\rho} | q \boldsymbol{\rho} + (1-q) \boldsymbol{\nu} )= \sum \rho(k) \log a(k)
 $$
 and  we have also  $\boldsymbol{\rho}= \boldsymbol{\pi}_a$.
 Therefore the condition of the statement
$$\langle \boldsymbol{\rho}, \ln \rangle >  H(\boldsymbol{\rho} | q \boldsymbol{\rho} + (1-q) \boldsymbol{\nu} )$$
reads
$$\sum \pi_a(k) \log \left(\frac{a(k)}{k}\right) <0.$$
This enables us to invoke Lemma \ref{L:tech}.
 \end{proof}
  The proof of Lemma \ref{L:tech} below may look indirect, however the guiding line is close to that of Theorem \ref{T0}(iii) in the case $q=0$ without reinforcement.
 In short,  under a new probability measure $\P_q^a$, we will construct a spine such that the empirical offspring distribution along the spine approaches $\boldsymbol{\pi}_a$  as the generation goes to infinity. Then, we will have to check that the distribution of the tree under $\P_q^a$ is absolutely continuous with respect to the initial law $\P_q$. This is the role of the inequality in the statement.
This argument is similar to that used in  \cite[Section 5]{BM2}. To minimize the overlap with our previous work, we shall just sketch here the proof of Lemma \ref{L:tech}, focussing on changes needed in key calculations. Of course, reader wishing to follow details may want to have \cite{BM2} at hand as well.

\begin{proof}[Sketch of the proof of Lemma \ref{L:tech}] Let $a$ be as in the statement.
We first introduce
$$m_a(\boldsymbol{\rho}) \coloneqq \sum_{k\in S} a(k) (q\rho(k)+(1-q)\nu(k)), \quad \boldsymbol{\rho}\in \mathcal P_S.$$
This quantity should be interpreted as the conditional mean value of $a(d(v))$  for the reinforced Galton--Watson process, given that $v$ is a vertex with empirical offspring distribution $\boldsymbol{\mu}_v=\boldsymbol{\rho}$.
We further define, for any vertex $v$ of the tree aside the root,
\begin{equation} \label{E:Phi}
\Phi^a({v}) \coloneqq \prod_{j=0}^{|{v}|-1} \frac{a(d(v_j))/d(v_j)}{m_a(\boldsymbol{\mu}_{v_j})},
\end{equation}
where, as previously, $v_j$ denotes the forebear of $v$  at generation $j$, and we agree for definitiveness that for $j=0$, $m_a(\boldsymbol{\mu}_{\varnothing})=\sum  a(k) \nu(k)$.
It is immediate  that the process
\begin{equation}\label{E:martingale}
  M^{a}_n = \sum_{ |{v}|=n} \Phi^a(v), \qquad n\geq 1
\end{equation}
is a nonnegative martingale with unit mean under $\P_q$.

Much as for \eqref{E:defspin}, this martingale serves to define a probability measure $\P_q^a$ for the joint law of a random tree $T$ together with a spine $(V^a_n)_{n\geq 0}$. Specifically, for any $n\geq 1$, any subtree $t_n$ of the Ulam tree with height $n$ and any $v$ vertex of $t_n$ at height $n$, if we write $T_n$ for the subtree obtained by pruning $T$ at generation $n$, then we set
\begin{equation} \label{E:defspinq}
\P^a_q\left(T_n= t_n, V^a_n=v\right)\coloneqq \P_q(T_n=t_n)\Phi^a(v).
\end{equation}
The next key step of the proof lies in the observation that the evolution of the degrees $(d(V^a_n))$ along the spine can be described as a generalized P\'olya urn, whose scheme however differs from that in Section \ref{S:Sanovreinforced}. See \cite[Section 6.2]{BM2}.

Specifically, think again of $S$ as a set of colors; we also
add a special color denoted by $\star$. We define an urn process with balls having colors in $S \cup \{\star\}$ as follows. Let the urn contain two balls at the initial  time $n=0$, one with a random color sampled according to $\boldsymbol{\nu}$ and one with color $\star$.
 Imagine that a ball with color $k\in S$ has activity $qa(k)$, meaning that the probability that it is picked at some random drawing from the urn is proportional to $q a(k)$, whereas a ball with color $\star$ has activity $(1-q) \sum a(j) \nu(j)$.
At each step, a ball is drawn at random in the urn with probability proportional to its activity, its color is observed and the ball is then replaced in the urn. If the color of the sampled ball is  $j \in S$, then we add to the urn one ball with color $j$ and one ball with color $\star$. If the sampled ball has color $\star$, then we add to the urn one ball with color $\star$ and a second ball with random color in $S$, such that  probability of adding the color $k$ is proportional to $a(k)\nu(k)$.
Then the sequence of the successive colors of the non-$\star$ balls added to the urn step after step, has  the same law as the sequence $(d(V^a_n))$ of out-degrees under $\P_q^a$.

The upshot of the description above is that it enables us to determine the asymptotic behavior of $(d(V^a_n))$  using classical results on generalized P\'olya urns due to Athreya and Karlin \cite{AK}, see also Janson \cite{Jan04}. This requires the computation of  the principal spectral elements of  the mean replacement matrix of the urn re-weighted by activities. Namely, introduce $A=(A_{i,j})_{i,j\in S \cup \{\star\}}$ defined for $i,j\in  S$  by
$$
  A_{i,j} = q  a(i)\boldsymbol{\delta}_i(j), \quad A_{i,\star} = q a(i), \quad
   A_{\star,j} = (1-q) a(j)\nu(j), $$
   and finally,
   $$A_{\star,\star}= (1-q)\sum a(k)\nu(k).$$
 One readily finds from our assumption (see \cite[Lemma 5.5]{BM2} for similar calculations) that the leading
 eigenvalue of $A$ is  $1$ and the left-eigenvector can then be chosen to coincide  with $\boldsymbol{\pi}_a$ on $S$.

The first consequence of the above spectral analysis for the P\'olya urn is that the empirical measure of out-degrees along the spine converges $\P^a_q$-a.s. to $\boldsymbol{\pi}_a$
as the generation goes to infinity. On the other hand, one immediately verifies that $m_a(\boldsymbol{\pi}_a)=1$. Thus
$$\lim_{n\to \infty} \frac{1}{n}\log \Phi^a ( V^a_n) = \sum \pi_a(k) \log \left( \frac{a(k)}{k}\right)< 0, \qquad \P^a_q\text{-a.s.},$$
and therefore the series
$\sum_{n=1}^{\infty} \Phi^a ( V^a_n)$ converges $\P^a_q$-a.s. Using the same argument as in the proof of Lemma~\ref{L:spinal}(ii), we conclude that
the martingale $M^{a}_n$ is uniformly integrable under $\P_q$, and it follows that $\boldsymbol{\pi}_a$ is strongly persistent with a strictly positive probability under $\P_q$. \end{proof}

{As an immediate application, we point out that any law on $S$ with no atom at $0$ can be made strongly persistent provided that the reinforcement is sufficiently strong.}

{
\begin{corollary}\label{C:eva-pers}
Let any ${\boldsymbol{\rho}}\in \mathcal P_S$ with  $\rho(0)=0$ and $\rho(1)<1$. We may choose the memory parameter sufficiently large such that
$$\langle \boldsymbol{\rho}, \ln \rangle > (1-q) H(\boldsymbol{\rho} |  \boldsymbol{\nu} ),$$
and then  the probability under $\P_q$ that $\boldsymbol{\rho}$ is strongly persistent is strictly positive.
In particular, if
$$\frac{ \langle \boldsymbol{\rho}, \ln \rangle} {H(\boldsymbol{\rho} |  \boldsymbol{\nu}) } \in (1-q,1),$$
then $\boldsymbol{\rho}$ is evanescent $\P_0$-a.s. and strongly persistent with strictly positive $\P_q$-probability.
\end{corollary}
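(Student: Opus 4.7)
The plan is to reduce this to Theorem~\ref{Tref}(iii) via the elementary inequality
\[
  H(\boldsymbol{\rho} \mid q\boldsymbol{\rho} + (1-q)\boldsymbol{\nu}) \leq (1-q)\, H(\boldsymbol{\rho} \mid \boldsymbol{\nu}),
\]
which I would establish by concavity of the logarithm. Writing the relative entropy termwise as $-\sum \rho(k)\log\!\bigl(q + (1-q)\nu(k)/\rho(k)\bigr)$ and applying Jensen to each term (with the convex combination of $1$ and $\nu(k)/\rho(k)$ weighted by $q$ and $1-q$) gives $\log(q + (1-q)\nu(k)/\rho(k)) \geq (1-q)\log(\nu(k)/\rho(k))$, from which the claimed inequality follows after summation. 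Note that finiteness of $H(\boldsymbol{\rho}\mid\boldsymbol{\nu})$ is automatic here since the support of $\boldsymbol{\rho}$ is contained in $S$.

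Once this is in hand, the hypothesis $\langle \boldsymbol{\rho}, \ln \rangle > (1-q)H(\boldsymbol{\rho}\mid\boldsymbol{\nu})$ immediately implies $\langle \boldsymbol{\rho}, \ln \rangle > H(\boldsymbol{\rho} \mid q\boldsymbol{\rho} + (1-q)\boldsymbol{\nu})$, so Theorem~\ref{Tref}(iii) gives strong persistence with strictly positive $\P_q$-probability. For the existence of an admissible $q$: the assumptions $\rho(0)=0$ and $\rho(1)<1$ guarantee the existence of some $k\geq 2$ with $\rho(k)>0$, hence $\langle \boldsymbol{\rho}, \ln \rangle > 0$; on the other hand $(1-q)H(\boldsymbol{\rho}\mid\boldsymbol{\nu}) \to 0$ as $q \uparrow 1$, so any $q$ sufficiently close to $1$ works.

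For the final ``in particular'' assertion, the condition $\langle \boldsymbol{\rho}, \ln \rangle/H(\boldsymbol{\rho}\mid\boldsymbol{\nu}) < 1$ rewrites as $\langle \boldsymbol{\rho}, \ln \rangle < H(\boldsymbol{\rho}\mid\boldsymbol{\nu}) = \Lambda_0^*(\boldsymbol{\rho})$ by the variational formula \eqref{E:var}, which by Theorem~\ref{T0}(ii) yields $\P_0$-almost sure evanescence. The other inequality $\langle \boldsymbol{\rho}, \ln \rangle/H(\boldsymbol{\rho}\mid\boldsymbol{\nu}) > 1-q$ is exactly the hypothesis of the first part, giving strong persistence with positive $\P_q$-probability.

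No substantial obstacle arises: the whole content is the Jensen inequality above, and it is precisely this bound that quantifies how the memory ``softens'' the entropy cost compared to the memoryless case. I would not expect the calculation to require more than a few lines.
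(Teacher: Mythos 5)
Correct, and essentially the same as the paper's proof: the paper invokes the convexity of the relative entropy in its second argument to obtain $H(\boldsymbol{\rho}\mid q\boldsymbol{\rho}+(1-q)\boldsymbol{\nu})\leq(1-q)H(\boldsymbol{\rho}\mid\boldsymbol{\nu})$ (since $H(\boldsymbol{\rho}\mid\boldsymbol{\rho})=0$) and then appeals to Theorem~\ref{Tref}(iii) and, for the ``in particular'' part, to Theorem~\ref{T0}(ii) together with \eqref{E:var}, exactly as you do. Your termwise Jensen computation is simply a direct derivation of that convexity fact, so the two arguments coincide in substance.
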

 }

{\begin{proof}  Indeed $\langle \boldsymbol{\rho}, \ln \rangle >0$ and $H(\boldsymbol{\rho} |  \boldsymbol{\nu} )<\infty$, so we may choose $q\in(0,1)$ as in the statement.
It follows from the convexity of the relative entropy that
$$(1-q) H(\boldsymbol{\rho} |  \boldsymbol{\nu} )\geq  H(\boldsymbol{\rho} |  q\boldsymbol{\rho} +(1-q)\boldsymbol{\nu} ),$$
and we conclude the proof of the first claim with an appeal to Theorem \ref{Tref}(iii). The second assertion follows, by Theorem \ref{T0}(ii) and the variational formula \eqref{E:var}.
\end{proof}
}

The (contraposition of the)  final result of this section result provides an elementary necessary condition for a law to be strongly persistent with positive probability.

\begin{proposition}\label{P:nsp} For any  ${\boldsymbol{\rho}}\in \mathcal P_S$ with
$$qm_{\boldsymbol{\rho}}+ (1-q)m_{\boldsymbol{\nu}} < 1,$$
 the probability  under $\P_q$ that ${\boldsymbol{\rho}}$ being strongly persistent is zero.
\end{proposition}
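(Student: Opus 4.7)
The plan is to find a small open neighborhood $G$ of $\boldsymbol{\rho}$ on which the expected number of children of any vertex is bounded above by a quantity strictly less than one, and then to show that the restricted subtree of descendants whose ancestral line stays inside $G$ from some fixed generation onward is almost surely finite. Since the map $\boldsymbol{\mu} \mapsto q m_{\boldsymbol{\mu}} + (1-q) m_{\boldsymbol{\nu}}$ is continuous on $\mathcal{P}_S$ and takes the value $q m_{\boldsymbol{\rho}} + (1-q) m_{\boldsymbol{\nu}} < 1$ at $\boldsymbol{\rho}$, I would first pick $\eta > 0$ and an open neighborhood $G$ of $\boldsymbol{\rho}$ such that $q m_{\boldsymbol{\mu}} + (1-q) m_{\boldsymbol{\nu}} \leq 1 - \eta$ for every $\boldsymbol{\mu} \in G$.

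For each deterministic $N \geq 1$, I would introduce the random sets
\[
\widetilde{T}_n(N) := \{ v \in T : |v| = n \geq N \text{ and } \boldsymbol{\mu}_{v_k} \in G \text{ for every } N \leq k \leq n \}.
\]
Any element of $\widetilde{T}_{n+1}(N)$ is necessarily a child of some element of $\widetilde{T}_n(N)$, so $|\widetilde{T}_{n+1}(N)| \leq \sum_{v \in \widetilde{T}_n(N)} d(v)$. By the reinforced dynamics, conditionally on the tree pruned at generation $n$ the out-degree $d(v)$ is drawn from $q \boldsymbol{\mu}_v + (1-q)\boldsymbol{\nu}$, whose mean equals $q m_{\boldsymbol{\mu}_v} + (1-q) m_{\boldsymbol{\nu}} \leq 1-\eta$ whenever $v \in \widetilde{T}_n(N)$. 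Consequently $(1-\eta)^{-(n-N)} |\widetilde{T}_n(N)|$, $n \geq N$, is a non-negative $\P_q$-supermartingale, started from the bounded variable $|\widetilde{T}_N(N)| \leq (\max S)^N$.

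The supermartingale convergence theorem then yields $|\widetilde{T}_n(N)| = O((1-\eta)^n)$, $\P_q$-a.s., and since these counts are integer-valued, $\widetilde{T}_n(N) = \emptyset$ for all $n$ sufficiently large. To conclude, were $\boldsymbol{\rho}$ strongly persistent with positive $\P_q$-probability, there would exist, on an event of positive probability, an infinite line of descent $(v_n)_{n \geq 0}$ with $\boldsymbol{\mu}_{v_n} \to \boldsymbol{\rho}$; openness of $G$ then provides a (random) threshold $N_0$ with $\boldsymbol{\mu}_{v_n} \in G$ for all $n \geq N_0$, so the infinite tail $(v_n)_{n \geq N_0}$ sits in $\bigcup_n \widetilde{T}_n(N_0)$, making it infinite. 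A countable union over deterministic values of $N$ rules this out almost surely, proving the claim. The delicate point, which the definition of $\widetilde{T}_n(N)$ is tailored to handle, is that one needs to preserve simultaneously the property $\boldsymbol{\mu}_v \in G$ along the ancestry (so that the bound on the conditional mean propagates to each generation) and the parent--child structure (to close the supermartingale inequality); restricting to descendants whose entire line of ancestors lies in $G$ from generation $N$ onward is exactly what reconciles the two requirements.
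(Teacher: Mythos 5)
Your argument is correct and is in essence the same as the paper's (very terse) proof: both pick a neighborhood $G$ of $\boldsymbol{\rho}$ on which $q m_{\boldsymbol{\mu}} + (1-q) m_{\boldsymbol{\nu}}$ is bounded away from $1$, and both observe that the subtree obtained by pruning whenever the empirical offspring distribution leaves $G$ is subcritical, hence a.s.\ finite, which is incompatible with strong persistence. You merely spell out the "subcriticality implies finiteness" step as a supermartingale argument and handle the random starting generation by a countable union over deterministic $N$, details the paper leaves implicit.
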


\begin{proof} Indeed there is then a neighborhood $G$ of ${\boldsymbol{\rho}}$ such that
\begin{equation} \label{E:subcr}
\sup_{\boldsymbol{\rho}'\in G}\left( qm_{\boldsymbol{\rho}'}+ (1-q)m_{\boldsymbol{\nu}} \right) < 1.
\end{equation}
Under $\P_q$, consider any vertex $v$ in $T$ with  $\boldsymbol{\mu}_v\in G$, and prune its descent as soon as the empirical offspring distribution exits $G$.
By subcriticality,  \eqref{E:subcr} entails that the pruned descent of $v$ is finite, $\P_q$-a.s.
\end{proof}

\begin{remark} Comparing Theorem \ref{Tref}(iii) and Proposition \ref{P:nsp} shows that
$$ \langle \boldsymbol{\rho}, \ln \rangle \leq  H( \boldsymbol{\rho}, q \boldsymbol{\rho}+ (1-q)\boldsymbol{\nu}) \text{ as soon as } q m_{\boldsymbol{\rho}}+ (1-q) m_{\boldsymbol{\nu}}=m_{q \boldsymbol{\rho}+ (1-q)\boldsymbol{\nu}} < 1.$$
Actually, it follows immediately from Gibbs inequality that
$$ m_{q \boldsymbol{\rho}+ (1-q)\boldsymbol{\nu}} \leq 1 \Longrightarrow  \langle \boldsymbol{\rho}, \ln \rangle \leq  H( \boldsymbol{\rho}, q \boldsymbol{\rho}+ (1-q)\boldsymbol{\nu}).$$

\end{remark}

\section{A sufficient condition for survival} \label{S:survival}

We now conclude this work by pointing out that Lemma \ref{L:tech} also yields the following  sufficient condition for the survival of reinforced Galton-Watson processes.
Introduce the space of functions on $S$
 $$\mathcal A\coloneqq \left\{ a: S\to [0,1/q) :  \sum \frac{\nu(j)}{1   - qa(j)} = \frac{1}{1-q}\right\},$$
 and the function $J:\mathcal A\to (-\infty, \infty]$ given by
$$ J(a)= \sum \nu(k) \frac{(1-q) a(k)}{1   - qa(k)} \log \left(\frac{a(k)}{k}\right).$$
We also recall that the principal branch $W_0$ of the Lambert  $W$ function is defined for every $x\geq -1/\e$ as the unique solution $y\geq -1$ to the equation $y\e^{y}=x$.
{ \begin{corollary} \label{C1bis}  The reinforced Galton--Watson process survives for ever with a strictly positive $\P_q$-probability whenever
$$\min_{\mathcal A} J<0.$$
More precisely, $J$ always
reaches its minimum at a unique location $\underline a=\arg \min J$, which  is determined as the unique $a\in \mathcal A$
such that there exists some $C>0$ with
$$a(j)= -q^{-1} W_0(-C j),\qquad j\in S.$$
 \end{corollary}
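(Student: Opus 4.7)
My plan is to handle the two assertions separately. The survival statement is immediate from Lemma~\ref{L:tech}: by definition $J(a)=\sum \pi_a(k)\log(a(k)/k)$, so $\min_{\mathcal A} J<0$ furnishes some $a\in\mathcal A$ with $\sum \pi_a(k)\log(a(k)/k)<0$, whence $\boldsymbol{\pi}_a$ is strongly persistent with strictly positive $\P_q$-probability and, in particular, the reinforced Galton--Watson process survives with strictly positive probability.

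For the structural part I would view $\min_{\mathcal A} J$ as a constrained optimization problem with equality constraint $g(a):=\sum_j \nu(j)/(1-qa(j))=1/(1-q)$ and derive its Euler--Lagrange equation. First, $\mathcal A$ is compact: each single term of $g$ is at most $1/(1-q)$, which bounds every $a(k)$ uniformly away from $1/q$; and $J$ extends continuously to the boundary $\{a(k)=0\}$ via $0\log 0=0$, so the infimum is attained. Moreover, a short expansion gives that the $k$-th term of $J$ behaves like $(1-q)\nu(k)\,a(k)\log a(k)$ for small $a(k)>0$, which is strictly negative and dominates any $O(a(k))$ linear adjustment needed to maintain $g(a)=1/(1-q)$; hence any minimizer lies in the interior of $\mathcal A$ with $a(k)>0$ for every $k\in S$.

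Next I would write the Lagrange equation $\nabla J=\mu\,\nabla g$. Routine differentiation yields
\begin{align*}
\partial_{a(k)}J &= \nu(k)\left[\frac{1-q}{(1-qa(k))^2}\log(a(k)/k)+\frac{1-q}{1-qa(k)}\right], \\
\partial_{a(k)}g &= \frac{\nu(k)\,q}{(1-qa(k))^2},
\end{align*}
and cancelling the common factor $\nu(k)(1-qa(k))^{-2}$ leaves, for every $k\in S$,
\begin{equation*}
\log(a(k)/k)-qa(k)=\log(C/q),
\end{equation*}
where $C>0$ is a single constant absorbing the multiplier. Rewriting this as $(-qa(k))\,e^{-qa(k)}=-Ck$ and using that $a(k)\in(0,1/q)$ forces $-qa(k)\in(-1,0)$, I identify the principal branch of the Lambert function and obtain $a(k)=-q^{-1}W_0(-Ck)$.

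Finally, $C$ is pinned down by substituting back into $g(a)=1/(1-q)$: the map $C\mapsto \sum_k \nu(k)\,(1+W_0(-Ck))^{-1}$ is continuous and strictly increasing on $(0,1/(e\max S))$, equals $1$ at $C=0$ and diverges as $C\max S\uparrow 1/e$, so it crosses the value $1/(1-q)>1$ at exactly one $C$. Uniqueness of $C$ forces uniqueness of the Lagrange critical point, and therefore of the minimizer. The substantive step is the algebraic reduction of the Lagrange equation to Lambert-$W$ form; the compactness, interior-minimum, and monotonicity arguments are routine once the setup is in place.
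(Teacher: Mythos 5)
Your proof follows exactly the same two-step route as the paper: apply Lemma~\ref{L:tech} via the identity $J(a)=\sum \pi_a(k)\log(a(k)/k)$ to get survival, then solve the constrained minimization $\min_{\mathcal A}J$ by Lagrange multipliers, reduce the Euler--Lagrange equation to $(-qa(k))e^{-qa(k)}=-Ck$, select the $W_0$ branch from the constraint $a(k)\in[0,1/q)$, and pin down $C$ by monotonicity. Your added monotonicity argument for $C\mapsto \sum_k\nu(k)(1+W_0(-Ck))^{-1}$ is a slightly more explicit rendering of what the paper leaves to ``$W_0$ is monotone increasing,'' and your computation of $\partial_{a(k)}J$ matches the paper's.

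One small slip, though not fatal: you assert that the minimizer has $a(k)>0$ for \emph{every} $k\in S$. This is false when $0\in S$, since then $J(a)=\infty$ whenever $a(0)>0$ (because $\log(a(0)/0)=+\infty$), so a minimizer must have $\underline a(0)=0$; moreover your small-$a(k)$ expansion $(1-q)\nu(k)\,a(k)\log a(k)$ of the $k$-th term of $J$ is only valid for $k\geq 1$, and the Lagrange identity $\log(a(k)/k)-qa(k)=\log(C/q)$ makes no sense at $k=0$. The paper is careful here: it restricts the interior-minimum argument and the Lagrange equation to $k\in S\setminus\{0\}$, and then notes separately that the formula $a(j)=-q^{-1}W_0(-Cj)$ extends to $j=0$ trivially (giving $a(0)=0$). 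Your argument should be amended to the same effect; once that is done, the proof is complete and equivalent to the paper's.
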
}
This is improves upon \cite[Theorem 1.3]{BM2}, in which the sufficient condition for survival is given by the stronger condition $J(b)<0$, where $b\in \mathcal A$ is proportional to the identity, i.e. $b(k)=ck$ for a suitable $c>0$. { In the converse direction, we  believe that the reinforced Galton--Watson process should become eventually extinct a.s. when
$\min_{\mathcal A} J = J(\underline a) >0$.}
\begin{proof} First, recall the notation of  Lemma~\ref{L:tech} and note that
$$J(a) = \sum \pi_a(k) \log \left(\frac{a(k)}{k}\right).$$
So if we assume that $J(\underline a)<0$, then the requirement of Lemma~\ref{L:tech} holds for
 $\boldsymbol{\pi}=\boldsymbol{\pi}_{\underline a}$, and the latter is strongly persistent with a strictly positive $\P_q$-probability. \textit{A fortiori } the reinforced Galton--Watson process survives with a strictly positive probability.

The proof of the second claim uses the method of Lagrange multipliers.
To start with, note that if $0\in S$ and $a\in \mathcal A$ with $a(0)>0$, then $J(a)=\infty$. An argument of compactness and continuity shows that
there exists some $\underline a\in \mathcal{A}$ with $\underline a(0)=0$ whenever $0\in S$, such that $J(\underline a) = \inf J=\min J$.
Next, the function $x\mapsto x\log x$ has a finite derivative at any $x>0$, whereas its right-derivative at $x=0$ is $-\infty$. It follows that if $a\in \mathcal A$ has $a(k)=0$ for some $k\neq 0$,
then $a$ is not a local minimum of $J$; hence $\underline a(k)>0$ for all $k\in S\backslash \{0\}$.

The function $J$ is $\mathcal{C}^1$  on $\{a: S\backslash \{0\} \to (0,1/q)\}$, with a partial derivative  in the $k$-th coordinate
 given by
\[
   \partial_kJ(a) = \frac{(1-q) \nu(k)}{1-q a(k)}\left( 1 + \frac{1}{1-qa(k)} \log\left( \frac{a(k)}{k}\right) \right) .\]
So, if we introduce the constraint function
 $$g(a)=\sum \frac{\nu(j)}{1   - qa(j)},$$
 which has
 $\partial _kg(a)=q\nu(k)/(1-qa(k))^2$,
 then $\mathcal A=\{a: g(a)=1/(1-q)\}$ and the criticality of  $\underline a$
 forces the vectors $\partial_kJ(\underline a)$ and $ \partial _kg(\underline a) $ to be proportional. This entails
the existence of some constant $c\in \R$ such that
$$ \log (\underline {a}(k)/k) + 1 - q \underline{a}(k)= c \qquad \text{for all }k\in S\backslash \{0\}.$$
We rewrite the equation above as
$$ - q \underline a(k) \e^{-q \underline a(k)}=-kC,$$
with $C=\e^c>0$. Thus
$\underline{a}(k) = -q^{-1} W(-C k)$, with $W$ being one of the two branches of Lambert's $W$ function. But since $W_{-1}(-Ck)\leq -1$, only $\underline{a}(j) = -q^{-1} W_0(-C j)$
fits the requirement $qa(k)<1$. Since $W_0$ is monotone increasing, the constraint $g(\underline a)=1/(1-q)$ determines $C$, and we conclude that $J$ has indeed a unique minimum at the location given  in the statement.
 \end{proof}

\paragraph{Funding.}
The first author was partially supported by the project SNSF 200020 212115: Random processes with genealogy dependent dynamics.
The second author was partially supported by the MITI interdisciplinary program 80PRIME GEx-MBB and the ANR MBAP-P (ANR-24-CE40-1833) project.

\bibliographystyle{plain}

\end{document}